\crefname{lemma}{Lemma}{Lemmas}
\crefname{fact}{Fact}{Facts}
\crefname{theorem}{Theorem}{Theorems}
\crefname{corollary}{Corollary}{Corollaries}
\crefname{claim}{Claim}{Claims}
\crefname{example}{Example}{Examples}
\crefname{problem}{Problem}{Problems}
\crefname{setting}{Setting}{Settings}
\crefname{definition}{Definition}{Definitions}
\crefname{assumption}{Assumption}{Assumptions}
\crefname{subsection}{Subsection}{Subsections}
\crefname{section}{Section}{Sections}
\newtheorem{definition}{Definition}
\newtheorem{assumption}{Assumption}
\newtheorem{lemma}{Lemma}
\newtheorem{theorem}{Theorem}
\newtheorem{corollary}{Corollary}
\newtheorem{claim}{Claim}
\newtheorem{fact}{Fact}
\newtheorem{inftheorem}{Informal Theorem}
\newtheorem{infassumption}{Informal Assumption}
\newcommand{\lprp}[1]{\ensuremath{\left({#1}\right)}}
\newcommand{\lbrb}[1]{\ensuremath{\left\{{#1}\right\}}}
\newcommand{\lsrs}[1]{\ensuremath{\left[{#1}\right]}}
\newcommand{\slice}{C}
\renewcommand{\vec}[1]{\bm{#1}}
\newcommand{\lr}[1]{\ensuremath{\left({#1}\right)}}
\newcommand{\sqlr}[1]{\ensuremath{\left[ {#1} \right] }}
\newcommand{\blr}[1]{\ensuremath{\left\{ {#1} \right\} }}
\newcommand{\ddt}[2]{\ensuremath{\frac{\partial^2}{\partial{#1} \partial{#2}}}}
\newcommand{\E}{\mathbb{E}}
\renewcommand{\P}{\mathbb{P}}
\renewcommand{\vec}[1]{\boldsymbol{#1}}
\newcommand{\eps}{\varepsilon}
\newcommand{\R}{\mathbb{R}}
\newcommand{\poly}{\mathrm{poly}}
\DeclareMathOperator*{\argmax}{arg\,max}
\DeclareMathOperator*{\argmin}{arg\,min}
\DeclarePairedDelimiterX{\abs}[1]{\lvert}{\rvert}{#1}
\newcommand{\mc}[1]{\mathcal{#1}}
\newcommand{\mb}[1]{\mathbb{#1}}
\newcommand{\proj}[1]{\mathcal{P}_{#1}}
\newcommand{\pproj}[1]{\mathcal{P}^\perp_{#1}}
\newcommand{\wt}[1]{\widetilde{#1}}
\newcommand{\norm}[1]{\left\|#1\right\|}
\definecolor{niceRed}{RGB}{190,38,38}
\definecolor{blueGrotto}{HTML}{059DC0}
\definecolor{royalBlue}{HTML}{057DCD}
\definecolor{navyBlueP}{HTML}{0B579C}
\definecolor{limeGreen}{HTML}{81B622}
\DeclareMathOperator*{\Exp}{{\mathbb{E}}}
\title{What Makes A Good Fisherman? \\ Linear Regression under Self-Selection Bias}
\author{
\begin{tabular}{cc}
& \\
\textbf{Yeshwanth Cherapanamjeri} & \textbf{Constantinos Daskalakis}\\
\small{University of California Berkeley} & \small{Massachusetts Institute of Technology}\\
\small{\texttt{yeshwanth@berkeley.edu }} & \small{\texttt{costis@csail.mit.edu}}\\
& \\
\textbf{Andrew Ilyas} & \textbf{Manolis Zampetakis}\\
\small{Massachusetts Institute of Technology} & \small{University of California Berkeley}\\
\small{\texttt{ailyas@mit.edu}} & \small{\texttt{mzampet@berkeley.edu}}\\
& \\
\end{tabular}
}
\date{}
\begin{document}
    \maketitle

    \begin{abstract}
    In the classical setting of self-selection, the goal is to learn $k$ models,
    simultaneously from observations $(\vec{x}^{(i)}, y^{(i)})$ where $y^{(i)}$ is the
    output of one of $k$ underlying models on input $\vec{x}^{(i)}$. In contrast to
    mixture models, where we observe the output of a randomly selected model, and therefore 
    the selection of which model is observed is {\em exogenous}, in self-selection models
    which model is observed depends on the realized outputs of the underlying models themselves, as determined by some
    known selection criterion (e.g.~we might observe the highest output, the
    smallest output, or the median output of the $k$ models), and is thus {\em endogenous}. In known-index
    self-selection, the identity of the observed model output is observable; in
    unknown-index self-selection, it is not. Self-selection has a long history in
    Econometrics (going back to the works of \citet{roy1951some},
    \citet{gronau1974wage}, \citet{lewis1974comments},
    \citet{heckman1974shadow} and others) and many applications in various theoretical and applied fields,
    including treatment effect estimation, imitation learning, learning from
    strategically reported data, and learning from markets at disequilibrium.  
            
    In this work, we present the first computationally and statistically efficient
    estimation algorithms for the most standard setting of this problem where the
    models are linear. In the known-index case, we require $\poly(1/\eps, k,
    d)$ sample and time complexity to estimate all model parameters to accuracy
    $\eps$ in $d$ dimensions, and can accommodate quite general selection
    criteria. In the more challenging unknown-index case, even the identifiability
    of the linear models (from infinitely many samples) was not known. We show three
    results in this case for the commonly studied $\max$ self-selection criterion:
    (1) we show that the linear models are indeed identifiable, (2) for general $k$
    we provide an algorithm with $\poly(d)\cdot \exp(\poly(k))$ sample and time
    complexity to estimate the regression parameters up to error $1/\poly(k)$,
    and (3) for $k = 2$ we provide an algorithm for any error $\eps$ and
    $\poly(d, 1/\eps)$ sample and time complexity.

        \end{abstract}
    \thispagestyle{empty}

\clearpage
\setcounter{page}{1}
    \section{Introduction}
    \label{sec:intro}
    To introduce our problem we present the following story adapted from the seminal work of~\citet{roy1951some}. In a small village, two mutually exclusive occupations are available:
hunting and fishing. An analyst visits the village with a simple question:
\begin{center}
    {\em What makes a good fisher and what makes a good hunter?}
\end{center}
More precisely, the analyst wishes to construct a
statistical model mapping villagers' features (e.g., their height and weight) to
their proficiency at hunting and fishing (as measured, e.g., by their income). 
To accomplish this, the analyst might collect a random sample of hunters and 
fishers from the village, record their relevant features as well as their
income, and then use this data to estimate the parameters of two linear
models---one for each occupation.\footnote{For the purposes of this discussion, we assume an abundance of game and fish, and that everything is exported at fixed prices so that the income from each occupation is not affected by how many villagers exercise each occupation.} For this purpose, it is natural for the analyst to use the OLS estimator on all the hunter data to estimate the hunter model, and the OLS estimator on all the fisher data to estimate the fisher model.

It turns out, however, that even 
with a perfectly
representative sample of villagers, 
the resulting linear fits will likely be {\em biased}. 
Indeed, if 
the villagers are 
rational agents, 
they will 
choose their occupations based on which one  generates more income for them:
those who are better at hunting than fishing (in terms of earnings) will opt
to hunt, and vice-versa. As a result, the analyst will
never observe, e.g.,~the hunting earnings of an individual who is better at fishing than hunting, since that individual will choose to fish.
This induces bias in the observed hunting and fishing datasets which makes 
the outputs of naive estimators on these datasets biased as well: Figure~\ref{fig:headline_fig} illustrates this
effect in one dimension.
In fact, this bias arises even in the simpler case where earnings
from both occupations
are normally distributed and fully independent of an individual's
features and of one another, as discussed in the  work of~\citet{roy1951some}.\footnote{
In particular, suppose that 
hunting earnings were low-mean and low-variance, 
while fishing earnings were higher-mean but higher-variance. 
In this case, if we use only fishing data to estimate villagers' expected fishing
earnings, we would get an over-estimate, as bad fishers with low earnings
would almost certainly turn to hunting.}

\begin{figure}[hb]
    \centering
    \includegraphics[width=.75\textwidth]{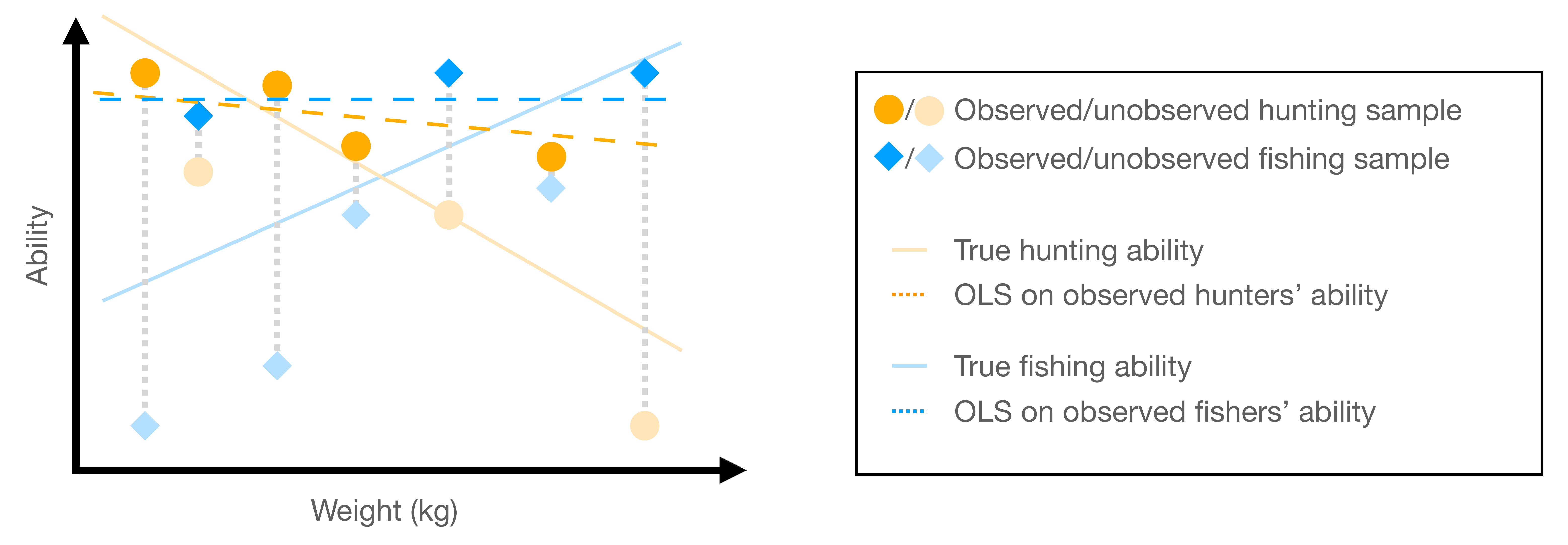}
    \caption{An illustration of self-selection bias in one-dimensional linear regression.
    In a fictional population, the orange points represent individuals' hunting 
    ability, while the corresponding blue points represent the same individuals'
    fishing ability. Each individual in the population {\em self-selects} and 
    only performs the activity at which they are best. As a result, data analysis
    performed based on only observed data (i.e., the bright points in the above 
    graph) leads to biased inference (compare the dotted estimated lines to the 
    solid ground-truth lines).}
    \label{fig:headline_fig}
\end{figure}

\paragraph{Outcome self-selection.} The above example (due to \citet{roy1951some}) is just one illustration of bias due to
{\em self-selection}, wherein the outcome variable that we
observe is selected, often due to strategic considerations from a set of potential
outcomes. In this setting, 
we
observe $n$ feature vectors $\{\vec{x}^{(i)}\}_{i=1}^n$, each
accompanied by a label $y^{(i)}$ that is the output of {\em one} out of $k$
underlying models: 
\[
    y^{(i)} \in \left\{f_{\vec{w}_1}\left(\vec{x}^{(i)},\eps_{1}^{(i)}\right),\ldots,f_{\vec{w}_k}\left(\vec{x}^{(i)},\eps_{k}^{(i)}\right)\right\},
\]
where $\vec{\eps}^{(i)}=(\eps_{1}^{(i)},\ldots,\eps_{k}^{(i)})$ is a noise vector sampled 
independently across different observations, and $f_{\vec{w}_1},\ldots,f_{\vec{w}_k}$ are $k$ unknown models from some class. The model whose output $y^{(i)}$ we observe
is determined by some known function $S: \mathbb{R}^k \rightarrow \{1,\ldots,k\}$ called the {\em self-selection criterion}---in
the village example, the self-selection criterion was the
maximum, i.e., $\smash{y^{(i)} = \max_{j \in [k]}
\{f_{\vec{w}_j}(\vec{x}^{(i)},\eps^{(i)}_{j})\}}$.

We will consider two instantiations of the self-selection problem that differ
in the amount of information available to the statistician.
In the easier version of the problem---the ``known-index self-selection 
model''---the statistician observes the identity $j_*^{(i)}$ of the
model that produced each output $y^{(i)}$, in addition to observing $y^{(i)}$ itself. 
This setting captures the hunting/fishing example of
\citet{roy1951some}, where we observe {\em both} the earnings and the occupation of each villager.
In the harder version of the problem---the
``unknown-index self-selection model''---the statistician does not observe the
identity of the  model that produced each output $y^{(i)}$.

\paragraph{Applications and prior work.} It turns out that the above formulation 
captures a wide variety of settings wherein observed data is the output of a strategic or systematic selection process operating on some underlying data, the entirety of which is never observed. We discuss some examples:
\begin{enumerate}
    \item {\bf Imitation learning:} Consider the problem of learning an optimal
    policy in some contextual bandit setting wherein we observe the arms (e.g.~treatments) pulled by an expert (e.g.~doctor) in different contexts (e.g.~patients). Modeling the reward (e.g.~efficacy) from each arm $j$ as an unknown function $f_{\vec{w}_j}(\vec{x},\eps_{j})$ of the context $\vec{x}$ and additional randomness $\eps_{j}$ that the expert might observe but we do not, we assume that the expert  selects the arm $j$ with the highest reward $\max_j \{ f_{\vec{w}_j}(\vec{x},\eps_{j}) \}$. Our goal is to learn the underlying models $\vec{w}_1,\ldots,\vec{w}_k$ of the arms by observing the expert make decisions in different contexts. This scenario is an instantiation of the known-index self-selection model with the maximum selection criterion.
    
    \item {\bf Learning from strategically reported data:} A widely studied setting featuring self-selected data is one wherein agents are incentivized to  strategically choose which data to report. This is a standard challenge in Econometrics, which has recently received increased attention in 
    machine learning literature due to the impact of learning-mediated decisions  in various  contexts; see e.g.~\citep{strategicclassification,strategicwithhelddata,testoptimalpolicies} and their references. A common example is the reporting of standardized test scores in college admissions, where applicants have a variety of standardized tests available
    to them, and are 
    only required to report a chosen subset of them. 
    In a concrete setting of two tests, $A$ and $B$, let $S_A(\vec{x},
    \eps_A)$  and $S_B(\vec{x}, \eps_B)$ 
    denote the scores of an applicant with features $\vec{x}$ on each test.
    Upon completing both tests and receiving their scores $s_A$ and $s_B$, however, an applicant can compute their
    conditional probabilities of getting accepted based on each score,
    $\mathbb{P}(\text{accept}|s_A)$ and $\mathbb{P}(\text{accept}|s_B)$, and only report the score
    that is more likely to result in acceptance.
    Estimating parameters of the models determining the scores as a function of a student's features 
    now corresponds to a known-index self-selection problem with a complicated
    selection rule (albeit one that is handled by our results in~Section~\ref{sec:known}).
    \item {\bf Learning from market data: } Markets are often in disequilibrium. Following \citet{fair1972methods}, consider a model of
    the housing market, wherein there is a supply  
    $S(\vec{x},\eps_S) = \vec{w}_S^{\rm T}\vec{x} + \eps_{S}$ and a demand 
    $D(\vec{x},\eps_D) = \vec{w}_D^{\rm T}\vec{x} + \eps_{D}$ of houses with features $\vec{x}$, but the market is in 
    disequilibrium and supply does not equal demand. So the quantity transacted is 
    $Q(\vec{x}) = \min\{S(\vec{x},\eps_S),D(\vec{x},\eps_D)\}$, where 
    $(\eps_S,\eps_D)$ are random shocks. This example can be captured by the general 
    model discussed above by setting $k=2$, considering linear models, and taking the selection 
    criterion to be the minimum selection criterion. Moreover, this is an instance of
    the unknown-index model as we do not observe whether the disequilibrium is caused
    by lack of supply or lack of demand.
    \item {\bf Learning from auction data:} 
    \citet{athey2002identification} and a large body of literature in Econometrics
    consider the problem of learning bid (and valuation) distributions from auction data with partial observability, wherein only the winner of each auction and the price they paid are observed. Consider such observations in repeated first-price auctions.
    We can cast this problem as an instance of known-index self-selection where $k$ is the number of bidders,
    the models are parametric/non-parametric bid distributions 
    mapping randomness $\eps_j$ to a bid, and the 
    selection rule is the maximum function. A body of work in the literature has provided estimation and identification results in this setting~\citep{athey2007nonparametric}, including recent work of \citet{cherapanamjeriestimation} which demonstrates polynomial-time algorithms
    for estimating the bid distributions non-parametrically to within 
    Kolmogorov distance $\eps$.
\end{enumerate}

 As suggested by the diversity of examples above, models with self-selection bias %
have received extensive study  due to their
numerous applications.
These include
studies of participation in the labor
force~\cite{heckman1974shadow,heckman1979sample,nelson1977censored,cogan2014labor,hanoch2014hours,hanoch2014multivariate},
retirement decisions~\cite{gordon1980market},  returns to
education~\cite{griliches1978missing,kenny1979returns,willis1979education},
effects of unions on wages~\cite{lee1978unionism,abowd1982job}, migration and
income~\cite{nakosteen1980migration,borjas1987self}, physician and lawyer
behavior~\cite{poirier198111,weisbrod1983nonprofit}, tenure choice and the
demand for
housing~\cite{lee1978estimation,rosen1979housing,king1980econometric},
identification of auction models under partial
observability~\cite{guerre2000optimal,athey2002identification,athey2007nonparametric},
and more;
see~\cite{maddala1986limited,cameron2005microeconometrics,brooks2019introductory}
for textbook introductions to this field and further applications. At a high
level, the reason why self-selection bias is so prevalent  is
that in many practical scenarios the observed labels are the outcomes of some
selection procedure that looks at an underlying ``complete sample'' to select
which part of it will be revealed. 

This {\em endogeneity} in selecting which of the $k$ models is observed in each sample, i.e.~the
dependence of this selection on the realized output of each model,
makes the
estimation of the underlying models challenging {\em even in the known-index case}. This stands in sharp contrast to mixture models where the selection of which model is observed is random, and thus {\em exogenous}, and as a result the estimation is straight-forward in the known-index case.

Since the work of Roy~\cite{roy1951some}, and despite the long history and use of self-selection
models, there only exist results proving that unbiased estimates can be recovered in the asymptotic sample regime in certain simple variants of the self-selection problem~\cite{heckman1974shadow,berndt1974estimation,goldfelfd1975estimation,lee1978estimation},
but computationally and statistically efficient algorithms are lacking, even
in very simple cases of the problem such as the housing market disequilibrium
model described above, even in the known-index setting. 
More broadly, self-selection models fall under the literature of regression with missingness in the outcomes, where the missingness is not at random; see e.g.~\cite{rotnitzky1995semiparametric,rotnitzky1998semiparametric,tchetgen2018discrete} and their references. However, the identification structures considered in this literature do not apply to our setting and/or the results are asymptotic.
We discuss existing approaches for self-selection and related models more extensively in Section~\ref{sec:intro:related}.

\subsection{Our results} 
\label{sec:intro:setup}
In this work, we focus on the simple yet prevalent case where the potential
outcomes are linear in the collected features (as in the housing market
disequlibrium example), with a residual (random) error term that is uncorrelated
across potential outcomes.
More precisely, 
we consider a setting involving $n$ individuals
(observations) and $k$ potential outcomes (models). Each individual $i \in [n]$ has a feature
vector $\vec{x}^{(i)} \in \R^d$ and each model $j \in [k]$ has a vector of regression
parameters $\vec{w}^*_j \in \R^d$. Then, each individual with feature vector
$\vec{x}$ recieves a label for each outcome $j$ equalling 
$\smash{y_{j} = \vec{w}_j^{* \top} \vec{x} + \eps_{j}}$,
where $\vec{\eps}$ is assumed to be an standard multivariate Gaussian random
variable. Our setting will involve self-selection bias arising from the fact
that every individual will only choose to reveal one of their labels, 
$j_* \in \{1, \ldots, k\}$, as determined by some function of
that individual's full set of labels $y_{1}, \ldots, y_{k}$.
Our goal in this paper is to estimate the $k$ parameter vectors 
$\vec{w}^*_1$, $\dots$, $\vec{w}^*_k$ under both the known-index and
unknown-index observational models.

We begin with our results in the known-index setting, which we formally define
in Definition~\ref{defn:index_observed}: 
\begin{description}
  \item[Known-index Setting:] We observe $n$ samples 
    $(\vec{x}^{(i)}, y^{(i)}, j_*^{(i)})$, where 
    $j_*^{(i)} = S(y_{1}^{(i)}, \dots, y_{k}^{(i)})$, and 
    $y^{(i)} = y_{j_*^{(i)}}^{(i)}$ for
    some known self-selection rule
    $S : \mathbb{R}^k \rightarrow \{1, \ldots, k\}$.
\end{description}
Here, we can estimate the unknown parameter vectors $\vec{w}^*_1$, $\dots$,
$\vec{w}^*_k$ to arbitrary accuracy and we allow for quite general
self-selection rules.
In particular, we allow the self-selection rule $S$ to be any 
\textit{convex-inducing rule} (Definition~\ref{defn:convex-inducing}): 
letting $\vec{y} \in \mathbb{R}^k$ be the vector of potential
outcomes, if we fix the $j$-th 
coordinate $y_j$ for any $j \in [k]$, then deciding whether $j$ will be the
winner (i.e., whether $j_* = j$) is the same as
deciding whether $\vec{y}_{-j}$ belongs to some convex set (this convex set
can depend on $y_j$).
We formally define the self-selection rule in Definition 
\ref{def:selfSelectionRule} and the convex-inducing self-selection rule in 
Definition \ref{defn:convex-inducing}. Below we present an informal version of 
our estimation theorem for the known-index case. The corresponding formal 
version of the theorem can be found in Theorem \ref{thm:knownIndex}.

\begin{inftheorem}[Known-Index Estimation -- Theorem \ref{thm:knownIndex}] \label{infthm:knownIndex}
  Let $(\vec{x}^{(i)}, y^{(i)}, j_*^{(i)})_{i = 1}^n$ be $n$ observations from 
  the known-index self-selection model with $k$ linear models $\vec{w}^*_1$,
  $\dots$,  $\vec{w}^*_k$ as described in Section 
  \ref{sec:intro:setup}. If the self-selection rule is convex-inducing and that the probability of observing each model is lower 
  bounded by $(\alpha / k)$ for some $\alpha > 0$, then there is an estimation algorithm that outputs 
  $\hat{\vec{w}}_1$, $\dots$, $\hat{\vec{w}}_k$ with 
  $\norm{\hat{\vec{w}}_i - \vec{w}_i^*} \le \eps$, when 
  $n \ge \poly(d, k, 1/\alpha, 1/\eps)$. Furthermore, the running time of the algorithm is 
  $\poly(d, k, 1/\alpha, 1/\eps)$.
\end{inftheorem}

The case of unknown-index is significantly more challenging and as we already 
mentioned even the with infinite number of samples it is unclear if we have 
enough information to estimate $\vec{w}_j$'s. We define the setting formally in
Definition \ref{defn:index_unobserved} and informally below:
\begin{description}
    \item[Unknown-index Setting:] We observe $n$ samples of the
    form $(\vec{x}^{(i)}, y^{(i)})$, where $\vec{x}^{(i)} \sim \mathcal{N}(0,
    \bm{I}_d)$, and
    $y^{(i)} = \max_{j \in [k]} y_{j}^{(i)}$.
\end{description}

Observe that in the unknown-index setting problem we assume a Gaussian prior distribution for the covariates $\vec{x}^{(i)}$. This is a classical assumption in other linear regression settings, e.g., mixtures of linear regressions, where even the identifiability of the parameters is unclear without prior distribution assumption on $\vec{x}^{(i)}$. We face a similar situation here, even our identifiability result for the unknown-index setting rely on the prior distribution of $\vec{x}^{(i)}$. This is not the case for the known-index setting where we can assume that $\vec{x}_i$ can be picked arbitrarily.

Our first result below shows this
identifiability and its formal version is Theorem \ref{thm:identifiability_no_index}.

\begin{inftheorem}[Unknown-index Identifiability -- Theorem \ref{thm:identifiability_no_index}] \label{infthm:unknownIndex:identifiability}
    If we have infinitely many samples from the unknown-index
    setting, then we can identify
    all $k$ linear models $\vec{w}^*_1$, $\dots$, $\vec{w}^*_k$.
\end{inftheorem}

  Next we continue with finite-sample and finite-time algorithms for the 
unknown-index case. To achieve this problem we need some separability assumption
between the $\vec{w}_i^*$.

\begin{infassumption}[Separability Assumption -- See Assumption \ref{as:no_index}] \label{infasp:separability}
    The projection of any vector
  $\vec{w}^*_i$ to the direction of any other vector $\vec{w}^*_j$ cannot be
    larger that the norm of $\vec{w}_j^*$ (and in fact, must be at least
    $\Delta$ smaller). Also, each $\vec{w}_j^*$ is bounded in norm.
\end{infassumption}

Our first result for arbitrary $k$ guarantees the estimation of the parameter
vectors $\vec{w}^*_i$ within accuracy $1/\poly(k)$. For a formal statement of 
the theorem below we refer to Theorem \ref{thm:no_index}.

\begin{inftheorem}[Unknown-index Estimation for General $k$ -- Theorem \ref{thm:no_index}] \label{infthm:unknownIndex:general}
    Let $(\vec{x}^{(i)}, y^{(i)})_{i = 1}^n$ be $n$ observations from a
  self-selection setting with $k$ linear models $\vec{w}^*_1$, $\dots$, 
  $\vec{w}^*_k$ as described in the unknown-index setting of Section  
  \ref{sec:intro:setup}. If we also assume Informal Assumption 
  \ref{infasp:separability}, then there exists an estimation algorithm that outputs  
  $\hat{\vec{w}}_1$, $\dots$, $\hat{\vec{w}}_k$ with 
  $\norm{\hat{\vec{w}}_i - \vec{w}_i^*} \le 1/\poly(k)$, assuming that 
  $n \ge \exp(\poly(k)) \cdot \poly(d)$. Furthermore, the running time of our
  algorithm is also $\exp(\poly(k)) \cdot \poly(d)$.
\end{inftheorem}

We pause briefly to make a quick remark on the results of
Theorem~\ref{infthm:unknownIndex:general}. Note that
Theorem~\ref{infthm:unknownIndex:general} (unlike Theorem \ref{infthm:knownIndex}) 
does not require a lower bound on the
observation probability $\alpha / k$.
This is due to the fact that
Informal Assumption~\ref{infasp:separability} implies a weaker exponential lower bound on
$\alpha$ which in addition, does not hold \emph{uniformly} over the regressors
$\vec{x}$. 
This actually suffices for our known-index algorithm, but incurs a worse sample
complexity than the one from Theorem~\ref{infthm:knownIndex}. Finally, we study the
case where $k = 2$ and show how to estimate the 
parameters to arbitrary accuracy:

\begin{inftheorem}[Unknown-index Estimation for $k = 2$ -- Theorem \ref{thm:no_index:2}] \label{infthm:unknownIndex:2}
  Consider any $\eps > 0$ and let $(\vec{x}^{(i)}, y^{(i)})_{i = 1}^n$ be $n$ observations from a
  self-selection setting with $2$ linear models $\vec{w}^*_1$, $\vec{w}^*_2$ 
  as described in the unknown-index setting of Section \ref{sec:intro:setup}. Under Informal Assumption \ref{infasp:separability}, there exists an estimation 
  algorithm that outputs $\hat{\vec{w}}_1$, $\hat{\vec{w}}_2$ with 
  $\norm{\hat{\vec{w}}_i - \vec{w}_i^*} \le \eps$, assuming that 
  $n \ge \poly(d, 1/\eps)$. Furthermore, the running time of the
  algorithm is also $\poly(d, 1/\eps)$.
\end{inftheorem}

\subsection{Our Techniques} \label{sec:intro:contributions}
We initiate a line of work on attaining statistical and
computational efficiency guarantees in the face of structured self-selection
bias. Below we briefly explain the main ideas for the proofs of our main results.

\paragraph{Known-index case.} 
In the known-index case, we require $\poly(1/\eps, k, d)$ sample and time
complexity to estimate all $k$ model parameters to accuracy $\eps$ in $d$
dimensions, and can accommodate quite general selection criteria. To prove this
known-index result, we construct a log-likelihood-inspired objective function that has the true set of parameters as optimum. The key difficulty associated with this formulation is that unlike ``nice'' settings (for example, the data generating model belongs to an exponential family), the log-likelihood involves an integration over possible outputs of the \emph{unobserved} models. This scenario is reminiscent of latent-variable models where strong structural properties on the objective functions are uncommon. Nevertheless, we show that this objective function is \emph{strongly convex} where we crucially rely on the variance reduction properties of log-concave densities conditioned on convex sets. Our next goal is to run projected stochastic gradient descent (PSGD) on this objective function. Unfortunately, in contrast to standard settings in stochastic optimization, we do not have simple
access to unbiased stochastic estimates of the gradient due to the integrating out of the unobserved models in the objective function. Consequently, the gradient in this case involves sampling from the conditional distribution over outputs from the unobserved models given the observed sample at the candidate parameter set currently being considered. To sample from this conditional distribution, we show that a projected version of the Langevin Monte Carlo sampling algorithm due to Bubeck, Eldan and Lehec \citep{bubecksampling} mixes fast and produces an approximate stochastic gradient. Finally, we show that this
approximate stochastic gradient suffices for the PSGD algorithm to converge.
We provide the details of this algorithm and its analysis in Section
\ref{sec:known}.

\paragraph{Identification with unknown index.} 
In the more challenging unknown-index case, it is not even clear whether the 
parameters $\vec{w}_j$ are \emph{identifiable} from the sample that we have. First, we show that this is in fact the case. Our proof uses an novel identification argument which we believe can be applied to other self-selection settings beyond the Max-selection criterion considered in this work. Formally speaking, we would like to exhibit the existence of a
mapping $f$ from $\Phi$ to the set of parameters given access to the distribution function, $\Phi$, of the pairs $(\vec{x}, y)$ generated according to the self-selection model with unknown indices. Our construction of $f$ is based on a conditional moment calculation where we analyze the moments of $y$ conditioned on $\vec{x}$ lying in various one-dimensional subspaces. The main observation is that while closed form solutions are not known for the conditional moments, the higher order moments of $y$ still determine the length of the projection of the parameter vector with the largest projection along $\vec{x}$. Concretely, we show that the higher-order moments of $y$ conditioned on $\vec{x}$ being parallel to a unit vector $v$ are upper and lower bounded (up to constants) by the moments of normal distribution with $0$ and variance $\max_i (\vec{v}^\top \vec{w}_i)^2 + 1$. While a single direction does not uniquely determine any of the underlying parameter vectors, the direction maximizing this quantity over all one dimensional subspaces corresponds to the unit vector along the longest parameter vectors allowing recovery of \emph{one} of $k$ vectors. In the next step, we show that we may effectively ``peel off'' the single identified model from the distribution function, $\Phi$, reducing the problem of recovering the remaining parameter vectors to a self-selection problem with $k - 1$ parameter vectors. A recursive application of this argument allows identification of the remaining parameter vectors one by one.

\paragraph{Estimation with unknown index.}
  We then move our attention to estimation with finite time and samples. We target
the common $\max$-selection criterion and, under some separability assumption
among the $\vec{w}_j$'s, we provide an algorithm with 
$\poly(d) \cdot \exp(\poly(k))$ sample and time complexity to estimate the
regression parameters up to error $1/\poly(k)$. 
Our technique to prove these finite-time and finite-sample results is to try to
develop a finite accuracy version of our identifiability argument. Unfortunately, 
this requires an exponential (in the ambient dimension $d$) sized grid search over the unit sphere. The traditional approach to address such difficulties is to restrict our search to a
suitably chosen $k$ dimensional subspace, $U$, identifiable from the data and
crucially \emph{contains} the parameter vectors, $\vec{w}_i$. We identify this
subspace through the spectrum of a suitably chosen matrix but we encounter an additional 
\emph{key} difficulty, compared to the previous applications of this method. While
our choice of matrix, $M \triangleq \E [y^2 \vec{x} \vec{x}^\top]$, is natural, $M$ does not decompose into a independently weighted sum of matrices each corresponding to a single parameter vector, in stark contrast to the scenario encountered in simpler problems such as mixtures of linear regressions. Hence, showing that the top singular subspace of $M$
contains the $\vec{w}_i$ is significantly more involved and requires novel 
approximation ideas. Despite these difficulties, we derive a closed form \emph{lower bound} for $M$ as a (positively) weighted sum of the identity matrix, $I$ and outer products $\vec{w}_i (\vec{w}_i)^\top$ which is \emph{tight} on the nullspace of the span of $\vec{w}_i$. Notably, the coefficients of a single parameter vector in the bound depend on the other vectors -- a scenario markedly different from other applications of this method such as mixtures of linear regressions. To obtain this closed form lower bound, we replace the max function in the self-selection criterion with a \textit{smooth maximum} function resulting
in a matrix $M'$ approximating $M$ and analyze its spectrum through several careful applications of Stein's Lemma facilitated by the differentiability of the smooth maximization function. Applying a limiting argument to $M'$, we obtain our lower bound and consequently, show that the span of the parameter vectors $\vec{w}_i$ is contained in the top-$k$ singular subspace of $M$ and the singular values associated with these directions are bounded away from those for the orthogonal complement establishing a strict spectral gap. Having identified the 
low-dimensional subspace containing the $\vec{w}_i$, we may now restrict our search to 
this subspace. We conclude our estimation argument with a careful finite-sample
adaptation of our identifiability argument highlighted above. 

\paragraph{Efficient unknown-index estimation for $k=2$.} For the specific yet well-studied case of $k = 2$, e.g. \cite{fair1972methods}, we develop a estimation algorithm based on the method of moments that achieves estimation error $\eps$ with  $\poly(d, 1/\eps)$ time and sample complexity. It is an interesting open problem whether a similar procedure may be derived for the general case.

\subsection{Related Work} \label{sec:intro:related}
As previously discussed, bias due to outcome self-selection is
a well-documented phenomenon across statistics, econometrics, and the social
sciences (see Section \ref{sec:intro} for a list of references). In this
section, we discuss existing approaches to solve such problems as well as a few similar problems to the statistical and computational
ones addressed in this work.

\paragraph{Classical approaches to self-selection.} There are many parametric and semi-parametric methods 
from Econometrics for parameter estimation in the presence of 
self-selection (see \citep{lee2001self} for an overview).
To describe results in the literature let us consider a
generalization of the problem we consider here, where 
the potential outcomes $y_i$ are generated
as 
$$y_i = x^\top w_i + u_i \qquad \text{ and } \qquad
i^* = \max\left(\{x^\top w_i + \eps_i: i \in [k]\}\right),$$
where the $\epsilon_i$ and $u_i$ are all jointly 
normally distributed.
Analytical algorithms for this setting 
(i.e., where the noise that determines selection 
is non-identical to the noise in the observed outcome 
\citep{manning1987monte, maddala1985survey, hay1984let, leung1996choice})
typically focus on the case when $k=2$ for tractability reasons.
The prevailing algorithm here is the two-stage estimator
\citep{lee1978estimation, heckman1979sample}, where 
one first estimates the model $x \to i^*$
(whose true parameter is $w_1 - w_2$) then uses a Heckman correction
to estimate the outcome parameters $w_1$ and $w_2$.
Alternatively, one can use likelihood-based approaches \citep{heckman1974shadow, nelson1977censored, griliches1978missing}, 
which---as shown by \citet{olsen1982distributional}---are identified 
for a known correlation $\rho$ between $\epsilon_i$ and $u_i$ above.
For $k > 2$, the likelihood function involves several integrals,
and so the prevailing approach is to use 
Markov-Chain Monte Carlo (MCMC) and simulation-based approaches, for which there are no 
established convergence rates and the underlying algorithms are not efficient (see \citep{geweke2001simulation} for an overview).

Thus, although the problems are similarly motivated, 
our approach to the known-index case differs from 
standard Econometrics approaches in a few ways.
First, in our model the selection noise and observation noise are 
identical, which introduces bias in typical two-stage estimators.
Second, our algorithm for the known-index case applies directly to $k > 2$
and has a (known) convergence rate that is nearly linear in $n$ and 
polynomial in $k$.
Finally, we are unaware of any algorithms in Econometrics
(whether analytical or simulation-based)
that tackle the unknown-index case, where even identifiability
(Section \ref{ssec:identifiability}) is non-trivial.

\paragraph{Truncated linear regression.} Both the known-index and unknown-index
cases addressed in this work bear some similarity to the truncated and censored
linear regression problem, wherein there is a single outcome that is only seen
if it falls within a fixed observation window. 
Truncated regression problems date back to at least the works of
\citet{tobin1958estimation,amemiya1973regression,hausman1977social},
who all note the effect of omitting
outcomes that fall below or above a certain threshold on resulting regression
models. 
Recently, \citet{daskalakis2019computationally} propose a gradient-based
algorithm (and derive statistical and computational efficiency guarantees) for
estimating parameters of truncated linear regression;
\citet{ilyas2020theoretical} extend these results to truncated probit and
logistic regression.

What makes self-selection bias more challenging than these settings is the
inherent {\em endogeneity} in the process of selecting samples to be observed.
That is, in the case of truncated regression, the learner is always aware of
which samples {\em would be} truncated had they been generated by a given set of
parameter estimates. In contrast, under the self-selection model, an incorrect
set of parameter estimates can lead to an incorrect estimate of the truncation
mechanism (since one depends on the other).

\paragraph{Mixtures of linear regressions.}
The problem that we consider---and in particular the unknown-index case---bears
some similarity to the well-studied problem of learning (noisy) mixtures of linear
regressions. In the mixture-of-regressions model, the observations are also of
the form $(\bm{x}^{(i)}, y^{(i)})$ where $y^{(i)}$ is the output of one of $k$
linear models: unlike our setting, however, in mixtures of linear regressions
the model from which $y^{(i)}$ is observed is selected at random, and crucially,
{\em independently} of the model outputs themselves.

To illustrate the significance of this distinction, observe that for 
{\em mixtures} of linear regressions, the corresponding ``known-index'' case
(i.e., where we observe the regression from which each datapoint was generated)
is trivial. In particular, it corresponds to estimating $k$ independent 
ordinary least squares models. In contrast, the known-index case here 
still requires a more elaborate algorithmic approach and analysis, as the 
datasets remain correlated with one another even conditioned on the selection 
indices.

In terms of finite-sample estimation algorithms, 
for the mixtures of linear regressions problem both noiseless
\cite{li2018learning} and noisy cases 
\cite{kwon2020converges, chen2020learning, diakonikolas2020small} have been
considered in the literature. The guarantee that we get for our unknown-index case 
is similar to the guarantee of \cite{chen2020learning} for mixtures of linear 
regression, i.e., to achieve error of order $1/\poly(k)$ when the magnitude of the
noise is also $1/\poly(k)$, we require time and sample complexity $\poly(d) \cdot \exp(\poly(k))$. For the
mixtures of linear regression case this $1/\poly(k)$ accuracy is enough to get 
arbitrary small accuracy $\eps$ by applying the EM algorithm that admits local 
convergence in this case as shown by \cite{kwon2020converges}. This local 
convergence result is missing in our unknown-index setting and it is a very 
interesting open problem that will lead to estimation with arbitrary small 
accuracy when combined with the results that we show in this paper. 

\paragraph{Max-affine regression.}
Another related line of literature to the unknown-index case is the problem of 
\textit{max-affine regression} \citep{ghosh2019max}. The main difference between
the settings is that in max-affine regression, the noise is added to the 
model after the maximum operator. Hence, the self-selection bias due to the noise 
does not appear in this setting. Although this seems like a small change it makes the problems completely different both technically and conceptually:
\begin{enumerate}
  \item[-] Conceptually, the main application of max-affine regression is in settings where we want to regress with respect to the set of convex functions. The set of convex functions is non-parametric and in high-dimensions this will result in an exponential sample and computational complexity. On the other hand if we parametrize the convex function as the maximum of a set of affine functions then we get a finite number of parameters and we can achieve much better efficiency. The resulting statistical problem is max-affine regression. But max-affine regression cannot capture the biases that we mention above due to strategic agents of imitation learning.
  \item[-] Technically, the max-affine regression could be solved via analogs of OLS. The main problem there is that the optimization landscape is non necessarily convex but from a statistical point of view minimizing the average square loss is a meaningful thing to do. In our case, the main problem is that the solutions of naive optimization problem are biased and hence we need to find the correct optimization problem that effectively debiases the data. In the unknown-index setting this is particularly challenging and in fact even arguing about the identifiability is non-trivial.
\end{enumerate}
Our unknown-index setting seems more difficult
from this point of view, but the fact that there is noise added after the maximum
makes the two problems not reducible from one to another. 

In fact, one can view max-affine regression as a case of self-selection where
the error terms $\eps_j$ are perfectly correlated across potential outcomes $j
\in [k]$, as opposed to perfectly independent (as in our setting). 
This raises a question of whether we can design algorithms to handle more
complex correlations between the model-specific error terms $\eps_j$.

    \section{Model and Main Results} \label{sec:model}
    
\noindent \textbf{Notation.} We use $\mathcal{N}(\vec{\mu}, \vec{\Sigma})$ to 
denote the normal distribution with mean $\vec{\mu}$ and covariance matrix 
$\vec{\Sigma}$. For any measurable set $\mathcal{K} \subseteq \R^d$ 
we denote 
with 
$\mathcal{N}(\vec{\mu}, \vec{\Sigma}; \mathcal{K})$ the normal distribution 
$\mathcal{N}(\vec{\mu}, \vec{\Sigma})$ conditioned on that the output belongs to 
$\mathcal{K}$. We will use $f_{\sigma}$ and $F_{\sigma}$ to denote the PDF and
CDF, respectively, of the single-dimensional normal distribution 
$\mathcal{N}(0, \sigma^2)$. When $\sigma$ is clear from the context we way just 
use $f$ and $F$. Let $\vec{A}$ be an $n \times m$ matrix, we define 
$\vec{A}^{\flat}$ to be a vector in $\R^{n \cdot m}$ that is the flattening of 
$\vec{A}$, where for the flattening we use the lexicographic order of the 
coordinates of $\vec{A}$. 
Let $\mathcal{K} \subseteq \R^d$ be a
convex set let $\vec{x} \in \R^d$, we define $\Pi_{\mathcal{K}}(\vec{x})$ to be 
the projection of $\vec{x}$ to $\mathcal{K}$. We use $\mathcal{B}(\vec{x}, r)$ to
denote the Euclidean ball with center $\vec{x}$ and radius $r$ and when 
$\vec{x} = \vec{0}$ we also use simply $\mathcal{B}(r)$. For a subspace $\mathcal{V}$, we use $\proj{\mathcal{V}}$ and $\pproj{\mathcal{V}}$ to denote the projection operators onto $\mathcal{V}$ and the orthogonal complement of $\mathcal{V}$ respectively. For a vector $\vec{v}$ and matrix $V$, $ \proj{\vec{v}}$ and $\proj{V}$ denote the projection operators onto the one-dimensional subspace along $\vec{v}$ and the column space of $V$ respectively and analogously for $\pproj{\vec{v}}$ and $\pproj{V}$.
\medskip

In this section we define the two models that we are solving: the known-index 
setting and the unknown-index setting, we describe the assumptions that we use 
for each of the settings and we formally state our main results.

\subsection{Known-Index Setting} \label{sec:model:knownIndex}

  We start with the definition of a self-selection rule that is fundamental in 
the modeling of the linear regression problem with self-selection bias in the 
known-index setting.

\begin{definition}[Self-Selection Rule] \label{def:selfSelectionRule}
  A \textit{self-selection rule} is a function 
  $S: \mathbb{R}^k \to [k]$. We assume throughout this work that we have query
  access to $S$, i.e.  for every $\bm{y} \in \mathbb{R}^k$ there is an oracle
  that outputs $S(\bm{y})$.
  We also define a \underline{slice} $\slice_j(a)$ of $S$ as follows:
  \[
    \slice_j: \mathbb{R} \rightrightarrows \mathbb{R}^{k-1} \qquad 
    \slice_j(a) \triangleq \{\bm{y}_{-j}: \bm{y} \in \mathbb{R}^{k}, S(\bm{y}) = j, \bm{y}_j = a\}.
  \]
  where $\rightrightarrows$ refers to a point-to-set map.
\end{definition}

  The first setting that we consider is the \textit{known-index setting}, where
the observed data for each covariate include the response variable, as well as
the index of the corresponding regressor.

\begin{definition}[Self-Selection with Observed Index]
    \label{defn:index_observed}
      Self-selection with observed index is parameterized by an unknown set of
    weight vectors $\vec{w}^*_1,\ldots \vec{w}_k^* \in \mathbb{R}^d$, a known 
    variance $\sigma > 0$,
    and a self-selection rule
    $S: \mathbb{R}^k \rightarrow [k]$. For $i \in [n]$, an observation
    $(\vec{x}^{(i)}, y^{(i)}, j_*^{(i)})$ in this model is a triplet, comprising
    a feature vector $\vec{x}^{(i)}$, and a pair $(y^{(i)},j_*^{(i)})$ sampled as
    follows conditioning on $\vec{x}^{(i)}$: 
    \begin{enumerate}
        \item[(1)] Sample the latent variables $y_{j}^{(i)} \sim  \mathcal{N}(\vec{w}_j^{* \top} \vec{x}^{(i)},
        \sigma^2)$ for each $j \in [k]$.
        \item[(2)] Reveal the observation index $j_*^{(i)} =
        S(y_{1}^{(i)},\ldots,y_{k}^{(i)})$ 
        and the response variable $y^{(i)} = y_{j_*^{(i)}}^{(i)}$
    \end{enumerate}
    For a fixed $\vec{x}$ and $\vec{W}^*=(\vec{w}^*_1,\ldots \vec{w}_k^*)$,  we use $\mathcal{D}(\vec{x}; \vec{W}^*)$ to denote the
    probability distribution of the pair $(i, y)$ sampled according  to Steps (1) and (2) above. For example, if 
    $S(y_{1}^{(i)},\ldots,y_{k}^{(i)}) = \arg\max_{j \in [k]} y_{j}^{(i)}$, then under
    this model we observe only the largest $y_{j}^{(i)}$ and its index. Our goal is
    to obtain an accurate estimate $\hat{\vec{w}}_j$ for each $\vec{w}^*_j$ given
    only samples from the above model.
\end{definition}

\noindent In most of this paper we are mainly concerned with estimating 
the weights $\vec{w_1^*}$, $\ldots$, $\vec{w_k^*}$ from observation of the
covariates and the {\em maximal} response variable 
\[y = \max_{j \in [k]}\ \{\vec{w_j}^\top \vec{x} + \bm{\eps}_j\}, \qquad \text{
where } \bm{\eps} \sim \mathcal{N}(0, \sigma^2 \cdot \bm{I}_{k}). \] 
In the observed-index setting, however, it turns out that our efficient estimation
can be applied to a much larger set of selection functions $S(\vec{y})$, which we
call {\em convex-inducing}.

\begin{definition}[Convex-inducing Self-Selection Function]
    \label{defn:convex-inducing}
    We call a self-selection function $S: \mathbb{R}^k \to [k]$ {\em
    convex-inducing} if, for each $j \in [k]$ and $a \in \mathbb{R}$, 
    the slice $\slice_j(a)$ (Definition \ref{def:selfSelectionRule})
    is a convex set.
\end{definition}

Notably, setting $S(\bm{y}) = \arg\max_{j \in [k]} \bm{y}_j$ recovers the maximum-response
observation model, and this choice of $S(\cdot)$ also satisfies Definition
\ref{defn:convex-inducing} with $C_j(a) = (-\infty, a]^{k-1}$. 
The definition also allows us to capture cases beyond the maximum; 
for example, convex-inducing functions also include $S(\bm{y}) = \arg\max_{j \in [k]} f_j(\bm{y}_j)$ for any set of monotonic functions $\{f_j\}_{j = 1}^k$.

Our estimation procedure relies on the following assumptions
on the feature and parameter vectors, which are classical and present even in many standard 
linear regression instances.

\begin{assumption}[Feature and Parameter Vectors] \label{asp:known:1}
    For the set of feature vectors $\{\vec{x}^{(i)}\}_{i = 1}^n$ that we have
  observed we assume that:
  \begin{align} \label{eq:asp:known}
    \norm{\vec{x}^{(i)}}_2 \le C \quad \text{for all $i \in [n] \quad$ and } \quad \quad
    \frac{1}{n} \sum_{i = 1}^n \vec{x}^{(i)} \vec{x}^{(i) \top} \succeq \mathbf{I}.
  \end{align}
  For the true parameter vectors $\vec{w}_1^*$, $\ldots$, $\vec{w}_k^*$ we
  assume that $\norm{\vec{w}_j^*}_2 \le B$.
\end{assumption}

Although our theorem allows for a wide range of self-selection rules, we need some
additional assumptions that allow the recovery of every parameter vector 
$\vec{w}_j$. To see why this is needed imagine the setting where the 
self-selection rule $S$ is the $\argmax$ and for some $j$ the coordinates of
$\vec{w}_j$ are extremely small. In this case it is impossible to hope to estimate
$\vec{w}_j$ since there is a huge probability that we do not even observe one 
sample of the form $y = \vec{x}^T \vec{w}_j + \eps_j$. For this reason, we need the
following assumption.

\begin{assumption}[Survival Probability] \label{asp:known:2}
  There exists a constant $\alpha > 0$ such that, 
  for every sample $(\vec{x}, y, j_*)$ that we have observed, 
  the following properties hold: 
  \begin{itemize}
    \item[(i)] for every $j \in [k]$, the
    probability that $j_* = j$ is at least $\alpha / k$,
    \item[(ii)] the mass of the set $C_{j_*}(y)$ with respect to 
    $\mathcal{N}((\vec{W}^{* \top} \vec{x})_{-j_*}, \sigma^2 \cdot \vec{I}_{k-1})$
    is at least $\alpha$.
  \end{itemize}
\end{assumption}

Finally, we need to assume oracle access to a self-selection rule $S(\cdot)$
that is convex-inducing, in the same sense introduced in Definition
\ref{defn:convex-inducing}: 
\begin{assumption}[Self-Selection Rule] \label{asp:known:3}
  We assume that the self-selection rule $S: \mathbb{R}^k \to [k]$ is
  convex-inducing, and that for every $j \in [k]$ and $y \in \R$ we have access
  to the following:  
  \begin{itemize}
    \item[(i)] a membership oracle for $S(\vec{y})$, i.e., for every 
    $\vec{y} \in \R^{k}$ and $j \in [k]$, we can know whether $S(\vec{y}) = j$,
    and
    \item[(ii)] a projection oracle to the convex sets  that correspond
    to slices $\slice_j(\cdot)$ of the self-selection rule, i.e., for every
    $\vec{y} \in \mathbb{R}^{k-1}$ 
    and every $a \in \mathbb{R}$,
    we can efficiently
    compute 
    $\vec{y}' =
    \argmin_{\vec{s} \in \slice_j(a)} \norm{\vec{y} - \vec{s}}$.
  \end{itemize}
\end{assumption}

Having established the above definitions and assumptions, 
we are now ready to state our main 
theorem for the known-index setting.

\begin{theorem}[Known-Index Estimation] \label{thm:knownIndex}
    Let $(\vec{x}^{(i)}, y^{(i)}, i^{(i)})_{i = 1}^n$ be $n$ observations from 
  a self-selection setting with $k$ linear models $\vec{w}^*_1$, $\dots$, 
  $\vec{w}^*_k$ as in Definition \ref{defn:index_observed}. Under Assumptions \ref{asp:known:1}, \ref{asp:known:2}(i), and \ref{asp:known:3}(i),
  there exists an algorithm that outputs $\hat{\vec{w}}_1$, $\dots$,
  $\hat{\vec{w}}_k$ such that with probability at least $0.99$, for every 
  $j \in [k]$
  \[ \norm{\hat{\vec{w}}_j - \vec{w}_j^*}_2^2 \le \poly(\sigma, k, 1/\alpha, B, C) \cdot \frac{\log(n)}{n}.\] 
  If we additionally assume the Assumptions \ref{asp:known:2}(ii), and 
  \ref{asp:known:3}(ii), then the running time of the algorithm is 
  $\poly(n, d, k, 1/\alpha, B, C, \sigma, 1/\sigma)$.
\end{theorem}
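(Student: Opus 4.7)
The plan is to set up a negative-log-likelihood objective whose minimizer is $\vec{W}^* = (\vec{w}_1^*, \ldots, \vec{w}_k^*)$, show that its empirical counterpart is strongly convex on the product of balls of radius $B$, and then run projected SGD whose stochastic gradients come from a projected Langevin sampler on the conditioning slice. For a sample $(\vec{x}, y, j_*)$ and candidate $\vec{W}$, the conditional density factorizes as $f_\sigma(y - \vec{w}_{j_*}^\top \vec{x})$ times the probability that an independent draw from $\mathcal{N}((\vec{W}^\top \vec{x})_{-j_*}, \sigma^2 \vec{I})$ lands in the convex slice $\slice_{j_*}(y)$, giving (up to constants) the per-sample loss
\[
\ell_i(\vec{W}) = \frac{1}{2\sigma^2}\lr{y^{(i)} - \vec{w}_{j_*^{(i)}}^\top \vec{x}^{(i)}}^{2} \;-\; \log \int_{\slice_{j_*^{(i)}}(y^{(i)})} \prod_{j \ne j_*^{(i)}} f_\sigma\!\lr{z_j - \vec{w}_j^\top \vec{x}^{(i)}}\, d\vec{z}.
\]
I would minimize $L_n(\vec{W}) = \frac{1}{n}\sum_i \ell_i(\vec{W})$ over the ball of radius $B$ in each $\vec{w}_j$.

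The heart of the argument is strong convexity. A direct calculation shows that $\nabla^2 \ell_i(\vec{W})$ equals $\vec{x}^{(i)}\vec{x}^{(i)\top}$ tensored with a block matrix that is $\sigma^{-2}$ on the $(j_*,j_*)$-entry (from the Gaussian piece) and $\sigma^{-2}\vec{I} - \sigma^{-4}\Sigma_i(\vec{W})$ on the $-j_*$ block, where $\Sigma_i(\vec{W})$ is the covariance of $\mathcal{N}((\vec{W}^\top\vec{x}^{(i)})_{-j_*^{(i)}}, \sigma^2\vec{I};\, \slice_{j_*^{(i)}}(y^{(i)}))$. Since the slice is convex (Assumption \ref{asp:known:3}(i)) the truncated density is log-concave, so Brascamp--Lieb type variance reduction gives $\Sigma_i(\vec{W}) \preceq \sigma^2 \vec{I}$; upgrading to the strict bound $\Sigma_i(\vec{W}) \preceq (1-c(\alpha))\sigma^2\vec{I}$ uses the survival-probability condition of Assumption \ref{asp:known:2}(i) together with the norm bounds of Assumption \ref{asp:known:1} to ensure the conditioning set carries nontrivial Gaussian mass in every direction. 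Combined with $\frac{1}{n}\sum_i\vec{x}^{(i)}\vec{x}^{(i)\top}\succeq \vec{I}$, this yields $\nabla^2 L_n(\vec{W}) \succeq \kappa\,\vec{I}$ with $\kappa = 1/\poly(\sigma,k,1/\alpha,B,C)$ uniformly on the ball.

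For the algorithm, the gradient of $\ell_i$ with respect to $\vec{w}_j$ ($j\ne j_*^{(i)}$) equals $\sigma^{-2}\vec{x}^{(i)}(\mathbb{E}[z_j]-\vec{w}_j^\top\vec{x}^{(i)})$ under the same truncated Gaussian. Assumption \ref{asp:known:2}(ii) gives the slice $\alpha$-mass and Assumption \ref{asp:known:3}(ii) gives a projection oracle, which is exactly the input that projected Langevin Monte Carlo (Bubeck--Eldan--Lehec) needs to mix in $\poly(k,\sigma,1/\sigma,B,C,1/\alpha,\log(1/\delta))$ steps; averaging a few Langevin samples gives a stochastic gradient with polynomially small bias and bounded variance. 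Feeding it into projected SGD on $L_n$ converges to $\arg\min L_n$ in polynomially many iterations by the standard analysis for strongly convex stochastic optimization with biased gradients, and the $\log(n)/n$ statistical rate on $\|\hat{\vec{w}}_j-\vec{w}_j^*\|^2$ follows from strong convexity together with a sub-Gaussian tail bound on $\|\nabla L_n(\vec{W}^*)\|$, which has mean zero and bounded-moment coordinates on the parameter ball.

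The main obstacle I anticipate is the \emph{quantitative} strong-convexity constant: the variance-reduction step must yield $\kappa$ scaling as $\poly(\alpha)$ rather than $\exp(-1/\alpha)$, uniformly over all candidate $\vec{W}$'s in the ball and over all samples, and the Langevin mixing time on the slice must be controlled by the same polynomial so that the per-iteration cost of projected SGD stays $\poly(d,k,1/\alpha,B,C,\sigma,1/\sigma)$. Pinning both constants down simultaneously while keeping the bias of the stochastic gradient small enough to preserve the $O(1/T)$ rate of the strongly convex SGD analysis is the technical crux.
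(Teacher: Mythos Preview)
Your overall architecture---the log-likelihood objective, Langevin-based gradient estimation on the slice under Assumptions~\ref{asp:known:2}(ii) and~\ref{asp:known:3}(ii), and PSGD with biased gradients---matches the paper. The gap is in how you obtain strong convexity.

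You propose to upgrade Brascamp--Lieb to a \emph{strict} variance bound $\Sigma_i(\vec{W}) \preceq (1-c(\alpha))\sigma^2 \vec{I}$ on the $-j_*$ block, arguing that Assumption~\ref{asp:known:2}(i) ensures the slice has nontrivial Gaussian mass in every direction. This step does not go through. Kanter's inequality (which the paper cites) gives only $\Sigma_i \preceq \sigma^2 \vec{I}$, and no uniform strict improvement is available: for the $\max$ rule the slice is $(-\infty, y^{(i)}]^{k-1}$, and when $y^{(i)}$ is large relative to $\vec{w}_j^\top \vec{x}^{(i)}$ the truncation barely shrinks the variance of $z_j$; for a general convex-inducing rule the slice can be a half-space, with zero variance reduction parallel to its boundary. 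Assumption~\ref{asp:known:2}(i) bounds the probability that index $j$ is \emph{selected}, not the geometry of the slice once $(j_*,y)$ are fixed, so it cannot supply the constant $c(\alpha)$ you need. Since your empirical $L_n$ uses the observed $(y^{(i)},j_*^{(i)})$, without strict reduction the $j$-th block of $\nabla^2 L_n$ is only guaranteed PSD on samples with $j_*^{(i)}\neq j$, and you would additionally need the covariance of $\{\vec{x}^{(i)}: j_*^{(i)}=j\}$ to be well-conditioned---a selected subset for which Assumption~\ref{asp:known:1} says nothing.

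The paper avoids this entirely. It keeps only the non-strict $\Sigma_i \preceq \sigma^2 \vec{I}$, so the $-j_*$ block of the single-sample Hessian is merely $\preceq 0$; \emph{all} strong convexity comes from the $(j_*,j_*)$ block $-\sigma^{-2}\vec{x}\vec{x}^\top$. The key is that the paper works not with your empirical $L_n$ but with the semi-population objective
\[
\bar\ell(\vec{W}) \;=\; \frac{1}{n}\sum_{i=1}^n \mathbb{E}_{(y,j_*)\sim \mathcal{D}(\vec{x}^{(i)};\vec{W}^*)}\bigl[\ell(\vec{W};\vec{x}^{(i)},y,j_*)\bigr].
\]
Taking expectation over $j_*$, each index $j$ is the observed one with probability $\ge \alpha/k$ by Assumption~\ref{asp:known:2}(i), so every diagonal block of the expected Hessian picks up a $-\tfrac{\alpha}{k\sigma^2}\vec{x}^{(i)}\vec{x}^{(i)\top}$ contribution; combined with $\tfrac{1}{n}\sum_i \vec{x}^{(i)}\vec{x}^{(i)\top}\succeq \vec{I}$ this gives $\nabla^2\bar\ell \preceq -\tfrac{\alpha}{k\sigma^2}\vec{I}$ directly, with no strict variance reduction needed. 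To make the SGD actually target $\bar\ell$ rather than $L_n$, the paper uses a fresh-sample trick: a birthday-paradox bound ($n\ge 2T^2/\zeta$) guarantees no sample index is reused across $T$ iterations, so each observed $(y^{(i)},j_*^{(i)})$ acts as an independent draw from $\mathcal{D}(\vec{x}^{(i)};\vec{W}^*)$ and the stochastic gradient is (approximately) unbiased for $\nabla\bar\ell$. Since $\vec{W}^*$ is the exact minimizer of $\bar\ell$, no separate concentration argument for $\nabla L_n(\vec{W}^*)$ is required.
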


  In Section \ref{sec:known}, we explain our algorithm for proving Theorem 
\ref{thm:knownIndex} and we describe the main ideas and techniques for the proof.
Specifically,
to construct an efficient algorithm we use an interesting combination of 
projected gradient descent and the Langevin algorithm, after we develop an 
appropriate objective function.

\subsection{Unknown-Index Setting} \label{sec:model:unknownIndex}

We next consider the more challenging {\em unknown-index} setting, where we have
sample access to the response variable, but do not observe the index of the
corresponding weight vector. In this more challenging setting, we make a few 
additional assumptions on the structure of the problem, namely that the
covariates $\vec{x}^{(i)}$ are drawn from an mean-zero identity-covariance
Gaussian distribution (rather than being arbitrary); that the noise terms
$\vec{\eps}^{(i)}$ are also identity-covariance (rather than $\sigma\cdot
\bm{I}_k$ for $\sigma > 0$); and that the self-selection rule is the maximum
response rule $S(\bm{y}) = \arg\max_{j \in [k]} \bm{y}_j$ (rather than an arbitrary
convex-inducing rule).

\begin{definition}[Self-Selection with Unknown Index]
    \label{defn:index_unobserved}
    Just as for Definition \ref{defn:index_observed}, we have a set of weight
  vectors $\vec{w}^*_1,\ldots \vec{w}^*_k \in \mathbb{R}^d$. 
  We assume that vectors $\vec{x}^{(i)}$ are drawn i.i.d. from the standard
  multivariate normal distribution $\mathcal{N}(\vec{0}, \bm{I}_d)$. For
  each sampled covariate $\vec{x}^{(i)}$, we:
  \begin{enumerate}
    \item[(1)] Sample $y_{j}^{(i)} \sim \mathcal{N}(\vec{w_j}^{* \top}
    \vec{x}^{(i)}, 1)$ for each $j \in [k]$.
    \item[(2)] Compute the maximum response $y^{(i)} = \max_{j \in [k]} y_{j}^{(i)}$.
    \item[(3)] Observe only the covariate-response pair $(\vec{x}^{(i)},
    y^{(i)})$: in particular, we do not observe the index of the maximum
    response. 
  \end{enumerate}
  Our goal is to obtain an accurate estimate $\hat{\vec{w}}_j$ for each
  $\vec{w}^*_j$ given only samples from the above model.
\end{definition}

This model resembles mixtures of linear regressions, with the key
difference being that in the latter, the index of the weight vector used for
each covariate is sampled i.i.d. from a categorical distribution with fixed
mixture probability. In contrast, here the probability of observing a response
from a given weight vector depends on both the covariate and the sampled noise
$\eta_i$. 
\smallskip

  In this model it is not even clear that estimation of $\vec{w}^*_j$'s is
  possible: 
indeed, our first result (Theorem \ref{thm:identifiability_no_index}) is an
information-theoretic one that shows (infinite-sample) identifiability for
unknown-index model. 
To show this information-theoretic result, we use a novel argument that might be
of independent interest---we believe it may be used to show 
identifiability of other statistical problems with self-selection bias as well.
The proof of the theorem below is presented in Section 
\ref{ssec:identifiability}.

\begin{restatable}{theorem}{unknownidentify} \label{thm:identifiability_no_index}
    Let $\vec{W}^* = [\vec{w}^*_j]_{j = 1}^k \in \R^{d \times k}$ and
  $\Phi_{\vec{W}^*}$ be the distribution function of the pairs 
  $(\vec{x}^{(i)}, y^{(i)})$ associated with the self-selection model with
  unknown indices as described in Definition~\ref{defn:index_unobserved}. 
  Then, there exists a mapping $f$ satisfying:
  \begin{equation*}
    \forall~\vec{W}^* = [\vec{w}^*_j]_{j = 1}^k \subset \R^d ~ \text{it holds that} ~ f(\Phi_{\vec{W}^*}) = \vec{W}^*.
  \end{equation*}
\end{restatable}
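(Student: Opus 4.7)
I will prove this by induction on $k$, peeling off one parameter vector at a time. The two non-trivial ingredients are: (i) from $\Phi_{\vec{W}^*}$ one can single out a specific parameter vector, which I take to be the one of largest Euclidean norm; and (ii) once such a vector is known, the distribution $\Phi_{\vec{W}^* \setminus \{\vec{w}^*_{i^*}\}}$ of the self-selection model over the remaining $k-1$ vectors can be reconstructed in closed form, reducing the problem to a strictly smaller instance. The base case $k = 1$ is ordinary linear regression with standard Gaussian covariates and independent standard Gaussian noise, which is trivially identifiable from $\Phi$.

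\textbf{Probing along a direction.} Fix a unit vector $\vec{v} \in \R^d$. From $\Phi_{\vec{W}^*}$ one reads off the conditional law of $y$ given $\vec{x}$ at any point, in particular at $\vec{x} = t\vec{v}$ for any $t \in \R$ (via standard disintegration, or as a limit over thin cones around $\vec{v}$). Under the model this conditional distribution has CDF
\[
  \mathbb{P}(y \le s \mid \vec{x} = t\vec{v}) \;=\; \prod_{j = 1}^{k} F_1\!\lr{s - t\, \vec{v}^\top \vec{w}^*_j},
\]
where $F_1$ is the standard normal CDF. Writing $y = \max_j(t\,\vec{v}^\top \vec{w}^*_j + \eps_j)$ with $\eps_j \sim \mathcal{N}(0,1)$ independent, for a large even $m$ the conditional moment $\E[y^{2m} \mid \vec{x} = t\vec{v}]$ is sandwiched between constants (depending only on $k,m$) times the $2m$-th moment of $\mathcal{N}\!\lr{0,\; t^{2}\,\max_{j}(\vec{v}^\top \vec{w}^*_j)^{2} + 1}$. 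Dividing by $t^{2m}$ and sending $t \to \infty$ therefore produces a functional of $\Phi_{\vec{W}^*}$, call it $g(\vec{v})$, that is a known strictly increasing function of $\max_{j}(\vec{v}^\top \vec{w}^*_j)^{2}$. Maximizing $g$ over the unit sphere yields a direction $\vec{v}_* = \pm\,\vec{w}^*_{i^*} / \norm{\vec{w}^*_{i^*}}$ aligned with a longest parameter vector; the magnitude $\norm{\vec{w}^*_{i^*}}$ is then recovered from $g(\vec{v}_*)$, and the sign is pinned down by the leading coefficient of $\E[y \mid \vec{x} = t\vec{v}_*]$ as $t \to \infty$.

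\textbf{Peeling.} Having identified $\vec{w}^*_{i^*}$, observe that the conditional CDF of $y$ given $\vec{x}$ factors as
\[
  F_1\!\lr{y - \vec{x}^\top \vec{w}^*_{i^*}} \cdot \prod_{j \ne i^*} F_1\!\lr{y - \vec{x}^\top \vec{w}^*_j},
\]
so dividing out the known first factor yields, pointwise in $(\vec{x}, y)$, the conditional CDF of $\max_{j \ne i^*} y_j$ given $\vec{x}$. Combined with the standard Gaussian marginal of $\vec{x}$, this is exactly $\Phi_{\vec{W}^* \setminus \{\vec{w}^*_{i^*}\}}$. The inductive hypothesis then identifies the remaining $k-1$ parameter vectors, completing the proof. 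The mapping $f$ is obtained by composing the probe-and-peel step $k$ times.

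\textbf{Main obstacle.} The technically most delicate point is extracting $\max_{j}(\vec{v}^\top \vec{w}^*_j)^{2}$ from conditional moments of $y$: one must verify that the relevant moment-based functional is well-defined as a limit, continuous in $\vec{v}$, and strictly peaked at the direction of a longest parameter vector so that the argmax is (up to sign) correct. Degenerate cases, such as several $\vec{w}^*_j$ sharing the maximum norm or some being zero, do not require separate treatment, since each iteration only needs a single maximizer rather than a unique one.
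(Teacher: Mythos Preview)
Your overall architecture—induction on $k$, isolate a longest $\vec{w}^*_j$ by a directional moment probe, then peel via the product structure of the conditional CDF—is the paper's, and your peeling step is essentially its shift-then-divide maneuver.

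There is, however, a genuine gap in the probing step. Conditioning on the \emph{point} $\vec{x}=t\vec{v}$ makes each $y_j\sim\mathcal{N}(t\alpha_j,1)$ with $\alpha_j=\vec{v}^\top\vec{w}^*_j$, so as $t\to+\infty$ the maximum is governed by the largest \emph{signed} $\alpha_j$ and $\E[y^{2m}\mid\vec{x}=t\vec{v}]/t^{2m}\to(\max_j\alpha_j)^{2m}$. Your claimed sandwich by the moments of $\mathcal{N}\!\bigl(0,\,t^2\max_j\alpha_j^2+1\bigr)$ is therefore false: with $\alpha_1=-10,\ \alpha_2=-1$ one has $y\approx -t$ and the limit is $1$, not $100^m$. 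The paper avoids this by conditioning instead on a thin \emph{tube} $\{\|\pproj{\vec{v}}\vec{x}\|\le\gamma\}$ around the whole line through $\vec{v}$; under that conditioning $\vec{v}^\top\vec{x}$ remains a standard Gaussian, each $y_j$ is genuinely zero-mean with variance $\alpha_j^2+1$, and the sandwich with $\max_j\alpha_j^2$ becomes correct. Your sign-recovery step inherits the same flaw: if $g$ were symmetric and happened to return $\vec{v}_*=-\vec{w}^*_{i^*}/\|\vec{w}^*_{i^*}\|$, the leading coefficient of $\E[y\mid\vec{x}=t\vec{v}_*]$ would be $\max_j\vec{v}_*^\top\vec{w}^*_j$, which in general equals neither $+\|\vec{w}^*_{i^*}\|$ nor $-\|\vec{w}^*_{i^*}\|$, so the test as stated does not pin down the sign.

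The repair is simple and in fact streamlines your route: use the first moment directly,
\[
h(\vec{v})\;=\;\lim_{t\to\infty}\frac{\E[\,y\mid\vec{x}=t\vec{v}\,]}{t}\;=\;\max_{j}\vec{v}^\top\vec{w}^*_j .
\]
Since $\max_{\|\vec{v}\|=1}h(\vec{v})=\max_j\|\vec{w}^*_j\|$ with every maximizer of the form $\vec{w}^*_{i^*}/\|\vec{w}^*_{i^*}\|$ for some longest $\vec{w}^*_{i^*}$, you recover both the magnitude and the correctly signed direction in a single step, and the remainder of your argument (peeling and induction) goes through unchanged.
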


\noindent In order to transform the above information-theoretic result to a 
finite-sample and finite-time bound in the unknown-index case, we need the 
following separability assumption for the $\vec{w}^*_j$'s.

\begin{assumption}[Separability Assumption] %
\label{as:no_index}
  For some known real values $B$, $\Delta$ it holds that:
\begin{equation*}
  \forall i \neq j: \frac{\abs*{{\vec{w}_i^{* \top} \vec{w}^*_j}}}{\norm{\vec{w}^*_j}} + \Delta \leq \norm{\vec{w}^*_j} \text{ and } \max_{j \in [k]} \norm{\vec{w}^*_j} \leq B.
  \tag{A} \label{eq:asump_no_index}
 \end{equation*}
\end{assumption}

\noindent Assumption \ref{as:no_index} allows us to design an algorithm 
that yields $\eps$-accurate estimates of the $\bm{w}_j$ vectors:
\begin{restatable}{theorem}{unknownind}
    \label{thm:no_index}
    Let $(\vec{x}^{(i)}, y^{(i)})_{i = 1}^n$ be $n$ observations from a
  self-selection setting with $k$ linear models $\vec{w}^*_1$, $\dots$,
  $\vec{w}^*_k$ as described in the unknown-index setting in Definition 
  \ref{defn:index_unobserved}. If we assume Assumption \ref{as:no_index}, then there exists an estimation algorithm that outputs a set of weights $\{\hat{\vec{w}}_i\}_{i = 1}^k$ and an ordering of these weights $\hat{\vec{w}}_1$, $\dots$, $\hat{\vec{w}}_k$ such that with
  probability at least $0.99$, for every $j \in [k]$
  \[ \norm{\hat{\vec{w}}_j - \vec{w}_j^*}_2 \le \eps,\] 
  as long as 
  $n \geq \poly (d) \cdot \exp \blr{\poly (B / \eps) \cdot \widetilde{O}(k)}$ and
  $\eps \le \Delta/16$. 
  Furthermore, the running time of the algorithm is at most 
  $n \cdot \exp \blr{\poly (B / \eps) \cdot \widetilde{O} (k)}$. 
\end{restatable}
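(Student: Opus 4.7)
The plan is to reduce the $d$-dimensional estimation problem to a $k$-dimensional one by first identifying a subspace containing $\mathrm{span}(\vec{w}_1^*, \ldots, \vec{w}_k^*)$, and then running a finite-sample, quantitative version of the identifiability argument of Theorem~\ref{thm:identifiability_no_index} restricted to that subspace. Throughout I would exploit Assumption~\ref{as:no_index}, which quantitatively separates each $\vec{w}_j^*$ from the span of the others and gives me the slack needed to translate information-theoretic statements into robust ones. The high-level pipeline is: (i) estimate $M = \E[y^2\,\vec{x}\vec{x}^\top]$ from samples and extract its top-$k$ eigenspace $\hat{U}$; (ii) prove that $U^\star := \mathrm{span}(\vec{w}_j^*) \subseteq U$, where $U$ is the population top-$k$ eigenspace, with a strict spectral gap, so that $\hat{U}$ approximates $U^\star$ by Davis--Kahan; (iii) carry out a peeling procedure inside $\hat{U}$ that recovers the longest remaining parameter vector at each step and then ``removes'' it from the observed distribution.

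\textbf{Main obstacle -- the spectral lower bound on $M$.} The hard part is step (ii). Because $y = \max_j (\vec{w}_j^{*\top}\vec{x} + \eps_j)$ is non-smooth, $M$ does not split into a weighted sum of rank-one terms, and standard Stein-type calculations do not directly apply. I would introduce the smooth surrogate $\mathrm{smax}_\tau(\vec{u}) = \tau \log \sum_{j} e^{u_j/\tau}$ and analyze $M_\tau = \E[\mathrm{smax}_\tau(\vec{y})^2\,\vec{x}\vec{x}^\top]$. Applying Gaussian integration by parts (Stein's lemma) in $\vec{x}$ and in the noise $\vec{\eps}$ produces terms of the form $\vec{w}_i^*(\vec{w}_j^*)^\top$ weighted by Gaussian expectations of first and second derivatives of $\mathrm{smax}_\tau$; regrouping these using convexity of $\mathrm{smax}_\tau$ and positivity of its second derivatives, I would aim to show the lower bound
\[
M_\tau \succeq \vec{I}_d + \sum_{j=1}^{k} c_j(\tau)\, \vec{w}_j^* (\vec{w}_j^*)^\top,
\]
with nonnegative coefficients $c_j(\tau)$ that are bounded away from zero using the separability gap $\Delta$ (separability is needed precisely to prevent one $\vec{w}_i^*$ from cancelling the contribution of another inside the Stein terms). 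A limiting argument as $\tau \to 0$, combined with uniform integrability, transfers the bound to $M$ itself, and because equality is achieved on the complement of $U^\star$ this yields the desired spectral gap. Concentration of $\hat M$ around $M$ with a sub-Gaussian-tail bound on $y$ then gives a guarantee of the form $\mathrm{dist}(\hat U, U^\star) \le \eps$ with $\poly(d, 1/\eps, B) \cdot e^{O(k)}$ samples, and projecting every $\vec{w}_j^*$ onto $\hat U$ costs at most $O(\eps)$ in error.

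\textbf{Finite-sample peeling inside $\hat{U}$.} Once restricted to the $k$-dimensional subspace $\hat{U}$, an $\eps'$-net of the unit sphere has size only $(1/\eps')^{O(k)}$, which is affordable. For each net direction $\vec{v} \in \hat U$, I would estimate the quantity $\max_i (\vec{v}^\top \vec{w}_i^*)^2 + 1$ by an appropriate empirical higher-order moment of $y$ conditioned on the projection of $\vec{x}$ onto $\vec{v}$ being large (and onto $\hat U^\perp$ being small), mirroring the sandwich inequality used in the proof of Theorem~\ref{thm:identifiability_no_index} -- the moments of $y$ along direction $\vec{v}$ are bracketed, up to constants, by the moments of $\mathcal{N}(0, \max_i (\vec{v}^\top \vec{w}_i^*)^2+1)$. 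The direction $\vec{v}^\star$ maximizing this empirical estimate approximates $\vec{w}_{j^\star}^*/\norm{\vec{w}_{j^\star}^*}$ for the longest parameter vector, whose magnitude is read off from the estimate itself; Assumption~\ref{as:no_index} (with the hypothesis $\eps \le \Delta/16$) guarantees that the maximizer is unique up to $\eps$ and that the recovered vector is $\eps$-close to $\vec{w}_{j^\star}^*$. I would then peel off $\vec{w}_{j^\star}^*$ by conditioning the observed distribution on the event that some other index wins the $\max$ (which is possible because separability ensures this event has nontrivial, albeit possibly exponentially small in $B/\eps$, probability), thereby reducing to a $(k-1)$-vector self-selection instance, and recurse. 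Per-direction moment concentration at accuracy $\eps$ costs $\exp(\poly(B/\eps))$ samples because of the high moment order and the exponentially small conditional survival probability, and multiplying by the net size $(1/\eps)^{O(k)}$ and by the $k$ peeling rounds gives the claimed $\poly(d) \cdot \exp\{\poly(B/\eps) \cdot \wt{O}(k)\}$ sample and time complexity.
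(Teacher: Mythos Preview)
Your subspace-identification step matches the paper closely: the paper also replaces $\max$ by a smooth surrogate, applies Stein's lemma twice, and obtains a lower bound of the shape you describe (with $\E[\max(0,y)^2]$ as the identity coefficient and the win-probabilities $p_j$ as the coefficients $c_j$), then lower-bounds $p_j$ via separability. One small slip: you propose conditioning on the projection of $\vec{x}$ onto $\hat U^\perp$ being small, but that is a $(d-k)$-dimensional constraint with probability decaying like $\rho^{d-k}$, which would destroy the polynomial-in-$d$ sample complexity. The correct conditioning (and what the paper does) is on the projection onto the orthogonal complement of $\vec{v}$ \emph{within} $\hat U$ being small; the directions in $\hat U^\perp$ are irrelevant once you know the $\vec{w}_j^*$ lie approximately in $\hat U$.

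The real gap is your peeling step. You say you will ``peel off $\vec{w}_{j^\star}^*$ by conditioning the observed distribution on the event that some other index wins the $\max$.'' But that event is unobservable in the unknown-index model---this is precisely the difficulty that distinguishes the setting from mixtures---so you cannot select or reweight samples based on it. The infinite-sample peeling in Theorem~\ref{thm:identifiability_no_index} does something different: after shifting by $\vec{w}_{j^\star}^*$ it uses the CDF identity $\Phi_{\vec{W}^\dagger}(S\times(-\infty,t]) = \Phi_{\vec{W}'}(S\times(-\infty,t])/\Phi(t)$, i.e.\ it divides out a standard-normal CDF rather than conditioning on an unobserved index. Turning that division into a finite-sample estimator for the conditional moments of the reduced $(k-1)$-model is a deconvolution step you have not supplied, and whose errors would compound across the $k$ rounds of recursion.

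The paper sidesteps recursion entirely in the finite-sample algorithm. It estimates the conditional moment $\E[y^l\mid A_{\vec{v}}]$ for \emph{every} net direction $\vec{v}$ simultaneously, then takes the set $\mathcal{S}$ of directions that are \emph{local} maxima of this estimate (over a ball of radius $\approx(\Delta/B)^2$ in the net). Separability is used to show that every element of $\mathcal{S}$ is close to some $\vec{w}_j^*/\norm{\vec{w}_j^*}$ and conversely that every $\vec{w}_j^*/\norm{\vec{w}_j^*}$ has a nearby element of $\mathcal{S}$; a second moment computation on the shifted variable $y-\hat\sigma_{\vec{v}}\,\vec{v}^\top\vec{x}$ resolves the sign ambiguity, and a clustering step removes duplicates. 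This recovers all $k$ vectors in one pass with no error accumulation, which is what makes the stated sample and time bounds go through.
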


\noindent Our estimation algorithm above, 
which we provide in Sections \ref{ssec:identifying_subspace}
and \ref{ssec:est_low_dim}, only makes sense for $\eps = 1/\poly(k)$ and 
resembles the corresponding results of mixtures of linear regressions 
\cite{li2018learning, chen2020learning}. What we are missing for this model is a 
local analysis corresponding to \cite{kwon2020converges} that will enable us to 
get error $\eps$ with running time and number of samples that are polynomial in
$1/\eps$. 

Finally, in the special yet very relevant case of $k = 2$, we are able to
improve our aforementioned result and show (in Section \ref{ssec:two_models}) that a moment-based algorithm
efficiently recovers $\vec{w}_1$ and $\vec{w}_2$ up to $\eps$ with running time
and number of samples that are polynomial in $1/\eps$.

\begin{restatable}{theorem}{unknownindtwo}
    \label{thm:no_index:2}
    Let $(\vec{x}^{(i)}, y^{(i)})_{i = 1}^n$ be $n$ observations from a
  self-selection setting with $2$ linear models $\vec{w}^*_1$, $\vec{w}^*_2$ as
  described in the unknown-index setting of Section \ref{sec:model}. If
  Assumption \ref{as:no_index} holds, then there
  exists an estimation algorithm that outputs $\hat{\vec{w}}_1$,
  $\hat{\vec{w}}_2$ such that with probability at least $0.99$, for every 
  $j \in [2]$,
  \[ \norm{\hat{\vec{w}}_j - \vec{w}_j^*}_2 \le \eps,\] 
  as long as $n \geq \poly (d, 1/\eps, B, 1/\Delta)$ and 
  $\eps \le \Delta/4$. Furthermore, the running time of the algorithm is at most 
  $n \cdot \poly (d, 1/\eps, B,  1/ \Delta)$. 
\end{restatable}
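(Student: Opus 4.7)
The plan is a method-of-moments approach that exploits the Gaussianity of $\vec{x}$ together with the classical closed form for the expectation of the max of two Gaussians. Setting $\mu_j := \vec{w}_j^{*\top}\vec{x}$ and $\tau := \mu_1-\mu_2$, we have $(y_1-y_2)\mid\vec{x} \sim \mathcal{N}(\tau,2)$, and the standard identity for the mean of the max of two independent Gaussians gives
\begin{equation*}
h(\vec{x}) := \E[y \mid \vec{x}] = \mu_1\,\Phi(\tau/\sqrt{2}) + \mu_2\,\Phi(-\tau/\sqrt{2}) + \sqrt{2}\,\phi(\tau/\sqrt{2}).
\end{equation*}
A direct computation yields the strikingly clean formulas $\partial h/\partial\mu_1 = \Phi(\tau/\sqrt{2})$, $\partial h/\partial\mu_2 = \Phi(-\tau/\sqrt{2})$, and all second partials in $(\mu_1,\mu_2)$ equal $\pm\phi(\tau/\sqrt{2})/\sqrt{2}$. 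Applying Stein's lemma to $h$ as a function of $\vec{x}$ gives $\E[y\vec{x}] = \E[\nabla_{\vec{x}}h] = \E[\Phi(\tau/\sqrt{2})]\,\vec{w}_1^* + \E[\Phi(-\tau/\sqrt{2})]\,\vec{w}_2^*$; since $\tau \sim \mathcal{N}(0,s^2)$ with $s := \|\vec{w}_1^*-\vec{w}_2^*\|$ is symmetric around zero, both expectations equal $1/2$, so $\E[y\vec{x}] = (\vec{w}_1^* + \vec{w}_2^*)/2$.

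The key structural fact is that the second Stein identity $\E[y(\vec{x}\vec{x}^\top - \vec{I})] = \E[\nabla^2_{\vec{x}}h]$ produces a rank-one matrix. Indeed, the sign pattern of the second partials combines (via the chain rule) into $\nabla^2_{\vec{x}} h = \tfrac{\phi(\tau/\sqrt{2})}{\sqrt{2}}(\vec{w}_1^*-\vec{w}_2^*)(\vec{w}_1^*-\vec{w}_2^*)^\top$, and integrating over the law of $\tau$ yields
\begin{equation*}
M := \E\!\left[y\,(\vec{x}\vec{x}^\top - \vec{I})\right] = c(s)\,(\vec{w}_1^*-\vec{w}_2^*)(\vec{w}_1^*-\vec{w}_2^*)^\top, \qquad c(s) := \frac{1}{\sqrt{2\pi(2+s^2)}}.
\end{equation*}
Hence the top unit eigenvector of $M$ is $\pm(\vec{w}_1^*-\vec{w}_2^*)/s$ and its top eigenvalue $c(s)s^2 = s^2/\sqrt{2\pi(2+s^2)}$ is a strictly increasing, Lipschitz-invertible function of $s$ on $[\sqrt{2}\Delta, 2B]$. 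The lower bound comes from applying Assumption \ref{as:no_index} symmetrically in $i,j$: this forces $|\vec{w}_1^{*\top}\vec{w}_2^*|$ to be bounded by $\min_j\|\vec{w}_j^*\|(\|\vec{w}_j^*\|-\Delta)$, which after expanding $\|\vec{w}_1^*-\vec{w}_2^*\|^2$ gives $s^2 \ge 2\Delta^2$; and $s \le 2B$ is trivial. Combined with the first moment, this identifies $(\vec{w}_1^*,\vec{w}_2^*)$ up to the unavoidable label swap.

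For the finite-sample version, I would form the empirical estimators $\hat{\vec{\mu}} := \tfrac{1}{n}\sum_i y^{(i)}\vec{x}^{(i)}$ and $\hat M := \tfrac{1}{n}\sum_i y^{(i)}(\vec{x}^{(i)}\vec{x}^{(i)\top}-\vec{I})$. Because the summands of $\hat M$ have sub-exponential tails ($|y|$ grows linearly with $\|\vec{x}\|$ and $\vec{x}\vec{x}^\top$ is itself sub-exponential), I would apply the standard truncate-then-concentrate recipe: restrict to $\|\vec{x}^{(i)}\|\le R = O(\sqrt{d\log n})$, bound the truncation bias via Gaussian tails, and apply matrix Bernstein to the bounded summands to obtain $\|\hat M - M\|_{\mathrm{op}} \le \eps'$ with $n = \poly(d,B,1/\eps')$. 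Davis--Kahan converts this into an $O(\eps'/(c(s)s^2))$-accurate top eigenvector, and Lipschitz inversion of $s\mapsto c(s)s^2$ (whose derivative is $\Theta(s/(2+s^2)^{3/2})$) turns the top eigenvalue estimate into an $O(\eps')$-accurate estimate of $s$. Picking $\eps' = \eps\cdot\poly(\Delta,1/B)$ and combining with $\hat{\vec{\mu}}$ yields $\|\hat{\vec{w}}_j - \vec{w}_j^*\|\le \eps$ from $n = \poly(d,1/\eps,B,1/\Delta)$ samples.

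The main technical obstacle is the composition of errors through the non-linear recovery pipeline: while each ingredient (truncated matrix Bernstein, Davis--Kahan, Lipschitz inversion of $c(\cdot)(\cdot)^2$) is standard, both the spectral gap of $M$ and the Lipschitz constant of the scalar inversion degenerate as $s\to 0$, so without Assumption \ref{as:no_index} the recovery would be ill-posed. The bulk of the work is therefore quantifying both the spectral gap and the inversion Lipschitz constant as $\poly(\Delta,1/B)$, and carefully threading the truncation radius, the matrix Bernstein deviation, and the eigenvector/eigenvalue perturbations so that their composition gives the advertised $\poly(d,1/\eps,B,1/\Delta)$ rate.
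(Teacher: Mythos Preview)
Your approach is correct and genuinely different from the paper's. The paper proceeds in two stages: it first identifies a two-dimensional subspace containing $\mathrm{span}(\vec{w}_1^*,\vec{w}_2^*)$ via the spectral method of Section~\ref{ssec:identifying_subspace}, then performs an $(\eps/6)$-grid search over that subspace; at each candidate $\hat{\vec{w}}$ it shifts the response to $y-\hat{\vec{w}}^\top\vec{x}$ and invokes a \textsc{MinVariance} subroutine that recovers $\{\sigma_1^2,\sigma_2^2\}$ from the second and fourth moments of the max of two centered Gaussians (using the MGF formula of Nadarajah), thereby estimating $\min_i\|\vec{w}_i^*-\hat{\vec{w}}\|$. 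You instead go straight to closed form via the first and second Stein identities, obtaining $\E[y\vec{x}]=(\vec{w}_1^*+\vec{w}_2^*)/2$ and the rank-one matrix $\E[y(\vec{x}\vec{x}^\top-\vec{I})]=c(s)(\vec{w}_1^*-\vec{w}_2^*)(\vec{w}_1^*-\vec{w}_2^*)^\top$, and then solve a simple spectral plus scalar-inversion problem. Your route is more direct---no separate subspace step and no grid search---and the rank-one structure makes the Davis--Kahan step particularly clean (the spectral gap is the top eigenvalue itself). The paper's route, by contrast, reuses the same subspace-plus-conditional-moment machinery that drives the general-$k$ result, so its $k=2$ proof is really a specialization of that pipeline. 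One minor slip: your derivative of $s\mapsto c(s)s^2$ should be $\tfrac{1}{\sqrt{2\pi}}\cdot\tfrac{s(4+s^2)}{(2+s^2)^{3/2}}$ rather than $\Theta(s/(2+s^2)^{3/2})$, but on $[\sqrt{2}\Delta,2B]$ this is still bounded below by $\mathrm{poly}(\Delta,1/B)$, so your inversion argument goes through unchanged.
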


\noindent In Section \ref{sec:unknown} we describe the algorithms and proofs for Theorem 
\ref{thm:no_index} and Theorem \ref{thm:no_index:2}.
\medskip

\noindent \textbf{Remark.} (High-Probability Results)
  All of the above results are expressed in term of constant probability of 
  error. We can boost this probability to $\delta$ by paying an additional 
  $\log(1/\delta)$ factor in the sample and time complexities. This boosting can 
  be done because we are solving a parametric problem and it is a folklore idea that
  any probability of error less that $1/2$ can be boosted to $\delta$. Roughly 
  the way that this boosting works is that we run the algorithm independently 
  $\log(1/\delta)$ times and from the $\log(1/\delta)$ different estimates we 
  keep one that contains at least half of all the others within a ball of radius 
  $2 \eps$.

    \section{Parameter Estimation for the Known-Index Setting} \label{sec:known}
      In this section we present and analyze our algorithm for estimating the
parameters $\{\vec{w}_k\}$ from samples distributed according to Definition
\ref{defn:index_observed}. At a high level, our approach is to run projected
stochastic gradient descent (SGD) on the appropriate objective function whose 
optima coincide with the true set of parameters. We start with the definition of 
this objective function, then we show the design of the appropriate projection set
and then we proceed with proving its main properties. We conclude with the proof
of our estimation theorem.

\subsection{Objective Function for Linear Regression with Self-Selection Bias} 
\label{sec:log_lik}
  
The objective function that we use is inspired by the log-likelihood function.
We show that our objective function is convex (even though linear regression with 
self-selection bias does not belong to any exponential family). Suppose we have a
given parameter estimate for the $[\vec{w}_j^*]_{j=1}^k$ given by 
$\bm{W} = [\vec{w}_j]_{j=1}^k$ then we define its objective value
$\overline{\ell}(\vec{W})$ as follows.
\begin{align}
    \overline{\ell}(\vec{W}) 
    &\triangleq 
    \frac{1}{n} \sum_{i = 1}^n 
        \mathbb{E}_{(y, j_*) \sim \mathcal{D}(\vec{x}^{(i)}; \vec{W}^*)} 
        \left[ \ell(\vec{W}; \vec{x}^{(i)}, y, j_*) \right]  \notag \\
    &\triangleq \frac{1}{n} \sum_{i = 1}^n \mathbb{E}_{(y, j_*)} \left[
        \log\lr{f_{\sigma}(y - \bm{w}_{j_*}^\top \vec{x}^{(i)})} + 
        \log\lr{\int_{C_{j_*}(y)} \prod_{j \neq j_*} 
        f_{\sigma}(\bm{z}_j - \vec{w}_j^\top \vec{x}^{(i)})\, d\bm{z}_{-j_*} }
    \right] \label{eq:objectiveDefinition},
\end{align}
where we recall that $f_\sigma$ is the density function of the standard normal
distribution. 
The above expression is based on the population
likelihood under the current estimate $\vec{W}$ of the pair $(y, j_*)$
conditioned on the value of $\vec{x}$: see Appendix
\ref{app:computationsLogLikelihood} for the exact derivation. 
The gradient of $\overline{\ell}$ can then be expressed in the following form: 
\begin{align}
    \label{eq:ll_grad_i:main}
    \nabla_{\vec{w}_j} \overline{\ell}(\vec{W}) &= 
    \frac{1}{n\sigma^2} \sum_{i = 1}^n \Exp_{(y, j_*)} \left[ 
    \bm{1}_{j = j_*}
    \cdot y +
    \bm{1}_{j \neq j_*} \cdot 
    \mathbb{E}_{\vec{z}_{-j_*} \sim \mathcal{N}((\vec{W}^\top x^{(i)})_{-j_*}, \sigma^2 \bm{I}_{k-1})}\left[z_j|\bm{z}_{-j_*} \in C_{j_*}(y)\right]
    - \vec{w}_{j}^\top \vec{x}^{(i)}
    \right] \vec{x}^{(i)}.
\end{align}
The first thing to verify is that the set of true parameters $\vec{W}^*$ are 
a stationary point of the objective function that we proposed above. The 
proof of the following lemma can be found in Appendix \ref{app:known}. 

\begin{lemma} \label{lem:known:stationary}
  It holds that $\nabla \overline{\ell}(\vec{W}^*) = 0$, where 
  $\bm{W}^* = [\bm{w}_j^*]_{j=1}^k$ is the set of true parameters 
  of the known-index self-selection model described in Definition
  \ref{defn:index_observed}. 
\end{lemma}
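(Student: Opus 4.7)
The plan is to show that, coordinate by coordinate, the bracket inside the expectation in \eqref{eq:ll_grad_i:main} vanishes at $\vec{W} = \vec{W}^*$ by a direct tower/pull-out argument that exploits the following key identification: under the true parameters, the conditional distribution of the \emph{unobserved} latent variables given the observed pair $(y, j_*)$ is exactly the truncated Gaussian that appears inside the gradient expression.

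Fix $i \in [n]$ and abbreviate $\vec{\mu}^* \triangleq \vec{W}^{*\top}\vec{x}^{(i)} \in \R^k$. Sampling $(y, j_*) \sim \mathcal{D}(\vec{x}^{(i)}; \vec{W}^*)$ amounts to drawing $\vec{y} \sim \mathcal{N}(\vec{\mu}^*, \sigma^2 \vec{I}_k)$, setting $j_* = S(\vec{y})$, and revealing $y = y_{j_*}$. By definition of the slice $C_{j_*}(\cdot)$ in Definition~\ref{def:selfSelectionRule}, the event $\{S(\vec{y}) = j_*,\ y_{j_*} = y\}$ is exactly the event $\{y_{j_*} = y,\ \vec{y}_{-j_*} \in C_{j_*}(y)\}$. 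Since the coordinates of $\vec{y}$ are independent under $\mathcal{N}(\vec{\mu}^*, \sigma^2 \vec{I}_k)$, standard conditioning gives
\begin{equation*}
    \vec{y}_{-j_*} \;\big|\; (j_*, y_{j_*} = y) \;\sim\; \mathcal{N}\!\bigl(\vec{\mu}^*_{-j_*}, \sigma^2 \vec{I}_{k-1};\, C_{j_*}(y)\bigr),
\end{equation*}
which is precisely the distribution of $\vec{z}_{-j_*}$ appearing in \eqref{eq:ll_grad_i:main} evaluated at $\vec{W} = \vec{W}^*$.

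Using this identification, I would split the bracket according to whether $j = j_*$. For the $j = j_*$ term, $\Exp_{(y,j_*)}[\bm{1}_{j=j_*}\, y] = \Exp[\bm{1}_{j=j_*}\, y_j]$ directly, since $y = y_{j_*} = y_j$ on that event. For the $j \neq j_*$ term, the inner conditional expectation equals $\Exp[y_j \mid j_*, y_{j_*}]$ by the previous paragraph, so the tower property yields
\begin{equation*}
    \Exp_{(y,j_*)}\!\bigl[\bm{1}_{j \neq j_*} \cdot \Exp[y_j \mid j_*, y_{j_*}]\bigr] = \Exp\bigl[\bm{1}_{j \neq j_*}\, y_j\bigr].
\end{equation*}
Adding the two contributions gives $\Exp[y_j] = \vec{w}_j^{*\top}\vec{x}^{(i)}$, which exactly cancels the $-\vec{w}_j^{*\top}\vec{x}^{(i)}$ in the bracket. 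Hence the summand for each $i$ vanishes, so $\nabla_{\vec{w}_j} \overline{\ell}(\vec{W}^*) = 0$ for every $j \in [k]$.

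The only subtle point is step two, the identification of the conditional law of $\vec{y}_{-j_*}$ given $(j_*, y_{j_*})$ as a truncated product Gaussian; this relies on the fact that the Gaussian coordinates are independent and that the events $\{S(\vec{y})=j_*\}$ depend on $\vec{y}_{-j_*}$ only through membership in $C_{j_*}(y_{j_*})$. Once this is in place, everything else is a one-line tower-property calculation. No measure-theoretic obstacles arise because $y_{j_*}$ has a density; the conditional distribution is well defined for almost every $y$, which suffices for the outer expectation.
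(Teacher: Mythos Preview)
Your argument is correct and is essentially the same as the paper's: both rest on the observation that, at $\vec{W}=\vec{W}^*$, the truncated Gaussian appearing inside the gradient is precisely the conditional law of the unobserved latent coordinate $y_j$ given $(j_*,y_{j_*})$, after which the tower property collapses the bracket to $\Exp[y_j]-\vec{w}_j^{*\top}\vec{x}^{(i)}=0$. The paper organizes the same calculation slightly differently---it first subtracts off $\Exp[y_j-\vec{w}_j^{*\top}\vec{x}]=0$ and then shows the residual $\Exp[\bm{1}_{j\neq j_*}(y_j-\Exp_{\vec{z}_{-j_*}}[z_j\mid C_{j_*}(y)])]$ vanishes by conditioning on $(y_l,j_*)$---but the content is identical, and your single tower step is a cleaner packaging of the same idea.
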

\begin{proof}
  See Appendix \ref{app:known:stationary}.
\end{proof}

Our goal is to apply projected stochastic gradient descent (PSGD) on
$\overline{\ell}$. To this end, we need to prove that our objective function is
actually strongly concave and hence the optimum of $\overline{\ell}$ is unique and
equal to $\vec{W}^*$. We show this strong convexity in Section
\ref{sec:convexity}. Next, we need to show that we actually apply PSGD and hence
need to find a procedure to sample unbiased estimates of the gradient of
$\bar{\ell}$. Unfortunately the form of the objective function does not allow us
to find such an efficient procedure. For this reason we relax our requirement to
finding approximately unbiased estimates of $\nabla \bar{\ell}$. To achieve this
we use a projected version of Langevin dynamics as we show in Section 
\ref{sec:Langevin}. Additionally, we need to show that the second moment of our 
gradient estimates cannot be very large which we also show in Section 
\ref{sec:Langevin}. Finally, we need to adapt the proof of convergence of PSGD 
to show that the small bias that Langevin dynamics introduces can be controlled 
in a way that does not severely affect the quality of the output estimation 
which we show in Section \ref{sec:biased_sgd}. In Section 
\ref{sec:known:main:proof} we combine everything together to prove our 
estimation result.

\subsection{Strong Concavity} \label{sec:convexity}

The Hessian of $\overline{\ell}$ is difficult to analyze directly. We thus start
with the Hessian of the log-likelihood for a single sample $(\vec{x}^{(i)},
y^{(i)}, j_*^{(i)})$. In particular, in Appendix
\ref{app:computationsLogLikelihood} we derive the Hessian of this function
$\ell(\vec{W}; \vec{x}^{(i)}, y^{(i)}, j_*^{(i)})$, 
which comprises blocks $\bm{H}_{jl}$ such that 
\[ (\bm{H}_{j,l})_{ab} = \ddt{(\bm{w}_l)_a}{(\bm{w}_j)_b} \ell(\bm{W};
\vec{x}, y, j_*). \]
Following the computation in Appendix \ref{app:computationsLogLikelihood}, it
follows that for a single sample $\vec{x}, y, j_*$, the matrix block 
$\bm{H}_{j,j_*} = 0$ for all $j \neq j_*$. Thus, it remains to consider only the
blocks $\bm{H}_{j_*,j_*}$ and $\bm{H}_{j,l}$ for $j,l \neq j_*$. In Appendix 
\ref{app:computationsLogLikelihood} we show that
\begin{align*}
  \bm{H}_{j_*,j_*} = -\frac{1}{\sigma^2}\vec{x}\vec{x}^\top
\end{align*}
and that
\begin{equation*}
  \bm{H}_{\vec{W}_{-j_*}} = \frac{1}{\sigma^4} \lr{
    \text{Cov}_{\bm{z}_{-j_*} \sim \mathcal{N}((\bm{W}^\top \vec{x})_{-j_*},\, \sigma^2 \bm{I}_{k-1})}
        \left[z_j, z_l\,|\, \bm{z}_{-j_*} \in C_{j_*}(y)\right]
    - \sigma^2 \bm{I}
  } \otimes \vec{x}\vec{x}^\top,
\end{equation*}
where $\otimes$ represents the Kronecker product. Now, the key property of
convex-inducing selection functions in our proof is that, for Gaussian random
variables over $\mathbb{R}^{k-1}$, the variance is non-increasing when the
variable is restricted to a convex set.

\begin{lemma}[Corollary 2.1 of \citep{kanter1977reduction}]
    Let $\vec{X} \in \mathbb{R}^n$ be a random vector with Gaussian density 
  $f_{\vec{X}}$. For a convex set $A \subseteq \mathbb{R}^n$ with positive mass
  under the distribution of $\vec{X}$, define $\vec{X}_A$ to be $X$ restricted to
  $A$, i.e., a random variable with density 
  $f_{\vec{X}_A}(\vec{x}) = f_{\vec{X}}(\vec{x}) \cdot (\int_{A} f_X(\vec{z})\, d\vec{z})^{-1}$.
  Then, for all $\vec{v} \in \mathbb{R}^n$, 
  \[ \text{Var}[\vec{v}^\top \vec{X}_A] \leq \text{Var}[\vec{v}^\top \vec{X}]. \]
\end{lemma}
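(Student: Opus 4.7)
The plan is to reduce to the isotropic case and then invoke the Brascamp--Lieb variance inequality for strongly log-concave measures, treating the restriction to the convex set as the addition of a (non-smooth) convex indicator to the log-density.

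First I whiten: write $\vec{X} = \vec{\mu} + \vec{\Sigma}^{1/2}\vec{Z}$ with $\vec{Z}\sim\mathcal{N}(\vec{0},\vec{I}_n)$ and set $A' := \vec{\Sigma}^{-1/2}(A-\vec{\mu})$, which is again convex. Then $\vec{X}_A$ has the same law as $\vec{\mu}+\vec{\Sigma}^{1/2}\vec{Z}_{A'}$, so setting $\vec{u}:=\vec{\Sigma}^{1/2}\vec{v}$ reduces the claim to
$$\Var\bigl[\vec{u}^\top \vec{Z}_{A'}\bigr]\le\|\vec{u}\|^2,$$
since $\|\vec{u}\|^2=\vec{v}^\top\vec{\Sigma}\vec{v}=\Var[\vec{v}^\top\vec{X}]$.

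Next I observe that $\vec{Z}_{A'}$ has density proportional to $e^{-\Phi}$ with $\Phi(\vec{z})=\tfrac{1}{2}\|\vec{z}\|^2+\iota_{A'}(\vec{z})$, where $\iota_{A'}$ is the convex indicator of $A'$. Morally $\nabla^2\Phi\succeq\vec{I}_n$ on all of $\mathbb{R}^n$: the quadratic contributes $\vec{I}_n$ and the convex indicator has a nonnegative distributional Hessian. Consequently the Brascamp--Lieb variance inequality applied to the linear test function $\ell(\vec{z})=\vec{u}^\top\vec{z}$ yields
$$\Var\bigl[\vec{u}^\top\vec{Z}_{A'}\bigr]\le\mathbb{E}\bigl[\vec{u}^\top(\nabla^2\Phi)^{-1}\vec{u}\bigr]\le\vec{u}^\top\vec{I}_n^{-1}\vec{u}=\|\vec{u}\|^2,$$
which is exactly what is needed.

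The main obstacle is that $\iota_{A'}$ is not $C^2$, so Brascamp--Lieb cannot be applied verbatim. I would handle this via a standard approximation argument: take a sequence of $C^2$ convex surrogates $\psi_\varepsilon\nearrow\iota_{A'}$ (for instance, Moreau envelopes of $\iota_{A'}$ with parameter $\varepsilon$, further regularized by a smooth convex mollifier so that the Hessian is continuous and nonnegative). For each $\varepsilon$ the smoothed measure with density proportional to $e^{-\tfrac{1}{2}\|\vec{z}\|^2-\psi_\varepsilon(\vec{z})}$ satisfies the hypotheses of Brascamp--Lieb with Hessian lower bound $\vec{I}_n$, giving the bound $\|\vec{u}\|^2$ uniformly in $\varepsilon$; I then pass $\varepsilon\to 0$ and use Gaussian-tail domination to interchange the limit with the first and second moments. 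A clean alternative that avoids approximation altogether is Caffarelli's contraction theorem, which provides a $1$-Lipschitz Brenier map $T$ pushing $\mathcal{N}(\vec{0},\vec{I}_n)$ onto the Gaussian restricted to $A'$; since $\vec{Z}_{A'}\stackrel{d}{=}T(\vec{Z})$, the bound $\Var[\vec{u}^\top T(\vec{Z})]\le\mathbb{E}[\|DT^\top\vec{u}\|^2]\le\|\vec{u}\|^2$ follows immediately from Gaussian Poincar\'e.
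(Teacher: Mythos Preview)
The paper does not prove this lemma; it is simply quoted as Corollary~2.1 of Kanter (1977) and used as a black box. So there is no in-paper proof to compare against---your proposal supplies an argument where the paper gives none.

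Your argument is sound and uses more modern machinery than Kanter's original. The whitening reduction to the isotropic case is correct, and both routes you sketch are valid ways to finish. The Brascamp--Lieb route is morally right but, as you note, requires an approximation step to handle the non-smooth indicator; your outline (smooth convex surrogates $\psi_\varepsilon \nearrow \iota_{A'}$, uniform bound, pass to the limit via dominated convergence) is standard and would go through, though the Moreau envelope alone is only $C^{1,1}$, so the extra mollification you mention is genuinely needed. The Caffarelli contraction alternative is cleaner and avoids approximation at the cost of invoking a deeper theorem; the application of Gaussian Poincar\'e to $\vec{z}\mapsto \vec{u}^\top T(\vec{z})$ combined with $\|DT\|_{\mathrm{op}}\le 1$ is a clean one-line finish. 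Either way you arrive at exactly the inequality the paper needs, and your proof is self-contained where the paper's is a citation.
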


In particular, together with our thickness assumption and properties of the
Kronecker product, this implies that 
$\bm{H}_{\vec{W}_{-j_*}} \preceq 0$. Thus, the complete Hessian of the 
function $\ell$ can be expressed as a block matrix of the form:
\[ \bm{H} 
    = \left[\begin{matrix}
        -\frac{1}{\sigma^2} \vec{x}\vec{x}^\top & \bm{0} \in \mathbb{R}^{d \times d(k-1)} \\
        \bm{0} \in \mathbb{R}^{d(k-1) \times d} & \bm{H}_{\vec{W}_{-j_*}}
    \end{matrix}\right]
    \preceq \left[\begin{matrix}
        -\frac{1}{\sigma^2} \vec{x}\vec{x}^\top & \bm{0} \\
        \bm{0} & \bm{0}
    \end{matrix}\right].
\]
We are now ready to upper bound the Hessian $\bm{H}_{pop}$ of our objective
function $\bar{\ell}$. 
In particular, 
at this point we can use our minimum-probability assumption 
(Assumption \ref{asp:known:1}) and our thickness of covariates assumption
(Assumption \ref{asp:known:2}) from which we get that for the Hessian
$\bm{H}_{pop}$ it holds that 
\[ \bm{H}_{pop} \preceq -\frac{\alpha}{\sigma^2 \cdot k} \bm{I}. \]
From the above we conclude that the following lemma

\begin{lemma} \label{lem:strongConcavity}
    The objective function $\bar{\ell}$ is 
  $\left(\frac{\alpha}{\sigma^2 \cdot k}\right)$-strongly-concave.
\end{lemma}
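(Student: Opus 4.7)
The plan is to upgrade the pointwise block-structural bound on the single-sample Hessian $\bm{H}$ already derived above into a uniform spectral upper bound on the population Hessian $\bm{H}_{pop} = \nabla^2 \overline{\ell}(\bm{W})$. The argument will proceed in three layers that I would present in this order: first a pointwise bound on the quadratic form $\bm{v}^\top \bm{H} \bm{v}$ at a fixed sample $(\bm{x}, y, j_*)$, then an expectation step over $(y, j_*)$ that uses the survival probability lower bound, and finally an averaging step over the $n$ observed covariates that uses the thickness assumption.

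For the first layer, I would fix an arbitrary test direction $\bm{v} = (\bm{v}_1, \ldots, \bm{v}_k) \in \R^{dk}$. Since $\bm{H}_{j, j_*} = 0$ for every $j \neq j_*$, the single-sample Hessian splits as a direct sum across the $\bm{w}_{j_*}$-coordinates and the $\bm{w}_{-j_*}$-coordinates; combining $\bm{H}_{j_*, j_*} = -\frac{1}{\sigma^2}\bm{x}\bm{x}^\top$ with the Kanter-based bound $\bm{H}_{\bm{W}_{-j_*}} \preceq 0$ yields
\[
\bm{v}^\top \bm{H} \bm{v} \;\leq\; -\frac{1}{\sigma^2}(\bm{v}_{j_*}^\top \bm{x})^2.
\]
Taking expectation over $(y, j_*) \sim \mathcal{D}(\bm{x}^{(i)}; \bm{W}^*)$ and invoking $\Pr[j_* = j] \geq \alpha/k$ from Assumption~\ref{asp:known:2}(i) gives $\Exp[\bm{v}^\top \bm{H} \bm{v}] \leq -\frac{\alpha}{\sigma^2 k}\sum_{j=1}^k (\bm{v}_j^\top \bm{x}^{(i)})^2$. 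Averaging this over $i = 1, \ldots, n$ and using the thickness condition $\frac{1}{n}\sum_i \bm{x}^{(i)}\bm{x}^{(i)\top} \succeq \bm{I}$ from Assumption~\ref{asp:known:1} then yields
\[
\bm{v}^\top \bm{H}_{pop} \bm{v} \;\leq\; -\frac{\alpha}{\sigma^2 k}\sum_{j=1}^k \bm{v}_j^\top \lr{\frac{1}{n}\sum_i \bm{x}^{(i)}\bm{x}^{(i)\top}} \bm{v}_j \;\leq\; -\frac{\alpha}{\sigma^2 k}\norm{\bm{v}}^2,
\]
which is the claimed strong concavity, since $\bm{v}$ was arbitrary.

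The main subtlety worth checking is the interchange in the first layer: one must verify that $\bm{H}_{\bm{W}_{-j_*}} \preceq 0$ really holds in operator order for arbitrary test vectors and not merely on directions aligned with a single coordinate block. This is precisely what Kanter's lemma supplies through the Kronecker structure $(\mathrm{Cov}[\,\cdot\,] - \sigma^2 \bm{I}_{k-1}) \otimes \bm{x}\bm{x}^\top$: the conditional covariance of the Gaussian restricted to the convex slice $C_{j_*}(y)$ is dominated in the Loewner order by the unconditional covariance $\sigma^2 \bm{I}_{k-1}$, which is exactly where the convex-inducing property of the selection rule is essential. Once this is in place, the three layers combine without further obstacles, and a second pointwise check that $\Pr[j_* = j] \geq \alpha/k$ is available for every observed $\bm{x}^{(i)}$ (not just on average) makes the expectation step go through uniformly.
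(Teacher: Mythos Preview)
Your proposal is correct and follows essentially the same approach as the paper: bound the single-sample Hessian via the block decomposition and Kanter's variance-reduction lemma, then use the survival-probability lower bound $\Pr[j_* = j] \geq \alpha/k$ when taking the expectation over $(y,j_*)$, and finally invoke the covariance thickness $\tfrac{1}{n}\sum_i \bm{x}^{(i)}\bm{x}^{(i)\top}\succeq \bm{I}$ when averaging over samples. The paper states this last combination tersely in one line, whereas you spell out the three layers and the quadratic-form argument explicitly; the content is the same.
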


\subsection{Approximate Stochastic Gradient Estimation} \label{sec:Langevin}
In this section we describe an algorithm for sampling approximate stochastic 
estimates of our objective function $\bar{\ell}$. Our algorithm is based on 
projected Langevin dynamics. We start with the expression of the gradient of
$\bar{\ell}$ based on \eqref{eq:ll_grad_i:main}. 
\begin{align}
    \label{eq:ll_grad_i:main:2}
    \nabla_{\vec{w}_j} \overline{\ell}(\vec{W}) &= 
       \frac{1}{n\sigma^2} \sum_{i = 1}^n \Exp_{(y, j_*)} \left[ 
       \bm{1}_{j = j_*}
       \cdot y +
       \bm{1}_{j \neq j_*} \cdot 
       \mathbb{E}_{\vec{z}_{-j_*} \sim \mathcal{N}((\vec{W}^\top x^{(i)})_{-j_*}, \sigma^2 \bm{I}_{k-1})}\left[z_j|\bm{z}_{-j_*} \in C_{j_*}(y)\right]
       - \vec{w}_{j}^\top \vec{x}^{(i)}
       \right] \vec{x}^{(i)}.
\end{align}
where we remind that $\mathbb{E}_{(y,j^*)}$ denotes the expectation of the pair
$(y^{(i)}, j_*^{(i)})$ conditioned on $\vec{x}^{(i)}$ and $\vec{W}^*$.
To obtain stochastic gradient estimates, we will replace these expectations 
with their 
corresponding observed values as we will see below. The more difficult step of
the gradient estimation process is 
sampling the last term of \eqref{eq:ll_grad_i:main:2}, for which 
it suffices to be able to sample the truncated normal distribution 
\begin{align}
  \mathcal{N}((\vec{W}^\top x^{(i)})_{-j_*}, \sigma^2 \bm{I}_{k-1};\ C_{j_*}(y)) 
  \label{eq:difficultTermToSample}
\end{align}
given some set of parameter $\vec{W}$, a vector of covariates 
$\vec{x}^{(i)}$ and a pair $(y, j_*)$ drawn from 
$\mathcal{D}(\vec{x}^{(i)}; \vec{W}^*)$. The simplest way to get a sample from 
\eqref{eq:difficultTermToSample} is to first sample from 
$\mathcal{N}((\vec{W}^\top x^{(i)})_{-j_*}, \sigma^2 \bm{I}_{k-1}),$ 
and then
apply rejection sampling until we get a sample inside $C_{j_*}(y)$.
This is feasible
information-theoretically but it might require a lot of computational steps if 
the survival probability of $C_{j_*}(y)$ is small. In particular, the rejection 
sampling might require time that is exponential in the norm of the $\vec{W}_j$'s.
For this reason, if we require statistical efficiency we need to apply a more 
elaborate technique. In particular, we use projected Langevin 
dynamics. Let $K = \slice_{j_*}(y) \cap \mathcal{B}(R)$ for some sufficiently large 
constant $R$ and let $\vec{\mu}_{-j_*} = (\vec{W}^\top \vec{x}^{(i)})_{-j_*}$ 
for the rest of this section. The iteration of projected Langevin algorithm for
sampling 
is the following \cite{bubecksampling}:
\begin{align}
  \vec{z}^{(t + 1)} = \Pi_K\left(\vec{z}^{(t)} - \frac{\gamma}{2 \cdot \sigma^2} 
  (\vec{z}^{(t)} - \vec{\mu}_{-j_*}) + \sqrt{\gamma} \cdot \vec{\xi}^{(t)} \right) \label{eq:projectedLangevin}
\end{align}
where $\vec{\xi}^{(1)}, \vec{\xi}^{(2)}, \dots$ are i.i.d. samples from the 
standard normal distribution in $(k - 1)$-dimensions. The next lemma describes 
the sampling guarantees of the Langevin iteration \eqref{eq:projectedLangevin}.

\begin{lemma} \label{lem:samp_trunc}
    Let $L \subset \R^{(k - 1)}$ be a convex set with 
  $\mathbb{P}_{\mathcal{N}(\vec{0}, \sigma^2 \vec{I})}(L) \geq \alpha$ for $\alpha > 0$.
  Then, for any $\vec{\mu}_{-j_*} \in \R^{k - 1}$ and $\epsilon \in (0, 1/2]$, the 
  projected Langevin sampling algorithm \eqref{eq:projectedLangevin} with 
    $K = L \cap \mathcal{B}(R)$ for some appropriate value $R$, and initialized
    with $\bm{z}^{(0)} = \Pi_K(\bm{0}_{k-1})$, generates a random
  variable $\hat{\vec{X}} = \vec{z}^{(m)}$ satisfying 
    \begin{equation*}
        \mathrm{TV} \left(\widehat{\vec{X}}, \mathcal{N}(\vec{\mu}_{-j_*}, \sigma^2 \cdot \vec{I}_{k-1}, K)\right) \leq
        \epsilon
    \end{equation*}
  assuming that the number of steps $m$ is larger than 
  $\mathrm{poly} (k, \norm{\vec{w}}, 1 / \epsilon, 1 / \alpha, \sigma^2, 1/\sigma^2)$.
\end{lemma}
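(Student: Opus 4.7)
The plan is a two-step reduction: (i) approximate the target $\mathcal{N}(\vec{\mu}_{-j_*}, \sigma^2 \vec{I}_{k-1}, L)$ by its further truncation $\mathcal{N}(\vec{\mu}_{-j_*}, \sigma^2 \vec{I}_{k-1}, K)$ to the bounded convex body $K = L \cap \mathcal{B}(R)$ up to TV-error $\epsilon/2$, and (ii) invoke the mixing-time bound of Bubeck, Eldan and Lehec \citep{bubecksampling} for projected Langevin dynamics targeting the log-concave measure $\mathcal{N}(\vec{\mu}_{-j_*}, \sigma^2 \vec{I}_{k-1}, K)$ on $K$, driving TV-error to $\epsilon/2$ after polynomially many iterations. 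The triangle inequality then yields the lemma.

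For Step~(i), I would set $R = \poly(k, \|\vec{w}\|, \sigma, 1/\sigma, 1/\alpha, \log(1/\epsilon))$ so that the target puts at most $\epsilon/2$ mass outside $\mathcal{B}(R)$, which (using that the target is obtained by restricting to a subset) suffices to bound the TV distance between the conditionals on $L$ and on $K$ by $\epsilon/2$. Two ingredients suffice to make such a choice of $R$. First, the hypothesis $\mathbb{P}_{\mathcal{N}(\vec{0}, \sigma^2 \vec{I}_{k-1})}(L) \ge \alpha$ together with standard Gaussian tail bounds forces $L$ to intersect $\mathcal{B}(O(\sigma\sqrt{k + \log(1/\alpha)}))$, so the closest point of $L$ to $\vec{\mu}_{-j_*}$ lies at distance $\|\vec{\mu}_{-j_*}\| + O(\sigma\sqrt{k + \log(1/\alpha)})$ from the origin. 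Second, the Brascamp-Lieb concentration inequality for log-concave measures of covariance $\preceq \sigma^2 \vec{I}_{k-1}$ shows that $\mathcal{N}(\vec{\mu}_{-j_*}, \sigma^2 \vec{I}_{k-1}, L)$ concentrates within $O(\sigma\sqrt{k + \log(1/\epsilon)})$ of its mean. Combining these yields the desired tail bound.

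For Step~(ii), the target has quadratic potential $\|\cdot - \vec{\mu}_{-j_*}\|^2/(2\sigma^2)$, which is $(1/\sigma^2)$-strongly convex and $(1/\sigma^2)$-smooth, and is supported on a convex body of diameter at most $2R$. The iteration \eqref{eq:projectedLangevin} is exactly the projected Langevin scheme of \citet{bubecksampling}, whose main theorem yields TV-convergence to $\mathcal{N}(\vec{\mu}_{-j_*}, \sigma^2 \vec{I}_{k-1}, K)$ in a number of steps polynomial in the diameter, the dimension $k-1$, the smoothness and strong-convexity constants, and $1/\epsilon$. The initialization $\Pi_K(\vec{0})$ is a feasible cold start whose penalty in the mixing time is only logarithmic in the reciprocal volume of $K$, which is polynomially bounded via the mass lower bound from Step~(i). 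Plugging $R$ from Step~(i) into the BEL bound gives the advertised polynomial step count.

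The main obstacle is the bookkeeping in Step~(i): the hypothesis is stated under the centered Gaussian $\mathcal{N}(\vec{0}, \sigma^2 \vec{I}_{k-1})$, but the target is the shifted conditional Gaussian with mean $\vec{\mu}_{-j_*}$ possibly far from the origin, and a naive density-ratio argument between the two would lose an $\exp(\Theta(\|\vec{w}\|^2/\sigma^2))$ factor. The correct workaround is geometric: use the $\alpha$-mass of $L$ under the centered Gaussian to locate $L$, and thence the nearest point of $L$ to $\vec{\mu}_{-j_*}$, within a radius polynomial in $\|\vec{w}\|, \sigma, 1/\alpha$, and then apply Brascamp-Lieb concentration around that nearest point. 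With this in hand, the rest of the argument is a direct invocation of BEL with careful parameter tracking.
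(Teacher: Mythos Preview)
Your high-level plan (truncate $L$ to a bounded body $K$, then run BEL's projected Langevin on $K$) is exactly what the paper does, and your Brascamp--Lieb route for Step~(i) is a valid alternative to the paper's argument. Two remarks, however.

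First, you are missing a required ingredient for Step~(ii). BEL's projected Langevin theorem needs the convex body $K$ to contain a Euclidean ball of some radius $r$, and after rescaling so that this ball has unit radius the mixing time depends polynomially on the resulting diameter $R/r$. A mass or volume lower bound on $K$ does not suffice, and the cold-start penalty is not merely logarithmic in reciprocal volume as you assert. The paper handles this via a dedicated geometric lemma proved just before the proof of Lemma~\ref{lem:samp_trunc}: a convex set with centered Gaussian mass at least $\alpha$ has width $\Omega(\sigma\alpha)$ in every direction (since each one-dimensional marginal density is at most $1/(\sigma\sqrt{2\pi})$), and any bounded convex set with width at least $w$ in every direction contains a ball of radius $\Omega(w/k^2)$ about its barycenter. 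This is the piece that makes the rescaling before the BEL invocation go through; your proposal needs it too.

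Second, your dismissal of the density-ratio argument is a misdiagnosis. The paper in fact uses exactly the ``naive'' bound you warn against: it lower-bounds the $\vec{\mu}_{-j_*}$-shifted Gaussian mass of the truncated set by $\gamma = (\alpha/2)\exp\bigl(-O(\|\vec{\mu}_{-j_*}\|^2/\sigma^2)\bigr)$, and this exponential loss is harmless because $\gamma$ only enters the choice of the outer radius $R_2$ through $\sqrt{\log(1/(\gamma\epsilon))} = O(\|\vec{\mu}_{-j_*}\|/\sigma + \sqrt{\log(1/(\alpha\epsilon))})$, keeping the diameter (and hence the BEL mixing time) polynomial in $\|\vec{\mu}_{-j_*}\|$. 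Your Brascamp--Lieb alternative is cleaner and avoids even this detour, but the density-ratio route is not the obstacle you make it out to be.
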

\begin{proof}
The proof of this lemma can be found in Appendix \ref{app:lem:samp_trunc}.
\end{proof}
\smallskip

Now that we can sample from the distribution 
\eqref{eq:difficultTermToSample}, we can move to approximately estimating a 
stochastic gradient of $\overline{\ell}$. First, we sample uniformly $i \in [n]$a
uniformly at random, and we fix the corresponding $\vec{x}^{(i)}$. Then, we use 
the $i$-th sample from the true model to substitute in the pair $(y, j_*)$. 
Finally, we use the Langevin algorithm that we described above to sample 
\eqref{eq:difficultTermToSample}. Before moving to bounding the bias of our 
estimator there is one more thing that we need to take care of, and this is that
for every $\vec{x}^{(i)}$ we only have one sample of the pair $(y, j_*)$. Hence,
we need to make sure that during the execution of the algorithm while we pick 
the indices $i \in [n]$ uniformly at random we will never pick the same 
index $i$ twice. To ensure that we are going to require more samples than the
ones we need. 

Let $n$ be the total number of samples that we have and $T$ be total number of
samples that we need for our PSGD algorithm. A straightforward birthday paradox
calculation 
yields that the probability 
of sampling the same $i$ twice is at most $2 T^2/n$. Thus,
if we pick $n \ge 2 T^2/\zeta$, then the collision probability 
during the execution of the PSGD algorithm is at most $\zeta$.
\smallskip

We are now ready to put everything together in algorithm that 
describes our combined estimation procedure. The following lemma whose proof be
found in Appendix \ref{app:lem:samplingGradient} describes the performance
guarantees of the above estimation algorithm.

\begin{algorithm}[t]
    \caption{Approximate Stochastic Gradient Estimation Algorithm}\label{alg:langevin}
    \begin{algorithmic}[1]
    \Procedure{EstimateGradient}{$\vec{W}$}
        \State sample $i$ uniformly from $[n]$
        \State $K \gets C_{j_*^{(i)}}(y^{(i)}) \cap \mathcal{B}(R)$
        \State $\vec{\mu} \gets (\vec{W}^\top \vec{x}^{(i)})_{-j_*^{(i)}}$
        \State $\vec{z}^{(0)} \gets \Pi_K(\vec{0})$
        \For{$t = 1,\ldots, m$}
            \State sample $\vec{\xi}^{(t-1)}$ from $\mathcal{N}(\vec{0}, \vec{I})$
            \State  $\vec{z}^{(t)} \gets \Pi_{K}\left(\vec{z}^{(t-1)} - 
            \frac{\gamma}{2 \cdot \sigma^2} (\vec{z}^{(t-1)} - \vec{\mu}) + 
            \sqrt{\gamma} \cdot \vec{\xi}^{(t-1)} \right)$
        \EndFor
        \For{$j = 1,\ldots, k$}
        \State $\vec{g}_j \gets \frac{1}{\sigma^2} \left(
        \bm{1}_{j_*^{(i)} = j} \vec{y}^{(i)} + \bm{1}_{j_*^{(i)} \neq j} \cdot
        z^{(m)}_j
          - \vec{w}_j^\top \vec{x}^{(i)}
          \right) \cdot \vec{x}^{(i)}$
        \EndFor        
        \State \Return $\vec{g} = (\vec{g}_1, \dots, \vec{g}_k)$
    \EndProcedure
    \end{algorithmic}
\end{algorithm}

\begin{algorithm}[t]
    \caption{Projected Stochastic Gradient Descent}\label{alg:psgd}
    \begin{algorithmic}[1]
    \Procedure{PSGD}{}
       \State $\vec{W}^{(0)} \gets 0$
       \For{$t = 1, \ldots, T$}
          \State $\eta_t \gets 1/\lambda \cdot t$
          \State $\vec{g} \gets \textsc{EstimateGradient}(\vec{W}^{(t)})$
           \State $\vec{w}^{(t)}_j \gets \Pi_{\mathcal{K}}\lr{\vec{w}^{(t)}_j 
           - \eta_t \cdot \vec{g}^{(t)}_j}$ for all $j \in [k]$
       \EndFor
       \State \Return $\bar{\vec{W}} \triangleq \frac{1}{T} \sum_{t = 1}^{T} \left(\vec{w}^{(t)}_j\right)_{j = 1}^k$
    \EndProcedure
    \end{algorithmic}
\end{algorithm}

\begin{lemma} \label{lem:samplingGradient}
    Let $\vec{g}^{(1)}, \dots, \vec{g}^{(T)}$ be a sequence of outputs of 
  Algorithm \ref{alg:langevin} when used with input 
  $\vec{W}^{(1)}, \dots, \vec{W}^{(T)}$, where
  $\norm{\vec{W}^{(p)}}_2 \le k \cdot B$ and $\vec{W}^{(p)}$ can
  depend on $\vec{W}^{(p - 1)}, \vec{g}^{(p - 1)}$. If $n \ge 2 T^2/\zeta$,
  for the hyperparameters $\eta$, $R$, it holds that 
  $R, 1/\eta \le \mathrm{poly} (k, B, 1 / \beta, 1 / \alpha, \sigma^2, 1/\sigma^2)$, and $m \ge \mathrm{poly} (k, B, 1 / \beta, 1 / \alpha, \sigma^2, 1/\sigma^2)$
  then with probability at least $1 - \zeta$ it holds that for every $p \in [T]$
  \begin{align}
    \norm{\Exp \left[\vec{g}^{p} \mid \vec{W}^{(p - 1)}, \vec{g}^{(p - 1)} \right]  -  \nabla \bar{\ell}(\vec{W}^{(p)})}_2 \le \beta. \label{eq:boundOnBias}
  \end{align}
\end{lemma}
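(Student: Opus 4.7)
The plan is to decompose the deviation $\mathbb{E}[\vec{g}^{(p)} \mid \vec{W}^{(p-1)}, \vec{g}^{(p-1)}] - \nabla \bar{\ell}(\vec{W}^{(p)})$ into three sources of bias and control each by choosing $n$, $R$, $\eta$, and $m$ appropriately: (i) sample reuse across iterations, (ii) restricting the Langevin target to the ball $\mathcal{B}(R)$, and (iii) imperfect mixing of the projected Langevin chain.

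For (i), I first invoke the birthday argument already stated: with $n \ge 2T^2/\zeta$, a union bound guarantees that with probability at least $1-\zeta$ no two of the $T$ iterations use the same index $i$. On this event, the observation $(\vec{x}^{(i)},y^{(i)},j_*^{(i)})$ drawn at iteration $p$ is independent of $(\vec{W}^{(p-1)},\vec{g}^{(p-1)})$, so conditionally on $\vec{W}^{(p)}$ and on no collision, $(y^{(i)},j_*^{(i)}) \sim \mathcal{D}(\vec{x}^{(i)};\vec{W}^*)$ and the Langevin noise used to produce $\vec{z}^{(m)}$ is also independent of the past. Computing $\mathbb{E}[\vec{g}^{(p)} \mid \vec{W}^{(p)},\, \text{no collision}]$ term-by-term and matching against \eqref{eq:ll_grad_i:main:2}, the draws over $i$ and $(y,j_*)$ reproduce $\frac{1}{n}\sum_i \mathbb{E}_{(y,j_*)}$ exactly, the "observed" contributions $\bm{1}_{j=j_*^{(i)}} y^{(i)}$ and $-\vec{w}_j^\top \vec{x}^{(i)}$ match their counterparts, and the only remaining discrepancy lives in the one-dimensional expectation $\mathbb{E}[z_j^{(m)}]$ versus $\mathbb{E}_{\pi_\infty}[z_j]$, where $\pi_\infty \triangleq \mathcal{N}(\vec{\mu},\sigma^2 \vec{I}_{k-1};\, C_{j_*^{(i)}}(y^{(i)}))$ and $\vec{\mu} = (\vec{W}^{(p)\top}\vec{x}^{(i)})_{-j_*^{(i)}}$.

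For (ii) and (iii), I write $\pi_R$ for $\pi_\infty$ further restricted to $\mathcal{B}(R)$ and apply the triangle inequality
\begin{equation*}
\bigl|\mathbb{E}[z_j^{(m)}] - \mathbb{E}_{\pi_\infty}[z_j]\bigr| \;\le\; \bigl|\mathbb{E}[z_j^{(m)}] - \mathbb{E}_{\pi_R}[z_j]\bigr| + \bigl|\mathbb{E}_{\pi_R}[z_j] - \mathbb{E}_{\pi_\infty}[z_j]\bigr|.
\end{equation*}
The first term is bounded by Lemma \ref{lem:samp_trunc}: the projected Langevin chain reaches TV distance $\epsilon_{\mathrm{TV}}$ of $\pi_R$ after $m = \poly(k, B, 1/\epsilon_{\mathrm{TV}}, 1/\alpha', \sigma^2, 1/\sigma^2)$ steps, and since $|z_j|\le R$ on $K = C_{j_*}(y) \cap \mathcal{B}(R)$, this converts to an expectation gap of at most $2R\cdot \epsilon_{\mathrm{TV}}$. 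The second term is handled by standard Gaussian concentration: choosing $R$ of order $\poly(\sigma,\|\vec{\mu}\|,\log(1/\alpha),\log(1/\beta))$ forces $\pi_\infty(\mathbb{R}^{k-1}\setminus \mathcal{B}(R))$ below any target accuracy, and the sub-Gaussian tails of the truncated marginal then give a mean shift bounded by $\beta/(nC)$. Multiplying through by $\|\vec{x}^{(i)}\|_2 \le C$ (Assumption \ref{asp:known:1}) and summing over the $k$ coordinates of $j$ then gives the desired $\beta$-bound on the total bias.

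The main obstacle will be verifying that the survival mass $\alpha'$ appearing in Lemma \ref{lem:samp_trunc} remains polynomially bounded. Assumption \ref{asp:known:2}(ii) guarantees that $C_{j_*}(y)$ has mass at least $\alpha$ under the \emph{true} Gaussian $\mathcal{N}((\vec{W}^{*\top}\vec{x})_{-j_*},\sigma^2 \vec{I}_{k-1})$, but the Langevin chain runs under the mean $\vec{\mu}$ produced by the current iterate $\vec{W}^{(p)}$. Because $\|\vec{W}^{(p)}\|_2 \le kB$, $\|\vec{W}^*\|_2 \le B$, and $\|\vec{x}^{(i)}\|_2 \le C$, the two Gaussian means differ by at most $(k+1)BC$, so the Radon–Nikodym derivative between them is uniformly bounded on $\mathcal{B}(R)$ by a factor that is polynomial in $R,k,B,C,1/\sigma$; this transfers the $\alpha$-lower bound to the Langevin target at a polynomial multiplicative cost in $1/\alpha$, keeping $R$, $1/\eta$, and $m$ inside the polynomial window claimed by the lemma. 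Combining the three bias contributions on the no-collision event (which holds with probability $\ge 1-\zeta$) then yields the stated inequality \eqref{eq:boundOnBias} for every $p \in [T]$ simultaneously.
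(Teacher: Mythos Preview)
Your approach is essentially the same as the paper's: invoke the birthday bound for the no-collision event, reduce the bias to the gap between the Langevin output's mean and the truncated-Gaussian mean, and control that gap via Lemma~\ref{lem:samp_trunc} together with a ball-restriction argument. You are in fact more explicit than the paper about separating the ball-restriction error $|\mathbb{E}_{\pi_R}[z_j]-\mathbb{E}_{\pi_\infty}[z_j]|$ from the mixing error; the paper's proof is terser and handles this by citing tail bounds on truncated-Gaussian means from \cite{daskalakis2018efficient,daskalakis2019computationally}.

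Two corrections worth making. First, your Radon--Nikodym bound is not polynomial in $R$: the density ratio between two Gaussians with means $\Delta\mu$ apart is $\exp(O(R\norm{\Delta\mu}/\sigma^2))$ on $\mathcal{B}(R)$, which is exponential in $R$. The fix (implicit in the proof of Lemma~\ref{lem:samp_trunc}) is to first intersect $C_{j_*}(y)$ with a ball of radius $O(\sigma\sqrt{k}+\sigma\sqrt{\log(1/\alpha)}+BC)$, which still captures at least $\alpha/2$ of the mass; on that smaller ball the exponent is $\poly(k,B,C,1/\sigma,\log(1/\alpha))$, keeping the overall dependence polynomial. Second, note that Lemma~\ref{lem:samp_trunc} as stated requires the mass lower bound under the \emph{zero}-centered Gaussian $\mathcal{N}(\vec{0},\sigma^2\vec{I})$, not under the current-iterate center. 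So the mass transfer you actually need is from the $\vec{W}^*$-centered Gaussian (Assumption~\ref{asp:known:2}(ii)) to the zero-centered one; the lemma already absorbs the dependence on the arbitrary sampling center $\vec{\mu}$ into its $\poly(\norm{\vec{w}},\ldots)$ runtime bound. Your density-ratio argument adapts directly to that shift once the ball radius is chosen as above.
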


\subsection{Stochastic Gradient Descent with Biased Gradients}
\label{sec:biased_sgd}

  In the previous section we showed that we can compute approximate stochastic 
gradients of the strongly-concave function $\bar{\ell}$. In this section we show 
that this is enough to approximately optimize $\bar{\ell}$ using projected 
gradient descent. We start with a description of the PSGD algorithm.

\begin{lemma}%
    \label{lemma:sgd}
    Let $f: \mathbb{R}^k \to \mathbb{R}$ be a convex function, $\mathcal{K} \subset
    \mathbb{R}^k$ a convex set, and fix an initial estimate $w^{(0)} \in \mathcal{K}$.
    Now, let $\vec{x}^{(1)}, \ldots, \vec{x}^{(T)}$ be the iterates generated by 
    running $T$
    steps of projected SGD using gradient estimates 
    $\vec{g}^{(1)}, \ldots, \vec{g}^{(M)}$
    satisfying $\mathbb{E}[\vec{g}^{(i)} | \vec{x}^{(i-1)}] = 
    \nabla f(\vec{x}^{(i-1)}) + \vec{b}^{(i)}
    $. Let $\vec{x}_* = \arg\min_{\vec{x} \in
    \mathcal{K}} f(w)$ be a minimizer of $f$. Then, if we assume:
    \begin{enumerate}
        \item[(i)] \textbf{Bounded step variance:} 
        $\mathbb{E}\left[\|\vec{g}^{(i)}\|_2^2 \right] \leq \rho^2$,
        \item[(ii)] \textbf{Strong convexity:} 
        $f$ is $\lambda$-strongly convex, and 
        \item[(iii)] \textbf{Bounded gradient bias:}
        $\|\vec{b}^{(i)}\| \leq 
        \frac{\rho^2}{2\cdot \lambda\cdot \mathrm{diam}(\mathcal{K}) \cdot i},$
    \end{enumerate}
    then the average iterate $\hat{\vec{x}} = \frac{1}{T} \sum_{t=1}^T \vec{x}^{(t)}$
    satisfies 
    $\mathbb{E}[f(\hat{\vec{x}}) - f(\vec{x}_*)] 
    \leq \frac{\rho^2}{\lambda T}(1
    + \log(T))$. 
\end{lemma}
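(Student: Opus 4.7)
The plan is to adapt the classical projected SGD analysis for strongly convex objectives (with step sizes $\eta_t = 1/(\lambda t)$) to absorb the additional additive bias in the gradient estimates. The bias bound in hypothesis (iii) is precisely calibrated so that, after multiplying by the step size and bounding the inner product against $\mathrm{diam}(\mathcal{K})$, the bias contributes only a telescoping term of the same order as the variance term.

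First I would derive a one-step recursion. Using non-expansiveness of the projection onto $\mathcal{K}$,
\begin{equation*}
\|\vec{x}^{(t)} - \vec{x}_*\|^2 \leq \|\vec{x}^{(t-1)} - \eta_t \vec{g}^{(t)} - \vec{x}_*\|^2 = \|\vec{x}^{(t-1)} - \vec{x}_*\|^2 - 2\eta_t \langle \vec{g}^{(t)}, \vec{x}^{(t-1)} - \vec{x}_* \rangle + \eta_t^2 \|\vec{g}^{(t)}\|^2.
\end{equation*}
Taking conditional expectation given $\vec{x}^{(t-1)}$ and then total expectation, condition (i) gives $\mathbb{E}\|\vec{g}^{(t)}\|^2 \leq \rho^2$, and the bias decomposition gives $\mathbb{E}\langle \vec{g}^{(t)}, \vec{x}^{(t-1)} - \vec{x}_*\rangle = \mathbb{E}\langle \nabla f(\vec{x}^{(t-1)}), \vec{x}^{(t-1)} - \vec{x}_*\rangle + \mathbb{E}\langle \vec{b}^{(t)}, \vec{x}^{(t-1)} - \vec{x}_*\rangle$.

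Next I would invoke $\lambda$-strong convexity, $\langle \nabla f(\vec{x}^{(t-1)}), \vec{x}^{(t-1)} - \vec{x}_*\rangle \geq f(\vec{x}^{(t-1)}) - f(\vec{x}_*) + \tfrac{\lambda}{2}\|\vec{x}^{(t-1)} - \vec{x}_*\|^2$, and control the bias term by Cauchy--Schwarz: $|\langle \vec{b}^{(t)}, \vec{x}^{(t-1)} - \vec{x}_*\rangle| \leq \|\vec{b}^{(t)}\|\cdot \mathrm{diam}(\mathcal{K}) \leq \rho^2/(2\lambda t)$ by hypothesis (iii). Rearranging and setting $D_t = \mathbb{E}\|\vec{x}^{(t)} - \vec{x}_*\|^2$, with $\eta_t = 1/(\lambda t)$ the recursion becomes
\begin{equation*}
\mathbb{E}[f(\vec{x}^{(t-1)}) - f(\vec{x}_*)] \leq \Bigl(\tfrac{1}{2\eta_t} - \tfrac{\lambda}{2}\Bigr) D_{t-1} - \tfrac{1}{2\eta_t} D_t + \tfrac{\eta_t \rho^2}{2} + \tfrac{\rho^2}{2\lambda t}.
\end{equation*}
Substituting $1/(2\eta_t) - \lambda/2 = \lambda(t-1)/2$ and $1/(2\eta_t) = \lambda t/2$, the leading terms form the standard telescoping sum $\sum_t [\lambda(t-1)D_{t-1} - \lambda t D_t]/2$ which vanishes (with weighting), and the remaining terms contribute $\eta_t \rho^2/2 + \rho^2/(2\lambda t) = \rho^2/(\lambda t)$.

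Finally I would sum from $t=1$ to $T$, use $\sum_{t=1}^{T} 1/t \leq 1 + \log T$, and apply Jensen's inequality (by convexity of $f$) to $\hat{\vec{x}} = \tfrac{1}{T}\sum_t \vec{x}^{(t)}$ to conclude
\begin{equation*}
\mathbb{E}[f(\hat{\vec{x}}) - f(\vec{x}_*)] \leq \tfrac{1}{T}\sum_{t=1}^{T} \mathbb{E}[f(\vec{x}^{(t-1)}) - f(\vec{x}_*)] \leq \tfrac{\rho^2}{\lambda T}(1 + \log T).
\end{equation*}
The main obstacle is choosing the bias budget so it does not dominate: hypothesis (iii) is tight in the sense that $\eta_t \cdot \|\vec{b}^{(t)}\| \cdot \mathrm{diam}(\mathcal{K})$ scales as $\rho^2/(\lambda^2 t^2)$, whose sum converges, so after the averaging factor $1/T$ the bias contributes at most $O(\rho^2/(\lambda T))$, matching the variance term up to the $\log T$ factor. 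All other steps are routine manipulations of the classical Nemirovski/Bubeck-style analysis.
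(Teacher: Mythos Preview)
Your proposal is correct and follows essentially the same approach as the paper's proof: both adapt the standard projected SGD analysis for strongly convex objectives (as in Shalev-Shwartz--Ben-David, Theorem~14.11) with step sizes $\eta_t = 1/(\lambda t)$, bound the bias inner product via Cauchy--Schwarz against $\mathrm{diam}(\mathcal{K})$, telescope, and finish with Jensen. The only cosmetic difference is that the paper carries the bias term as $R\sum_t \|\vec b_t\|$ before invoking hypothesis~(iii), whereas you substitute the bound immediately; the resulting arithmetic is identical.
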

\begin{proof}
    See Appendix \ref{app:bias_sgd_proof}.
\end{proof}

\subsection{Proof of Theorem \ref{thm:knownIndex}}
\label{sec:known:main:proof}
We are now ready to combine the results of the previous sections into a recovery
guarantee for $\bm{W}^* = \{\vec{\vec{w}}_i\}_{i = 1}^k$. In particular, we will apply Lemma \ref{lemma:sgd}
to show that Algorithm \ref{alg:psgd} converges to an average iterate $\hat{\vec{W}}$ 
that is close to $\bm{W}^*$.
First, observe that the norm of the gradient estimates outputted by Lemma \ref{app:lem:samp_trunc}
are bounded in norm by
\begin{align*}
    \mathbb{E}\left[\|\vec{v}\|_2^2\right] \leq \mathbb{E}\left[
        \sum_{j=1}^k \left\| \nabla_{\vec{w}_j} \ell(\bm{W}; (\vec{x}, i, y)) \right\|^2
    \right] + \beta,
\end{align*}
where $\beta$ is as in Lemma \ref{lem:samplingGradient}.
Our bounds on the norm of the weights and covariates directly implies  
\[
    \mathbb{E}\left[\|\vec{v}\|_2^2\right] \in O\lr{k\cdot \text{poly}(B, C)} + \beta.
\] 
Next, Lemma \ref{lem:strongConcavity} guarantees that $f$ in Lemma \ref{lemma:sgd} is 
strongly convex with $\lambda = \alpha/(\sigma k)$.
Finally, Lemma \ref{lem:samp_trunc} ensures access to gradients with appropriately
bounded bias (i.e., satisfying assumptions (i) and (iii) in Lemma \ref{lemma:sgd}) 
in $\text{poly}(k, B, C, T, 1/\alpha, \sigma^2, 1/\sigma^2)$-time.
We are thus free to apply Lemma \ref{lemma:sgd} to our
problem---after averaging $T$ steps of projected stochastic gradient
descent, we will find $\bm{W} = \{\vec{w}_j\}_{j=1}^k$ such that
\begin{align}
    \label{eq:sgd_bound}
    \mathbb{E}[\overline{\ell}(\bm{W})] - \overline{\ell}(\bm{W}^*) 
        \leq \frac{\sigma^2 \cdot k^2\cdot \poly(B, C)}{2\cdot \alpha\cdot T} (1 + \log(T)).
\end{align}
An application of Markov's inequality shows that, with probability at least $1 -
\delta$, 
\[
    \overline{\ell}(\bm{W}) - \overline{\ell}(\bm{W}^*)
        \leq \frac{\sigma^2 \cdot k^2\cdot \poly(B, C, 1/\delta)}{2\cdot \alpha\cdot T} (1 + \log(T)).
\]
Thus, we can condition on the event in \eqref{eq:sgd_bound} while only losing a
factor of $1 - \delta$ in success probability. Finally, a parameter-space
recovery bound follows from another application of convexity:
\[
    \left\|\bm{W} - \bm{W}^* \right\|_F
        \leq \frac{\sigma^4 \cdot k^3\cdot \poly(B, C, 1/\delta)}{\alpha^2\cdot T} (1 + \log(T)).
\]

    \section{Parameter Estimation for the Unknown-index Setting} \label{sec:unknown}
    In this section, we establish Theorems \ref{thm:no_index} and \ref{thm:no_index:2}, our main results for parameter recovery from a self-selection model with unknown indices (Definition~\ref{defn:index_unobserved}). Note that in this setting, even information theoretic identifiability of the parameters (i.e. parameter identification given access to infinite samples from the model) is not known. Hence, we start our discussion with a simple identifiabilty proof in the information theoretic setting in the limit of infinite samples, Theorem~\ref{thm:identifiability_no_index}, in Subsection~\ref{ssec:identifiability}. We then expand on these ideas in Subsections \ref{ssec:identifying_subspace} and \ref{ssec:est_low_dim} to prove Theorem~\ref{thm:no_index}. A naive adaptation of our identifiability proof results in a runtime and sample complexity scaling exponentially in the dimension of the input points. Therefore, in Subsection~\ref{ssec:identifying_subspace}, we show how one can efficiently identify a $k$-dimensional containing the span of the weight vectors, $\{\vec{w}^*_i\}_{i = 1}^k$. While similar approaches based on effective subspace identification have also been employed for other statistical learning tasks such as that of learning mixtures of well-separated gaussians \citep{vempalawang}, our analysis is significantly more intricate as the moments of the distributions under consideration do not have an obvious closed-form expression. Having identified a suitable low-dimensional subspace, we then carry out a finite sample analysis of our information theoretic idenitifiability proof from Subsection~\ref{thm:identifiability_no_index} in Subsection~\ref{ssec:est_low_dim} to prove Theorem~\ref{thm:no_index}. Finally, in the special case where $k = 2$, we describe a procedure which enables parameter recovery with sample complexity and runtime scaling as $\mathrm{poly} (1 / \eps)$ improving on the $\exp (1 / \eps)$ sample complexity and runtime from Theorem~\ref{thm:no_index}, proving Theorem~\ref{thm:no_index:2} and concluding the section.

\subsection{Identifiability with Unknown Indices}
\label{ssec:identifiability}
Here, we establish the information theoretic identifiability of the self-selection model with unknown indices. Recall, that we receive samples generated according to $y^{(i)} = \max_{j \in [k]} \vec{w}_j^\top \vec{x}^{(i)} + \eta_j^{(i)}$ where $\vec{x}^{(i)} \thicksim \mathcal{N} (0, \vec{I})$ and $\eta^{(i)}_j \overset{iid}{\thicksim} \mathcal{N} (0, 1)$. We now establish the following theorem:

\unknownidentify*

\begin{proof}
    Our proof will be based on an inductive argument on the number of components, $k$. We will use a peeling argument to reduce the parameter recovery problem with $k$ components to one with $k - 1$ components. The base case when $k = 1$, reduces to standard linear regression where, for example, $\E [\vec{x}^{(i)} \cdot y^{(i)}] = \vec{w}_1$ suffices. For the inductive argument, suppose $k > 1$ and consider the following function:
    \begin{equation*}
        \forall \vec{v} \in \R^d, \norm{\vec{v}} = 1: F(\vec{v}) = \lim_{\text{Even } p \to \infty} \lim_{\gamma \to 0} \lr{\frac{\E \sqlr{y^p \mid \norm{\pproj{\vec{v}} \vec{x}} \leq \gamma}}{(p-1)!!}}^{1 / p},
    \end{equation*}
    where $\pproj{\vec{v}}$ is the projection matrix orthogonal to the direction of $\vec{v}$. We will now show that the above function is well defined for all $\norm{\vec{v}} = 1$. The conditional moments may be evaluated with access to the distribution function $\Phi_{\vec{W}}$. Defining $j^* = \argmax_{j \in [k]} \abs{\vec{v}^\top \vec{w}_j}$ and $\sigma_j = \abs{\vec{v}^\top \vec{w}_{j}}$, we now lower bound the conditional moment:
    \begin{align*}
        \E [y^p \mid \norm{\pproj{\vec{v}} \vec{x}} \leq \gamma] &\geq \E [y^p \cdot \bm{1} \blr{\vec{w}_{j^*}^\top \vec{x} + \eta_{j^*}\geq 0} \mid \norm{\pproj{\vec{v}} \vec{x}} \leq \gamma] \geq \frac{1}{2} \cdot \E [y_{j^*}^p\mid \norm{\pproj{\vec{v}} \vec{x}} \leq \gamma] \\
        &\geq \frac{1}{2} \cdot \E [(\vec{w}_{j^*}^\top \proj{\vec{v}} (\vec{x}) + \eta_{j^*} + \vec{w}_{j^*}^\top \pproj{\vec{v}} (\vec{x}))^p\mid \norm{\pproj{\vec{v}} \vec{x}} \leq \gamma] \\
        &\geq \frac{1}{2} \cdot \E \sqlr{\sum_{l = 0}^{p / 2} \binom{p}{2l} (\vec{w}_{j^*}^\top \proj{\vec{v}} (\vec{x}) + \eta_{j^*})^{p - 2l} (\vec{w}_{j^*}^\top \pproj{\vec{v}} (\vec{x}))^{2l} \mid \norm{\pproj{\vec{v}} \vec{x}} \leq \gamma} \\
        &\geq \frac{1}{2} \cdot (p - 1)!! \cdot (\sigma_{j^*}^2 + 1)^{p / 2}.
    \end{align*}
    Through a similar computation, we obtain an upper bound on the conditional moment:
    \begin{align*}
        &\E [y^p \mid \norm{\pproj{\vec{v}} \vec{x}} \leq \gamma] \\
        &\leq \sum_{j = 1}^k \E [(\vec{w}_j^\top \vec{x} + \eta_j)^p \mid \norm{\pproj{\vec{v}} \vec{x}} \leq \gamma] \\
        &= \sum_{j = 1}^k \E [(\vec{w}_j^\top \proj{\vec{v}} \vec{x} + \eta_j + \vec{w}_j^\top \pproj{\vec{v}} \vec{x})^p\mid \norm{\pproj{\vec{v}} \vec{x}} \leq \gamma] \\
        &= \sum_{j = 1}^k \E \sqlr{\sum_{l = 0}^{p / 2} \binom{p}{2l} (\vec{w}_{j}^\top \proj{\vec{v}} (\vec{x}) + \eta_{j})^{p - 2l}(\vec{w}_{j}^\top \pproj{\vec{v}} (\vec{x}))^{2l} \mid \norm{\pproj{\vec{v}} \vec{x}} \leq \gamma} \\
        &\leq \sum_{j = 1}^k (p - 1)!! \cdot (\sigma_j^2 + 1)^{p / 2} + \sum_{l = 1}^{p / 2} \binom{p}{2l} \E \sqlr{(\vec{w}_{j}^\top \proj{\vec{v}} (\vec{x}) + \eta_{j})^{p - 2l} (\vec{w}_{j}^\top \pproj{\vec{v}} (\vec{x}))^{2l} \mid \norm{\pproj{\vec{v}} \vec{x}} \leq \gamma} \\
        &\leq k \cdot (p - 1)!! \cdot (\sigma_{j^*}^2 + 1)^{p / 2} + \sum_{j = 1}^k \sum_{l = 1}^{p / 2} \binom{p}{2l} \E \sqlr{(\vec{w}_{j}^\top \proj{\vec{v}} (\vec{x}) + \eta_{j})^{p - 2l} (\gamma \cdot \norm{\vec{w}_j})^{2l} \mid \norm{\pproj{\vec{v}} \vec{x}} \leq \gamma}.
    \end{align*}
    From the previous two displays, we get by taking $p^{th}$ roots and taking the limit as $\gamma \to 0$:
    \begin{equation*}
        \lr{\frac{1}{2}}^{1 / p}\cdot \sqrt{(\sigma_{j^*}^2 + 1)} \leq \lim_{\gamma \to 0} \lr{\frac{\E [y^p \mid \norm{\pproj{\vec{v}} \vec{x}} \leq \gamma]}{(p - 1)!!}}^{1 / p} \leq k^{1 / p}\cdot \sqrt{(\sigma_{j^*}^2 + 1)}.
    \end{equation*}
    Taking $p \to \infty$, we obtain:
    \begin{equation}
        \label{eq:f_def}
        F(\vec{v}) = \sqrt{\max_{j} \abs{\vec{w}_j^\top \vec{v}}^2 + 1}.
    \end{equation}
    Now, let $\vec{v}^*$ be such that $\vec{v}^* = \argmax_{\norm{\vec{v}} = 1} F(\vec{v})$. From \eqref{eq:f_def}, we get that $\vec{v}^* = \pm \vec{w}_{j^*}$ for some $j^* \in [k]$ satisfying $j^* = \argmax_{j} \norm{\vec{w}_j}$. Furthermore, $\sigma^* \coloneqq \norm{\vec{w}_{j^*}} = \sqrt{F(\vec{v}^*)^2 - 1}$. To identify the correct sign, consider the random variable $(\vec{x}, y - \vec{x}^\top (\sigma^* \vec{v}^*))$. Note that this is a max-selection model with parameter set $\{\vec{w}_j - \sigma^* \vec{v}^*\}_{j \in [k]}$ with associated function $\wt{F}$ defined analogously to $F$. Now, we have the following two cases:
    \begin{itemize}
        \item[] \textbf{Case 1: } $\wt{F} (\vec{v}^*) = \sqrt{4 \cdot (\sigma^*)^2 + 1}$. In this case, there exists $\vec{w} \in \vec{W}$ with $\vec{w} = - \sigma^* \vec{v}^*$
        \item[] \textbf{Case 2: } $\wt{F} (\vec{v}^*) < \sqrt{4 \cdot (\sigma^*)^2 + 1}$. In this case, we must have $\vec{w}_{j^*} = \sigma^* \vec{v}^*$.
    \end{itemize}
    In either case, we identify a single $\vec{w} \in \vec{W}$. To complete the reduction, note that $(\vec{x}, y - \vec{w}^\top \vec{x})$ is a self-selection model with parameter set $\vec{W}^\prime = \{\vec{w}_j - \vec{w}\}_{j \in [k]}$. Defining the $k - 1$ sized point set $\vec{W}^\dagger = \{\vec{w}_j - \vec{w}\}_{\substack{j \in [k], j \neq j^*}}$, we note the relationship between the distribution functions $\Phi_{\vec{W}^\prime}$ and $\Phi_{\vec{W}^\dagger}$ for all $S \subset \R^d$ with $\P \blr{x \in S} \neq 0$:
    \begin{equation*}
        \forall t \in \R: \Phi_{W^\dagger} (S \times (-\infty, t]) = \frac{\Phi_{W^\prime} (S \times (-\infty, t])}{\Phi (t)}.
    \end{equation*}
    Hence, the distribution function $\Phi_{\vec{W}^\dagger}$ is a function of the distribution function of $\Phi_{\vec{W}^\prime}$ which is in turn a function of the distribution function of $\Phi_{\vec{W}}$. From our induction hypothesis, we have that $\vec{W}^\dagger$ is identifiable from $\Phi_{\vec{W}^\dagger}$ and consequently, from $\Phi_{\vec{W}}$. The proof of the inductive step now follows from the observation that $\vec{W} = \{\vec{z} + \vec{w}\}_{\vec{z} \in \vec{W}^\dagger} \cup \{\vec{w}\}$.
\end{proof}
\subsection{Finding an $O(k)$-subspace}
\label{ssec:identifying_subspace}
\newcommand{\svdmat}{\bm{M}}
We now move towards a finite-sample estimation algorithm for the unknown-index
case. The first step in our approach is an algorithm for approximately
identifying a size-$k$ subspace that has high overlap with $\text{span}(\vec{w}_1,
\ldots, \vec{w}_k)$. 
In order to estimate the subspace, we will consider the matrix 
$\svdmat = \mathbb{E}\lr{\max(0, y)^2\cdot \vec{x}\vec{x}^\top}$. The following Lemma
shows that the top $k$ eigenvectors of $\svdmat$ capture the span of the weight
vectors $\vec{w}_k$:

\begin{lemma}[Weighted covariance]
    \label{lemma:svd_cov}
    Consider the matrix $\svdmat = \mathbb{E}\lr{\max(0, y)^2 \cdot
    \vec{x}\vec{x}^\top}$, and let $$p_i = \mathbb{P}\lr{\left\{
    i = \argmax_{j \in [k]} \vec{w}_j^\top \vec{x} + \eta_j\right\} 
    \text{ and } \left\{ \vec{w}_i^\top \vec{x} + \eta_i > 0\right\}}.$$ Then,
    if $v$ is a unit vector,
    \begin{align*}
        \vec{v} \in \mathrm{span}(\vec{w}_1,\ldots, \vec{w}_k) &\implies v^\top \svdmat v \geq  \mathbb{E}\lr{\max(0, y)^2} 
        + 2\sum_{i=1}^{k} p_i \cdot (v^\top \vec{w}_i)^2 \\
        \vec{v} \in \mathrm{null}(\vec{w}_1, \dots, \vec{w}_k) &\implies \vec{v}^\top \svdmat \vec{v} = \mathbb{E}\lr{\max(0, y)^2}.
    \end{align*}
\end{lemma}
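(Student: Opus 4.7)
The plan is to handle the two cases separately: the null case follows from independence, while the span case follows from Gaussian integration by parts (Stein's identity) applied after smoothing the max function, with convexity producing the inequality.

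First, if $v \in \mathrm{null}(\vec{w}_1, \ldots, \vec{w}_k)$ with $\|v\| = 1$, then by orthogonality of Gaussian projections $v^\top \vec{x}$ is independent of every $\vec{w}_j^\top \vec{x}$, hence independent of $y = \max_j(\vec{w}_j^\top \vec{x} + \eta_j)$. Consequently
\begin{equation*}
v^\top \svdmat v \;=\; \E\bigl[\max(0,y)^2\bigr] \cdot \E\bigl[(v^\top \vec{x})^2\bigr] \;=\; \E\bigl[\max(0,y)^2\bigr],
\end{equation*}
giving the claimed identity.

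For the span case (the argument in fact works for any unit $v$), I would use Stein's identity, which for $\vec{x} \sim \mathcal{N}(0, \bm{I})$ states
\begin{equation*}
\E\bigl[g(\vec{x})\,(v^\top \vec{x})^2\bigr] \;=\; \|v\|^2\,\E\bigl[g(\vec{x})\bigr] + \E\bigl[(v\cdot\nabla)^2 g(\vec{x})\bigr].
\end{equation*}
Apply this with $g(\vec{x}) = h(\vec{x})^2$, where $h(\vec{x}) := \max(0,\vec{w}_1^\top \vec{x}+\eta_1,\ldots,\vec{w}_k^\top \vec{x}+\eta_k) = \max(0,y)$. Since $h$ is convex but only piecewise affine, I would replace it by a smooth non-negative convex approximation $h_\epsilon \to h$ (say, the log-sum-exp smoothing of the $(k{+}1)$-way max), apply Stein to $h_\epsilon^2$, and pass $\epsilon \to 0$. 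The chain rule gives
\begin{equation*}
(v\cdot\nabla)^2 h_\epsilon^2 \;=\; 2\bigl((v\cdot\nabla) h_\epsilon\bigr)^2 + 2\, h_\epsilon(\vec{x})\cdot (v\cdot\nabla)^2 h_\epsilon(\vec{x}),
\end{equation*}
and both terms are pointwise non-negative: the first is a square, while the second is the product of $h_\epsilon \ge 0$ with a Hessian quadratic form of a convex function. Discarding the second non-negative term and letting $\epsilon \to 0$, the gradient of the soft-max concentrates on the active coordinate $j^*(\vec{x}) = \arg\max_j(\vec{w}_j^\top \vec{x}+\eta_j)$, so the first term converges to $2\,\bm{1}[y>0]\,(v^\top \vec{w}_{j^*})^2$. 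Taking expectations and recognizing $p_i = \P[j^* = i,\ y > 0]$ yields
\begin{equation*}
v^\top \svdmat v \;\ge\; \E[\max(0,y)^2] + 2\sum_{i=1}^k p_i\,(v^\top \vec{w}_i)^2,
\end{equation*}
as desired.

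The main obstacle will be making the smoothing rigorous: justifying Stein's identity for the Lipschitz but non-$C^2$ function $\max(0,y)^2$ through the $h_\epsilon$ approximation, and interchanging limits with expectations (routine via dominated convergence, since everything is bounded by low-degree Gaussian polynomials). The conceptual crux is that the convexity of $\max$ is exactly what makes the discarded Hessian term non-negative and thus yields an inequality rather than an equality; the null case is tight precisely because $v^\top \vec{w}_i = 0$ for all $i$ forces both positive contributions to vanish simultaneously.
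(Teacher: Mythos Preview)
Your proposal is correct and follows essentially the same approach as the paper: both smooth the $(k{+}1)$-way max via log-sum-exp, apply Stein's identity to compute $\E[(v^\top\vec{x})^2 h_\epsilon^2]$, drop a non-negative term (your Hessian term $2h_\epsilon\,(v\cdot\nabla)^2 h_\epsilon$ is exactly the paper's $2\beta F_\beta(\vec{Z})\cdot\Var_{i\sim\pi}[v^\top\vec{w}_i]$), and pass to the limit. Your framing via second-order Stein on $\vec{x}$ directly and the convexity interpretation is slightly cleaner than the paper's two sequential first-order applications on the auxiliary vector $\vec{V}\in\R^{2k+2}$, and as you note it works for any unit $v$ rather than just $v\in\mathrm{span}(\vec{w}_1,\ldots,\vec{w}_k)$, but the mathematical content is identical.
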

\begin{proof}
    First, for all unit vectors $\vec{v} \in \text{null}({\vec{w}_1,\ldots, \vec{w}_k})$,
independence of Gaussian directions implies:
\begin{align*}
    \vec{v}^\top \svdmat \vec{v} &= \mathbb{E}\lr{\max(0, y)^2 \cdot (\vec{v}^\top \vec{x})^{2}} 
              = \mathbb{E}\lr{\max(0, y)^2} \cdot \mathbb{E}\lr{(\vec{v}^\top \vec{x})^{2}} 
              = \mathbb{E}\lr{\max(0, y)^2}.
\end{align*}

\noindent We now consider a unit vector $\vec{u}$ in the span of $\vec{w}_i$ given by $\vec{u} =
\sum_{i=1}^k a_i \vec{w}_i$.
In order to compute $\vec{u}^\top \svdmat  \vec{u}$, we will need some
preliminary results. First, we recall the smooth maximum function $F_\beta$: 
\[
    F_\beta: \mathbb{R}^k \to \mathbb{R} \qquad \text{where} \qquad F(\vec{W}) \coloneqq \beta^{-1} \log\lr{\sum_{j=1}^k \exp(\beta \cdot W_j)}
\]

\noindent The key property of the smooth maximum function is that, for any $\vec{W}
\in \mathbb{R}^k$,
\begin{equation}
    \label{eq:smoothmax_bounds}
    0 \leq F_\beta (\vec{W}) - \max_{j \in [k]} W_j \leq \beta^{-1} \log(k),
\end{equation}
and in particular $\lim_{\beta \to \infty} F_\beta = \max$. 
Partial derivatives of $F_\beta$ are given by the following Lemma:
\begin{lemma}[Derivatives of the smooth maximum function]
    \label{lemma:derivs}
   The partial derivatives of the smooth maximum functions with smoothing
   parameter $\beta$ are given by:
   \begin{align*}
       \partial_a F_\beta(z) = \pi_a(z) \qquad \text{and} \qquad 
            \partial_a \partial_b F_\beta(z) = \beta\cdot \lr{\bm{1}_{a = b} \cdot \pi_a(z) - \pi_a(z)\pi_b(z)}, 
   \end{align*}
   where $\pi_a(z) = \exp\{\beta z_a\}/\sum_{\ell=1}^k \exp\{\beta z_\ell\}$.
   Observe that $\pi_a(z) > 0$ for all $a \in [k]$, and $\sum_{a=1}^{k} \pi_a(z) = 1$.
\end{lemma}
\noindent We next state the main identity behind our proof, due to Stein (see, e.g., A.6 of \citep{Talagrand2003SpinG} for a proof):
\begin{lemma}[Stein's Identity]
    \label{lemma:stein}
    Let $\vec{W} = (W_1, \ldots W_p)$ be a centered Gaussian random vector in
    $\mathbb{R}^p$. Let $f : \mathbb{R}^p \to \mathbb{R}$ be a $C_1$-function
    such that $\mathbb{E}[|\partial_j f(\vec{W})|] < \infty$ for all $j \in [p]$.
    Then, for every $j \in [p]$,
    \[
        \mathbb{E}\lr{W_j \cdot f(\vec{W})} = \sum_{a=1}^p \mathbb{E}[W_a\cdot W_j] \mathbb{E}[\partial_a f(\vec{W})].
    \]
\end{lemma}
\noindent Now, by convention let $\vec{w}_{k+1} = \bm{0}$ and $\eta_{k + 1} = 0$. We
construct $V \in \mathbb{R}^{2k}$ as 
\[
   V_i = \begin{cases}
        (\vec{w}_i^\top \vec{x}) &\text{for } 1 \leq i \leq k+1 \\
        \eta_{i - k} &\text{for } k+2 \leq i \leq 2(k+1). \\
   \end{cases} 
\]
For convenience, we next define $\vec{Z} \in \mathbb{R}^k$ such that $Z_i = W_i +
W_{k+i} = y_i$, and $f_i: \mathbb{R}^{2k} \to \mathbb{R}$ as
\[
    f_i(V) = V_i \cdot F_\beta(\vec{Z})^2.
\]
\noindent 
By construction, $\max(0, y) = \max(\vec{Z}) \geq 0$.
Applying Equation \eqref{eq:smoothmax_bounds},
\begin{align*}
    \mathbb{E}&\lr{(\vec{u}^\top \vec{x})^2 y^2} = 
    \mathbb{E}\lr{(\vec{u}^\top \vec{x})^2 \max\left\{\vec{w}_1^\top \vec{x} + \eta_1, \ldots, \vec{w}_{k+1}^\top \vec{x} + \eta_{k+1}\right\}^2} \\
    &= \lim_{\beta \to \infty}
    \mathbb{E}\lr{(\vec{u}^\top \vec{x})^2 \lr{F_{\beta}(\vec{w}_1^\top \vec{x} + \eta_1, \ldots, \vec{w}_k^\top \vec{x} + \eta_k) - \beta^{-1} \log(k)}^2} \\
    &= \lim_{\beta \to \infty} \overbrace{
        \mathbb{E}\lr{\lr{\sum_{i=1}^k a_i \vec{w}_i^\top \vec{x}}^2 F_{\beta}\lr{\vec{Z}}^2}
    }^{S_1} 
    - \overbrace{
        \frac{2}{\beta} \cdot \mathbb{E}\lr{\lr{\sum_{i=1}^k a_i \vec{w}_i^\top \vec{x}}^2 F_{\beta}\lr{\vec{Z}}} \log(k) 
    }^{S_2} 
    + \overbrace{\frac{1}{\beta^{2}} \log^2(k)}^{S_3},
\end{align*}
where we have used in $S_3$ that $\vec{u}$ is a unit vector. Now, as
$\beta \to \infty$, boundedness of the weight vectors and of
$\mathbb{E}\lr{\max(0, y)^2}$ imply $S_2, S_3 \to 0$. Thus, we can focus on the
first term (i.e., $S_1$):
\begin{align}
    \label{eq:chungus}
    S_1 &= \mathbb{E}\lr{\sum_{i=1}^k \sum_{j=1}^k a_i \cdot a_j \cdot V_j \cdot f_i(\vec{V})} 
    = \sum_{i=1}^k \sum_{j=1}^k a_ia_j \lr{\sum_{a=1}^{2k+2} \mathbb{E}[V_j \cdot V_a] \cdot \mathbb{E}[\partial_a f_i(\vec{V})]}.
\end{align}
Now, the $\eta_j$ terms are all i.i.d. $\mathcal{N}(0, 1)$, and
so $\mathbb{E}[V_a \cdot V_j] = 0$ for all $a > k + 1$. Furthermore, when $a = k
+ 1$, we have $V_a = \vec{w}_{k+1}^\top \vec{x} = 0$ by construction. For $a \in [k]$, we
have $\mathbb{E}[V_a \cdot V_j] = \mathbb{E}[\vec{w}_a^\top (\vec{x}\vec{x}^\top) \vec{w}_i] = \vec{w}_a^\top
\vec{w}_j$. Also, our definition of $f_i(\vec{V})$ implies that
\begin{align*}
    \partial_a f_i(\vec{V}) = 2 V_i \cdot F_\beta(\vec{Z}) \cdot \partial_a F_\beta(\vec{Z}) + \bm{1}_{a = i} \cdot F_\beta(\vec{Z})^2.
\end{align*}
Combining the aforementioned properties with Lemma \ref{lemma:stein}, we have that 
\begin{align}
    \label{eq:chungus2}
    \sum_{a=1}^{2k+2} \mathbb{E}[V_j \cdot V_a] \cdot \mathbb{E}[\partial_a f_i(\vec{V})] 
    &= \vec{w}_j^\top \vec{w}_i\cdot \mathbb{E}\left[F_\beta(\vec{Z})^2\right] + 
    2 \sum_{a=1}^{k} \vec{w}_j^\top \vec{w}_a \cdot \mathbb{E}\left[(\vec{w}_i^\top \vec{x}) F_\beta(\vec{Z}) \cdot \partial_a F_\beta(\vec{Z}) \right].
\end{align}
We now apply Stein's Identity (Lemma \ref{lemma:stein}) once more, this time
using $g(\vec{V}) \coloneqq F_\beta(\vec{Z})\cdot \partial_a F_\beta(\vec{Z})$, i.e.,
\begin{align*}
   \mathbb{E}\left[(\vec{w}_i^\top \vec{x}) F_\beta(\vec{Z}) \cdot \partial_a F_\beta(\vec{Z}) \right] 
   &= \sum_{b=1}^{2k+2} \mathbb{E}[V_i \cdot V_b] \cdot \mathbb{E}[F_\beta(\vec{Z}) \cdot \partial_a\partial_b F_\beta(\vec{Z}) + \partial_a F_\beta(\vec{Z}) \partial_b F_\beta(\vec{Z})].
\end{align*}
Again, $\mathbb{E}[V_b \cdot V_i] = 0$ for $b > k$ and $\mathbb{E}[V_b \cdot V_i] = \vec{w}_i^\top \vec{w}_b$ for $b \in [k]$, so:
\begin{align*}
   \mathbb{E}\big[(\vec{w}_i^\top \vec{x}) \cdot & \partial_a F_\beta(\vec{Z}) \big] 
    = \sum_{b=1}^{k} (\vec{w}_i^\top \vec{w}_b) \cdot \mathbb{E}[F_\beta(\vec{Z}) \cdot \partial_a\partial_b F_\beta(\vec{Z}) + \partial_a F_\beta(\vec{Z}) \partial_b F_\beta(\vec{Z})] \\
   & = \sum_{b=1}^{k} (\vec{w}_i^\top \vec{w}_b) \cdot \mathbb{E}[\beta \cdot F_\beta(\vec{Z})\lr{\bm{1}_{a = b} \cdot \pi_a(\vec{Z}) - \pi_a(\vec{Z})\pi_b(\vec{Z})} 
   + \pi_a(\vec{Z}) \pi_b(\vec{Z})]
\end{align*}
Thus, we can rewrite \eqref{eq:chungus2} as
\begin{align*}
    \sum_{a=1}^{2k+2} \mathbb{E}[V_j \cdot V_a] \cdot & \mathbb{E}[\partial_a f_i(\vec{V})] 
    = (\vec{w}_i^\top \vec{w}_j) \mathbb{E}\left[F_\beta(\vec{Z})^2 \right] 
    + 2 \sum_{a=1}^k \sum_{b=1}^k \lr{\vec{w}_i^\top \vec{w}_b} \lr{\vec{w}_j^\top \vec{w}_a} \mathbb{E}[\pi_a(\vec{Z}) \pi_b(\vec{Z})] \\
    &+ 2 \beta \sum_{a=1}^{k} \sum_{b=1}^k \lr{\vec{w}_i^\top \vec{w}_b} \lr{\vec{w}_j^\top \vec{w}_a} \cdot \mathbb{E}[F_\beta(\vec{Z}) \cdot \lr{\bm{1}_{a = b} \cdot \pi_a(\vec{Z}) - \pi_a(\vec{Z})\pi_b(\vec{Z})}],
\end{align*}
and summing over $i$ and $j$ as in \eqref{eq:chungus} yields:
\begin{align*}
    S_1 &= \sum_{i,j=1}^k a_i (\vec{w}_i^\top \vec{w}_j) a_j \mathbb{E}[F_\beta(\vec{Z})^2] 
    + 2 \sum_{a,b=1}^{k} (\vec{u}^\top \vec{w}_b)(\vec{u}^\top \vec{w}_a) \mathbb{E}[\pi_a(\vec{Z})\pi_b(\vec{Z})] \\
    &\qquad + 2\beta \sum_{a,b=1}^{k} (\vec{u}^\top \vec{w}_b)(\vec{u}^\top \vec{w}_a) \mathbb{E}[F_\beta(\vec{Z})\cdot (\bm{1}_{a = b} \pi_a(\vec{Z}) - \pi_a(\vec{Z})\pi_b(\vec{Z}))] \\
    &= \mathbb{E}\left[F_\beta(\vec{Z})^2
    + 2 \beta \cdot F_\beta(\vec{Z}) \cdot \text{Var}_{i \sim \text{Cat}(\pi(\vec{Z}))}\left[\vec{u}^\top \vec{w}_i\right] 
    + 2 \cdot \mathbb{E}_{i \sim \text{Cat}(\pi(\vec{Z}))} \left[\vec{u}^\top \vec{w}_i\right]^2
    \right],
\end{align*}
where the outer expectation is over the data (i.e., $\vec{V}$ and correspondingly $\vec{Z}$), and $i \sim \text{Cat}(\pi(\vec{Z}))$ is a multinomial distribution over $[k]$ where index $i$ is drawn with probability $\pi_i(\vec{Z})$. Now, noting $F_\beta (\vec{Z}) \geq 0$, we get:
\begin{align*}
    S_1 &= \mathbb{E}\left[F_\beta(\vec{Z})^2 + 2 \beta \cdot F_\beta(\vec{Z}) \cdot \text{Var}_{i \sim \text{Cat}(\pi(\vec{Z}))}\left[\vec{u}^\top \vec{w}_i\right] + 2 \cdot \mathbb{E}_{i \sim \text{Cat}(\pi(\vec{Z}))} \left[\vec{u}^\top \vec{w}_i\right]^2\right] \\
    &\geq \mathbb{E}\left[F_\beta(\vec{Z})^2 + 2 \cdot \mathbb{E}_{i \sim \text{Cat}(\pi(\vec{Z}))} \left[\vec{u}^\top \vec{w}_i\right]^2\right]
\end{align*}
and hence for $i^* = \argmax_i Z_i$ and recall, $p_i = \mathbb{P}\lr{\left\{
    i = \argmax_{j \in [k]} \vec{w}_j^\top \vec{x} + \eta_j\right\} 
    \text{ and } \left\{ \vec{w}_i^\top \vec{x} + \eta_i > 0\right\}}$,
\begin{equation*}
    \lim_{\beta \to \infty} S_1 \geq \E [\max(0, y)^2 + 2 \cdot (\vec{w}_{i^*}^\top \vec{u})^2] = \mathbb{E}[\max(0, y)^2] + 2 \vec{u}^\top \lr{\sum_{i=1}^k p_i\cdot \vec{w}_i \vec{w}_i^\top} \vec{u}
\end{equation*}

which concludes the proof.
\end{proof}

We will now establish lower bounds on the quantities, $p_i$, from the previous lemma which will aid in establishing a quantitative spectral gap. 

\begin{lemma}
    \label{lem:pi_bnd}
    Under Assumption~\ref{as:no_index}, we have for some absolute constant $C > 0$:
    \begin{equation*}
        \forall i \in [k]: p_i \geq \exp \lbrb{-Ck \log ((2B) / \Delta)}.
    \end{equation*}
\end{lemma}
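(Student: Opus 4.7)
The plan is to lower-bound $p_i$ by exhibiting an explicit subevent of $\{i = \argmax_{j \in [k]} (\vec{w}_j^{*\top}\vec{x} + \eta_j)\} \cap \{\vec{w}_i^{*\top}\vec{x} + \eta_i > 0\}$ and then estimating its Gaussian probability via the Khatri--\v{S}id\'ak correlation inequality. To leverage Assumption~\ref{as:no_index}, I decompose $\vec{x} = \alpha \vec{u}_i + \vec{x}_\perp$ along $\vec{u}_i \coloneqq \vec{w}^*_i/\|\vec{w}^*_i\|$, so that $\alpha \sim \mathcal{N}(0, 1)$ is independent of $\vec{x}_\perp$ and of $(\eta_1, \ldots, \eta_k)$. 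Writing $c_{ji} \coloneqq \vec{w}_j^{*\top}\vec{u}_i$, Assumption~\ref{as:no_index} gives $|c_{ji}| \leq \|\vec{w}^*_i\| - \Delta$ and $\|\vec{w}^*_i\| \geq \Delta$, so whenever $\alpha > 0$ we obtain the pointwise gap $\alpha(\|\vec{w}^*_i\| - c_{ji}) \geq \alpha\Delta$ for every $j \neq i$.

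Next, consider the event $E = A \cap B \cap C$, where $A \coloneqq \{\alpha \in [3, 4]\}$, $B \coloneqq \bigcap_{j \neq i}\{|X_j| \leq 2\Delta\}$ with $X_j \coloneqq \vec{w}_j^{*\top}\vec{x}_\perp + \eta_j - \eta_i$, and $C \coloneqq \{|\eta_i| \leq \Delta\}$. A direct computation shows that on $E$, for each $j \neq i$,
\begin{equation*}
    (\vec{w}_i^{*\top}\vec{x} + \eta_i) - (\vec{w}_j^{*\top}\vec{x} + \eta_j) = \alpha(\|\vec{w}^*_i\| - c_{ji}) - X_j \geq 3\Delta - 2\Delta = \Delta > 0,
\end{equation*}
together with $\vec{w}_i^{*\top}\vec{x} + \eta_i \geq 3\|\vec{w}^*_i\| - \Delta \geq 2\Delta > 0$. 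Hence $E$ is contained in the event whose probability equals $p_i$.

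To lower-bound $\mathbb{P}(E)$, I would first use the independence of $\alpha$ from $(\vec{x}_\perp, \eta_1, \ldots, \eta_k)$ to factor $\mathbb{P}(E) = \mathbb{P}(A)\cdot\mathbb{P}(B \cap C)$, and then invoke Khatri--\v{S}id\'ak on the jointly centered Gaussian vector $((X_j)_{j \neq i}, \eta_i)$---whose defining events in $B \cap C$ are axis-aligned symmetric intervals---to obtain $\mathbb{P}(B \cap C) \geq \mathbb{P}(|\eta_i| \leq \Delta)\prod_{j \neq i}\mathbb{P}(|X_j| \leq 2\Delta)$. Each $X_j$ is a centered Gaussian of variance at most $\|\vec{w}^*_j\|^2 + 2 \leq B^2 + 2$, so a standard Gaussian anti-concentration calculation yields $\mathbb{P}(|X_j| \leq 2\Delta) \gtrsim \Delta/B$ in the regime $B \gtrsim 1$. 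Multiplying this estimate over $j \neq i$ together with $\mathbb{P}(|\eta_i| \leq \Delta) \gtrsim \min(\Delta, 1)$ and $\mathbb{P}(A) = \Omega(1)$ yields $p_i \geq \exp(-C k \log(2B/\Delta))$ for some absolute $C > 0$.

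The main obstacle is making the anti-concentration estimate uniform across regimes of $B$; in the residual regime $B \lesssim 1$ one cannot directly extract a factor of $\Delta/B$ from $\mathbb{P}(|X_j| \leq 2\Delta)$. However, in that regime the separability assumption forces $|\vec{w}_i^{*\top}\vec{w}_j^*| \leq \|\vec{w}_j^*\|(\|\vec{w}_j^*\| - \Delta)$ to be of order $B^2$, making the Gram matrix $\vec{W}^{*\top}\vec{W}^*$ nearly diagonal; an approximate-symmetry argument then gives the stronger bound $p_i \geq 1/(2k)$, which comfortably dominates $\exp(-Ck\log(2B/\Delta))$ for any absolute $C \geq 1$, closing that regime.
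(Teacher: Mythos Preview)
Your main argument is clean and correct when $B$ is bounded away from zero, but the small-$B$ case is a genuine gap, not a cosmetic one. The problem is that you packed the $\vec{x}$-part and the noise-part into the single variable $X_j = \vec{w}_j^{*\top}\vec{x}_\perp + \eta_j - \eta_i$, whose variance is at least $2$ regardless of $B$. Hence $\mathbb{P}(|X_j|\le 2\Delta)$ is only $\Theta(\Delta)$ when $B$ is small, not $\Theta(\Delta/B)$; taking the $(k-1)$-fold product yields $\Delta^{k-1}$, which cannot be written as $\exp(-Ck\log(2B/\Delta))$ for any absolute $C$ (try $\Delta=B=10^{-m}$ with $m$ large: the target is $2^{-Ck}$ while your bound is $10^{-m(k-1)}$). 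Your proposed patch---``the Gram matrix is nearly diagonal, so an approximate-symmetry argument gives $p_i\ge 1/(2k)$''---is not a proof: near-orthogonality of the $\vec{w}^*_j$ does not by itself force the $y_j$ to be exchangeable, and making the perturbation-from-i.i.d.\ argument rigorous is more work than the lemma itself.

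The paper sidesteps the issue entirely by \emph{separating} the $\vec{x}$-constraint from the $\eta$-constraint. Concretely, it restricts $\vec{x}$ to the set $\{\|\mathcal{P}^\perp_{\vec{w}_i}\mathcal{P}_{\mathcal{V}}\vec{x}\|\le \Delta/(2B)\}\cap\{\vec{u}_i^\top\vec{x}\ge 2\}$, on which one checks deterministically that $\vec{w}_i^\top\vec{x}\ge \vec{w}_j^\top\vec{x}+\Delta$ for all $j\neq i$; the probability of this event is a scale-free $(\Delta/(2B))^{k-1}$-type quantity. It then \emph{independently} requires $\eta_i=\max_j\eta_j$ and $\eta_i\ge 0$, which by exact (not approximate) symmetry has probability at least $1/(2k)$. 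You can repair your own argument in exactly this spirit: keep $A=\{\alpha\in[3,4]\}$, replace your event $B$ by $\bigcap_{j\ne i}\{|\vec{w}_j^{*\top}\vec{x}_\perp|\le 2\Delta\}$ (Khatri--\v{S}id\'ak still applies, and now each factor has variance $\le B^2$, giving the desired $\Delta/B$), and replace $C$ by $\{\eta_i\ge\max_j\eta_j\}\cap\{\eta_i\ge 0\}$. The three pieces are independent, and the bound becomes uniform in $B$.
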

\begin{proof}
    Fix $i \in [k]$ and letting $\mathcal{V}$ denote the subspace spanned by $\{\vec{w}_j\}_{j \in [k]}$, define the sets:
    \begin{align*}
        S_i &\coloneqq \left\{\vec{x} \in \R^d: \norm{\mathcal{P}_{\vec{w}_i}^\perp \mathcal{P}_{\mathcal{V}} \vec{x}} \leq \frac{\Delta}{2B}\right\} \\
        T_i &\coloneqq \left\{\vec{x} \in \R^d: \frac{\vec{w}_i^\top \vec{x}}{\norm{\vec{w}_i}} \geq 2 \right\}.
    \end{align*}
    We have from the fact that $\vec{x} \thicksim \mathcal{N} (0, I)$ and the independence of the events $S_i$ and $T_i$:
    \begin{align*}
        \P \left\{ \vec{x} \in S_i \cap T_i \right\} &= \P \left\{ \vec{x} \in S_i \right\} \cdot \P \left\{ \vec{x} \in T_i \right\} \geq c \cdot \P \left\{ \vec{x} \in S_i \right\} \\
        &\geq c \cdot \left(\frac{\Delta}{2B}\right)^{k - 1} \cdot \frac{1}{2^{(k - 1) / 2} \Gamma ((k - 1) / 2 + 1)} \cdot \exp \left\{ - \frac{\Delta^2}{8 B^2} \right\} \\
        &\geq \exp \lbrb{- C k \log ((2 B) / \Delta)}
    \end{align*}
    for some absolute constant $C > 0$. Furthermore, we have for any $\vec{x} \in S_i \cap T_i$ and $j \in [k] \setminus i$:
    \begin{align*}
        \vec{w}_j^\top \vec{x}  &=  \vec{w}_j^\top (\mathcal{P}^\perp_{\vec{w}_i} \mathcal{P}_{\mathcal{V}} \vec{x} + \mathcal{P}_{\vec{w}_i} \vec{x}) \leq \norm{\vec{w}_j} \cdot \norm{\mathcal{P}^\perp_{\vec{w}_i} \mathcal{P}_{\mathcal{V}} \vec{x}} + \frac{\vec{w}_j^\top \vec{w}_i}{\norm{\vec{w}_i}} \cdot \frac{\vec{w}_i^\top \vec{x}}{\norm{\vec{w}_i}} \\
        &\leq B \cdot \frac{\Delta}{2B} + \frac{\vec{w}_i^\top \vec{x}}{\norm{\vec{w}_i}} \cdot (\norm{\vec{w}_i} - \Delta) \leq \Delta \left(\frac{1}{2} -  \frac{\vec{w}_i^\top \vec{x}}{\norm{\vec{w}_i}}\right) + \vec{w}_i^\top \vec{x} \leq \vec{w}_i^\top \vec{x} - \Delta.
    \end{align*}
    Finally, noting that $\vec{w}_i^\top \vec{x} > 0$ for any $\vec{x} \in S_i$:
    \begin{align*}
        p_i &\coloneqq \mathbb{P}\lr{\left\{i = \argmax_{j \in [k]} \vec{w}_j^\top \vec{x} + \eta_j\right\} \text{ and } \left\{ \vec{w}_i^\top \vec{x} + \eta_i > 0\right\}} \\
        &\geq \P \lprp{\lbrb{i = \argmax_{j \in [k]} \vec{w}_j^\top \vec{x}} \cap \lbrb{\vec{w}_i^\top \vec{x} > 0} \cap \lbrb{i = \argmax_{j\ in [k]} \eta_j} \cap \lbrb{\eta_i \geq 0}} \\
        &= \P \lprp{\lbrb{i = \argmax_{j \in [k]} \vec{w}_j^\top \vec{x}} \text{ and } \lbrb{\vec{w}_i^\top \vec{x} > 0}} \cdot \P \lprp{\lbrb{i = \argmax_{j\ \in [k]} \eta_j} \cap \lbrb{\eta_i \geq 0}} \\
        &\geq \P \lprp{\vec{x} \in S_i \cap T_i} \cdot \P \lprp{\eta_i \geq 0} \cdot \P \lprp{\lbrb{i = \argmax_{j \in [k]} \eta_j} \big| \eta_i > 0} \\
        &\geq \P \lprp{\vec{x} \in S_i \cap T_i} \cdot \P \lprp{\eta_i \geq 0} \cdot \frac{1}{2} \cdot \frac{1}{k} \geq \exp\lbrb{- Ck \log ((2B) / \Delta)}
    \end{align*}
    from our previous displays, thus concluding the proof of the lemma.
\end{proof}

\noindent We will now use Lemma~\ref{lem:pi_bnd} to establish a quantitative spectral gap bound on the principal subspaces approximately containing the $\vec{w}_i$. 
\begin{lemma}
    \label{lem:ui_spec_gap}
    Let $\epsilon \in (0, 1/2)$ and $\lbrb{\vec{u}_1}_{i \in [l]}$ be all the
    singular vectors of $\svdmat$ whose corresponding singular values
    $\lbrb{\sigma_i}_{i \in [l]}$ satisfy:
    \begin{equation*}
        \forall i \in [l]: \sigma_i \geq \E \lsrs{\max (0, y)^2} + \min_{i \in [k]} \frac{p_i \epsilon^2}{2}.
    \end{equation*}
    Then, letting $\vec{U} = \mathrm{span} (\lbrb{\vec{u}_i}_{i \in [l]})$, we have:
    \begin{gather*}
        l \leq k,\qquad \forall i \in [k]: \frac{\norm{\vec{w}_i - \proj{\vec{U}} \vec{w}_i}}{\norm{\vec{w}_i}} \leq \epsilon \qquad \text{and} \qquad \norm{\svdmat} \leq 3kd (B^2 + 1).
    \end{gather*}
\end{lemma}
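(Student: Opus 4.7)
The strategy is to exploit a block-diagonal structure of $\svdmat$ induced by splitting $\R^d$ into $\mathcal{V} \coloneqq \mathrm{span}(\vec{w}_1, \ldots, \vec{w}_k)$ and its orthogonal complement. Writing $c \coloneqq \E[\max(0,y)^2]$, I would first show that every vector in $\mathcal{V}^\perp$ is an exact eigenvector of $\svdmat$ with eigenvalue $c$. Concretely, for $\vec{v} \in \mathcal{V}^\perp$ decompose $\vec{x} = \vec{x}_\parallel + \vec{x}_\perp$ with $\vec{x}_\parallel = \proj{\mathcal{V}}\vec{x}$ and $\vec{x}_\perp = \pproj{\mathcal{V}}\vec{x}$. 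Since $\vec{w}_j \in \mathcal{V}$, the response $y$ depends only on $(\vec{x}_\parallel, \vec{\eta})$, while $\vec{x}^\top \vec{v} = \vec{x}_\perp^\top \vec{v}$. For $\vec{x} \sim \mathcal{N}(0, \vec{I})$ the three pieces $\vec{x}_\parallel, \vec{x}_\perp, \vec{\eta}$ are mutually independent, so expanding $\svdmat \vec{v} = \E[\max(0,y)^2 (\vec{x}_\perp^\top \vec{v}) (\vec{x}_\parallel + \vec{x}_\perp)]$: the $\vec{x}_\parallel$-piece factors through $\E[\vec{x}_\perp^\top \vec{v}] = 0$ and vanishes, while the $\vec{x}_\perp$-piece equals $c \cdot \E[\vec{x}_\perp \vec{x}_\perp^\top] \vec{v} = c \cdot \pproj{\mathcal{V}} \vec{v} = c \vec{v}$.

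From this block structure, part~(1) is immediate: any singular vector of $\svdmat$ with singular value strictly above $c$ must lie in $\mathcal{V}$, and since $\dim \mathcal{V} \leq k$ I obtain $l \leq k$ and moreover $\vec{U} \subseteq \mathcal{V}$. For part~(2), fix $i \in [k]$ and set $\vec{r} = \pproj{\vec{U}} \vec{w}_i$. Since $\vec{w}_i \in \mathcal{V}$ and $\vec{U} \subseteq \mathcal{V}$, the residual $\vec{r}$ lies in $\vec{U}^\perp \cap \mathcal{V}$, i.e., in the span of eigenvectors of $\svdmat$ with eigenvalues strictly below the threshold $c + \min_j p_j \eps^2 / 2$. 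Hence $\vec{r}^\top \svdmat \vec{r} < (c + \min_j p_j \eps^2/2) \norm{\vec{r}}^2$. Pairing this with the lower bound $\vec{r}^\top \svdmat \vec{r} \geq c \norm{\vec{r}}^2 + 2 \sum_j p_j (\vec{r}^\top \vec{w}_j)^2$ (obtained by applying Lemma~\ref{lemma:svd_cov} to $\vec{r}/\norm{\vec{r}}$ and rescaling), I conclude $2 p_i (\vec{r}^\top \vec{w}_i)^2 \leq (\min_j p_j / 2) \eps^2 \norm{\vec{r}}^2 \leq (p_i/2) \eps^2 \norm{\vec{r}}^2$, so $|\vec{r}^\top \vec{w}_i| \leq (\eps/2) \norm{\vec{r}}$. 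The identity $\vec{r}^\top \vec{w}_i = \norm{\vec{r}}^2$ (since $\vec{r}$ is the orthogonal-projection residual of $\vec{w}_i$) then forces $\norm{\vec{r}} \leq \eps/2$, and the relative bound $\norm{\vec{r}}/\norm{\vec{w}_i} \leq \eps$ follows after invoking the lower bound $\norm{\vec{w}_i} \geq \Delta$ implied by Assumption~\ref{as:no_index} together with the calibration of $\eps$ assumed in Theorem~\ref{thm:no_index}.

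For part~(3), since $\svdmat \succeq 0$ it suffices to bound the trace: $\norm{\svdmat} \leq \mathrm{tr}(\svdmat) = \E[\max(0,y)^2 \norm{\vec{x}}^2]$. Using $\max(0,y)^2 \leq y^2 \leq \sum_j (\vec{w}_j^\top \vec{x} + \eta_j)^2$, I bound each summand by expanding the square. The cross term $\E[(\vec{w}_j^\top \vec{x}) \eta_j \norm{\vec{x}}^2]$ vanishes by independence of $\eta_j$ from $\vec{x}$, while the standard fourth-moment identity $\E[(\vec{w}_j^\top \vec{x})^2 x_k^2] = \norm{\vec{w}_j}^2 + 2 w_{jk}^2$ summed over $k \in [d]$, together with $\E[\eta_j^2 \norm{\vec{x}}^2] = d$, yields $(d+2)\norm{\vec{w}_j}^2 + d \leq (d+2)(B^2+1)$. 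Summing over $j \in [k]$ gives $\mathrm{tr}(\svdmat) \leq k(d+2)(B^2+1) \leq 3kd(B^2+1)$.

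The main obstacle is the block-diagonalization argument: establishing that every direction orthogonal to the $\vec{w}_j$'s is an exact eigenvector of $\svdmat$ with eigenvalue $c$ requires careful tracking of the mutual independence between $\vec{x}_\parallel$, $\vec{x}_\perp$, and $\vec{\eta}$, and is what converts the one-sided quadratic-form bound of Lemma~\ref{lemma:svd_cov} on the null space into an exact eigenstructure. Once this is in hand, everything else reduces to a short comparison between the quadratic-form bound of Lemma~\ref{lemma:svd_cov} and the threshold defining $\vec{U}$, plus an elementary fourth-moment calculation for the trace bound.
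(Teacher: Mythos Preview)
Your approach matches the paper's: for part~(2) the paper also takes $\vec{v}$ to be the unit residual $\pproj{\vec{U}}\vec{w}_i/\|\pproj{\vec{U}}\vec{w}_i\|$, observes $\vec{v}\perp\vec{U}$ so that $\vec{v}^\top\svdmat\vec{v}$ lies below the threshold, and contradicts this with the lower bound from Lemma~\ref{lemma:svd_cov}; for part~(3) it likewise bounds $\|\svdmat\| \leq \E[\max(0,y)^2\|\vec{x}\|^2]$, though via Cauchy--Schwarz on each term $\E[(\vec{w}_i^\top\vec{x}+\eta_i)^2 x_j^2]$ rather than your direct fourth-moment expansion.

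Your explicit block-diagonalization---showing every $\vec{v}\in\mathcal{V}^\perp$ is an \emph{eigenvector} of $\svdmat$ with eigenvalue $c$, not merely that the quadratic form equals $c$---is in fact more careful than the paper, which invokes Lemma~\ref{lemma:svd_cov} on $\vec{v}$ without first verifying $\vec{v}\in\mathcal{V}$; your argument supplies $\vec{U}\subseteq\mathcal{V}$ and hence $\vec{r}\in\mathcal{V}$, closing that gap. The one wrinkle you flag---passing from the absolute bound $\|\vec{r}\|\leq\eps/2$ to the relative bound $\|\vec{r}\|/\|\vec{w}_i\|\leq\eps$ by importing $\|\vec{w}_i\|\geq\Delta$ and the downstream $\eps$-calibration of Theorem~\ref{thm:no_index}---is indeed external to the lemma as stated, but the paper's own proof has the same slippage (it asserts $(\vec{v}^\top\vec{w}_i)^2\geq\eps^2$ directly from $\|\vec{r}\|>\eps\|\vec{w}_i\|$, tacitly needing $\|\vec{w}_i\|\geq 1$). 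Since the lemma is only ever applied with $\eps$ tiny relative to $\Delta\leq\|\vec{w}_i\|$, the point is immaterial in context.
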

\begin{proof}
    The first claim (i.e., that $\ell \leq k$) follows from Lemma~\ref{lemma:svd_cov}. For the second, consider the contrary and suppose that for some $i \in [k]$,
    \begin{equation*}
        \frac{\norm{\vec{w}_i - \proj{\vec{U}} \vec{w}_i}}{\norm{\vec{w}_i}} > \epsilon.
    \end{equation*}
    Now, define the vector $\vec{v}$ as:
    \begin{equation*}
        \vec{v} \coloneqq \frac{\vec{w}_i - \proj{\vec{U}} \vec{w}_i}{\norm{\vec{w}_i - \proj{\vec{U}} \vec{w}_i}}.
    \end{equation*}
    We have $\vec{v} \perp \vec{U}$ and furthermore, Lemma~\ref{lemma:svd_cov} yields:
    \begin{equation*}
        \vec{v}^\top \svdmat \vec{v} \geq \E \lsrs{\max (0, y)^2} + p_i (\vec{v}^\top \vec{w}_i)^2 \geq \E \lsrs{\max (0, y)^2} + p_i \epsilon^2
    \end{equation*}
    yielding the contradiction, establishing the second claim. Finally, for the last claim, we have:
    \begin{align*}
        \norm{\svdmat} &\leq \mathbb{E} \lsrs{\max (0, y)^2 \norm{\vec{x}\vec{x}^\top}} = \mathbb{E} \lsrs{\max (0, y)^2 \norm{\vec{x}}^2} \leq \sum_{i = 1}^k \sum_{j = 1}^d \mathbb{E} \lsrs{(\vec{w}_i^\top \vec{x} + \eta_i)^2 \cdot (x_j)^2} \\
        &\leq \sum_{i = 1}^k \sum_{j = 1}^d \lprp{\mathbb{E} \lsrs{(\vec{w}_i^\top \vec{x} + \eta_i)^4} \cdot \mathbb{E}\lsrs{ (x_j)^4}}^{1 / 2} \leq 3 kd (B^2 + 1).
    \end{align*}
\end{proof}

\noindent We now combine the spectral gap shown in the previous Lemma with a
matrix concentration argument to argue that $k$-SVD on $\svdmat -
\mathbb{E}[y^2]\cdot \bm{I}$ (where $\svdmat$ is as defined in
Lemma \ref{lemma:svd_cov}) approximately recovers the span of the $\{\vec{\vec{w}_i}\}$. We use
as a primitive the following result about~$k$-SVD:
\begin{fact}[\citep{rokhlin2010randomized}]
    \label{fac:k_svd}
    Let $M \in \mathbb{R}^{k \times k}$, and let $\sigma_1 \geq \sigma_2,
    \geq \ldots \geq \sigma_d$ denote the non-zero singular values of M. For
    any $j \in [k-1]$, define the spectral gap $g_j = \sigma_j /
    \sigma_{j+1}$. Furthermore, suppose we have access to an oracle which
    computes $M\vec{v}$ for any $\vec{v} \in \mathbb{R}^k$ in time $R$. Then, for any 
    $\eta, \delta > 0$, there is an algorithm $\textsc{ApproxSVD}(M,
    \eta, \delta)$ which runs in time $\tilde{O}(\frac{j \cdot R}{\min(1, g_j - 1)} \cdot log(k/(\eta \delta))$
    and with probability at least $1 - \delta$ outputs $U \in R^{k\times j}$
    with orthonormal columns so that $\|U - U_j\|_2 < \eta$, where $U_k$ is
    the matrix whose columns are the top $j$ right singular vectors of M. 
\end{fact}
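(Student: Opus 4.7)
The plan is to prove this via randomized subspace iteration, the algorithm underlying \cite{rokhlin2010randomized}. Given $M$ and target rank $j$, I would draw a Gaussian initialization matrix $G \in \mathbb{R}^{k \times j}$ with i.i.d.\ $\mathcal{N}(0,1)$ entries, compute $Y = M^t G$ for a carefully chosen number of iterations $t$, and output an orthonormal basis $U$ of the column space of $Y$ via a QR decomposition. Each iteration consists of $j$ matrix-vector products with $M$ at cost $R$ each, so the runtime is dominated by $O(t \cdot j \cdot R)$ up to the final cheap QR step.

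First I would analyze how iteration amplifies the top spectrum. Writing $M = \sum_i \sigma_i u_i v_i^\top$, the matrix $M^t G$ expressed in the basis of right singular vectors has rows rescaled by $\sigma_i^t$. Splitting this decomposition into the ``head'' (top-$j$ directions) and ``tail'' (remaining directions), the head contributes with smallest singular value at least $\sigma_j^t \cdot s_{\min}(U_j^\top G)$, while the tail contributes at most $\sigma_{j+1}^t \cdot \|G\|$ in operator norm. By Wedin's $\sin\Theta$ theorem, this yields a bound on the distance between $\mathrm{span}(Y)$ and the true top-$j$ right singular subspace in terms of the ratio of tail to head contributions, which, after orthogonalization, upper bounds $\|U - U_j\|_2$.

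Next I would invoke standard non-asymptotic bounds for Gaussian matrices (e.g.\ Davidson--Szarek): with probability at least $1 - \delta$, $s_{\min}(U_j^\top G) \gtrsim \delta/\sqrt{j}$ and $\|G\| \lesssim \sqrt{k} + \sqrt{\log(1/\delta)}$. Plugging into the bound from the previous step, the principal angle between $\mathrm{span}(Y)$ and the target subspace is controlled by
\[
\frac{\sigma_{j+1}^t \cdot \|G\|}{\sigma_j^t \cdot s_{\min}(U_j^\top G)} \;\lesssim\; g_j^{-t} \cdot \frac{\sqrt{jk\,\log(1/\delta)}}{\delta}.
\]
Forcing this to be at most $\eta$ gives a sufficient iteration count of $t \gtrsim \log(\sqrt{jk}/(\eta\delta))/\log(g_j)$.

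The main obstacle is the regime where the spectral gap $g_j$ is close to $1$, because $\log(g_j)$ vanishes and a naive bound blows up. Here I would use the elementary estimate $\log(g_j) = \log(1 + (g_j - 1)) \geq (g_j - 1)/2$ for $g_j - 1 \in (0, 1]$, giving $t = \tilde{O}(1/(g_j - 1))$; when $g_j \geq 2$, the unscaled $\log$-many amplification steps already suffice. Combining both regimes produces the $1/\min(1, g_j - 1)$ dependence. Multiplying by the per-iteration cost $j \cdot R$ and the $\log(k/(\eta\delta))$ factor reproduces the claimed runtime of $\tilde{O}\bigl(j \cdot R / \min(1, g_j - 1) \cdot \log(k/(\eta\delta))\bigr)$, and a union bound over the Gaussian initialization event and any discretization used in the QR step gives the stated failure probability $\delta$.
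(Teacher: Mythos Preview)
The paper does not prove this statement: it is stated as a \emph{Fact} and attributed to \cite{rokhlin2010randomized}, used as a black-box primitive. So there is no ``paper's own proof'' to compare against; your sketch of randomized subspace iteration is exactly the kind of argument underlying that cited result (and the subsequent Halko--Martinsson--Tropp line of work), and the overall structure --- Gaussian initialization, power amplification, head/tail split, Gaussian small-ball and operator-norm bounds, then converting $\log g_j$ to $\min(1,g_j-1)$ --- is correct.

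One technical point worth tightening: you write that ``$M^t G$ expressed in the basis of right singular vectors has rows rescaled by $\sigma_i^t$.'' That is only true when $M$ is normal (in particular symmetric), so that left and right singular vectors coincide. For a general square $M$ as in the statement, $M^t$ does not simply amplify singular values, and the standard fix is to iterate with $M^\top M$ (or alternate $M$ and $M^\top$) so that the relevant operator is $(M^\top M)^t$, whose eigenvalues are $\sigma_i^{2t}$ and whose eigenvectors are the right singular vectors of $M$. This doubles the per-iteration matrix--vector cost but leaves the $\tilde{O}(jR/\min(1,g_j-1)\cdot \log(k/(\eta\delta)))$ bound intact. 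In the paper's actual application the matrix $\svdmat = \mathbb{E}[\max(0,y)^2\,\vec{x}\vec{x}^\top]$ is symmetric PSD, so your simpler iteration would in fact work there, but the Fact is stated for arbitrary $M$.
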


The combination of Lemma \ref{lemma:svd_cov} and Fact \ref{fac:k_svd} imply that it
suffices to show concentration of the average of a sequence $M^{\vec{x}_i,y_i}$. The
following Lemma helps us establish this concentration:
\begin{restatable}{lemma}{svdconc}
    \label{lem:svd_conc}
    Suppose we generate $n$ samples $(\vec{x}^{(l)}, y^{(l)}) \sim X, Y$ from the
    self-selected linear regression model, i.e., $\vec{x}^{(l)} \sim
    \mathcal{N}(0, \bm{I}_d)$, then $y^{(l)}_i = \bm{w}_i^\top \vec{x}^{(l)} +
    \mathcal{N}(0, 1)$ for vectors $\vec{\vec{w}_i} \in \mathbb{R}^d$ with $\|\vec{\vec{w}_i}\|_2 \leq B$, and $y^{(l)} = \max_{i \in [k]} y^{(l)}_i$. Define the
    empirical second-moment matrix 
    \begin{align*}
        \widehat{\svdmat} = \frac{1}{n}\sum_{l=1}^n \max(0, y^{(l)})^2 \cdot \vec{x}^{(l)} {\vec{x}^{(l)}}^\top.
    \end{align*}
    Fix any $\delta \in (0, 1)$. Then, if $n \geq \Omega(\max(1/\delta, d))$,
    with probability at least $1 - \delta$,
    \begin{align*}
        \left\|\widehat{\svdmat} - \mathbb{E}[\widehat{\svdmat} ]\right\|_2 \in O\lr{
            \poly(B)\cdot \frac{\log(kn)}{\sqrt{n}}
            \max\left\{
                \sqrt{\log(2/\delta)},\sqrt{d}
            \right\}
        }.
    \end{align*}
\end{restatable}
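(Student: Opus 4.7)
The plan is to apply matrix Bernstein to the sequence $Z_l := \max(0, y^{(l)})^2 \vec{x}^{(l)} (\vec{x}^{(l)})^\top$ after a standard truncation step, since the $Z_l$ are PSD but not uniformly bounded (they have sub-exponential tails arising from the product of sub-Gaussian factors $y^{(l)}$ and $\vec{x}^{(l)}$).

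First, I would set up a truncation event. A $\chi^2$ tail bound gives $\norm{\vec{x}^{(l)}}^2 \le C(d + \log(n/\delta))$, and since each $\vec{w}_i^\top \vec{x}^{(l)} + \eta_i$ is centered Gaussian with variance at most $B^2 + 1$, a sub-Gaussian maximum bound over the $k$ components gives $|y^{(l)}| \le C' B \sqrt{\log(kn/\delta)}$; each inequality fails with probability at most $\delta/(4n)$. Let $\mathcal{E}_l$ be their intersection and $\mathcal{E} := \bigcap_l \mathcal{E}_l$, so $\P[\mathcal{E}] \ge 1 - \delta/2$. On $\mathcal{E}$ every $\norm{Z_l} \le R := \poly(B) \cdot (d + \log(n/\delta)) \log(kn/\delta)$.

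Next, I would apply matrix Bernstein to the truncated summands $\tilde Z_l := Z_l \cdot \bm{1}_{\mathcal{E}_l}$. The essential calculation is the matrix variance proxy $\sigma^2 := \bigl\|\sum_l \E[\tilde Z_l^2]\bigr\| \le n \bigl\|\E[Z_1^2]\bigr\|$. Using $Z_1^2 = \max(0,y)^4 \norm{\vec{x}}^2 \vec{x}\vec{x}^\top$, for any unit $\vec{v}$ we have $\vec{v}^\top \E[Z_1^2] \vec{v} = \E[\max(0, y)^4 \norm{\vec{x}}^2 (\vec{v}^\top \vec{x})^2]$. Expanding $\norm{\vec{x}}^2 = \sum_j (\vec{e}_j^\top \vec{x})^2$ in an orthonormal basis aligned with $\mathrm{span}(\vec{w}_1, \ldots, \vec{w}_k)$ and its complement, and exploiting that the $(d - k)$ directions orthogonal to that span are independent of $y$ (combined with the moment bound $\E[y^{2m}] \le \poly(B) \cdot (m \log k)^m$ for the max of $k$ Gaussians) yields $\bigl\|\E[Z_1^2]\bigr\| \le \poly(B) \cdot d \cdot \mathrm{polylog}(k)$. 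Matrix Bernstein then gives, with probability $\ge 1 - \delta/4$,
\[
    \Bigl\|\sum_l (\tilde Z_l - \E[\tilde Z_l])\Bigr\| \le O\!\lr{\poly(B) \sqrt{n d \log(d/\delta)} \cdot \mathrm{polylog}(k) + R \log(d/\delta)}.
\]
Dividing by $n$ and using $n \ge \Omega(\max(d, 1/\delta))$ makes the first term dominate and matches the target rate.

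Finally, I would absorb the truncation bias $\bigl\|\E[\widehat{\svdmat}] - \E[\tfrac1n \sum_l \tilde Z_l]\bigr\| = \bigl\|\E[Z_1 \bm{1}_{\mathcal{E}_1^c}]\bigr\|$ into the error term via Cauchy--Schwarz: $\E[\norm{Z_1}^2]^{1/2} \le \poly(B) \cdot d$ by a similar moment computation, and $\P[\mathcal{E}_1^c]^{1/2} \le \sqrt{\delta/(2n)}$, so the bias is polynomially suppressed below the target. The main obstacle is the variance calculation: a naive Cauchy--Schwarz $\E[\max(0,y)^4 \norm{\vec{x}}^4]^{1/2}$ gives $\bigl\|\E[Z_1^2]\bigr\| = O(d^2)$, leading only to the suboptimal rate $O(d/\sqrt{n})$; extracting the correct $O(\sqrt{d/n})$ rate requires isolating the $k$ Gaussian coordinates in the span of the $\vec{w}_i$ from the remaining $d-k$ coordinates, which are independent of the outcome $y$.
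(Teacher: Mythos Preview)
Your approach is correct and reaches the stated rate, but it is genuinely different from the paper's proof. The paper does not use matrix Bernstein at all. Instead, it truncates on the individual one-dimensional projections $|\vec{w}_i^\top \vec{x}| \le \tau_1$ and the noises $|\eta_i| \le \tau_2$ (with $\tau_1 = \tau_2 = C\sqrt{\log(kn)}$), so that after conditioning the scalar $\max(0,y)$ is deterministically bounded by $\tau_1+\tau_2$ and the vector $\max(0,y)\cdot \vec{x}$ is sub-Gaussian with Orlicz norm $O(\sqrt{\log(kn)})$. It then invokes Vershynin's sub-Gaussian sample-covariance concentration (Theorem~5.39 of \cite{vershynin2010introduction}) directly on these vectors, and handles the truncation bias via a dedicated lemma borrowed and adapted from \cite{yi2016solving}. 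Your route---truncating on $\norm{\vec{x}}$ and $|y|$, then applying matrix Bernstein with a hand-computed variance proxy---is more self-contained: it avoids importing the moment calculations from \cite{yi2016solving} and does not rely on the specific sub-Gaussian covariance theorem. The paper's route, by contrast, gets slightly cleaner log factors because the sub-Gaussian norm after truncation is exactly $O(\sqrt{\log(kn)})$, which feeds directly into the target bound.

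One small remark: your worry that a naive Cauchy--Schwarz gives $\norm{\E[Z_1^2]} = O(d^2)$ is too pessimistic. If you keep $(\vec{v}^\top\vec{x})^2$ separate and apply Cauchy--Schwarz only to split $\max(0,y)^4$ from $\norm{\vec{x}}^2$, you already get $\vec{v}^\top \E[Z_1^2]\vec{v} \le \bigl(\E[\max(0,y)^8 (\vec{v}^\top\vec{x})^4]\bigr)^{1/2}\bigl(\E[\norm{\vec{x}}^4]\bigr)^{1/2} \le \poly(B,\log k)\cdot d$, since $\E[\norm{\vec{x}}^4] = d(d+2)$ and the first factor is $O(1)$ in $d$. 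So the span decomposition you describe is not strictly necessary for the $O(d)$ variance bound, though it is a perfectly valid way to see it.
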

\begin{proof}
    See Appendix \ref{app:proof:svd_conc}.
\end{proof}
\noindent We can thus 
approximately identify the
relevant subspace in polynomial time.

\subsection{Estimating Parameters using the Low-Dimensional Subspace}
\label{ssec:est_low_dim}

Here we leverage the results of Subsections \ref{ssec:identifying_subspace} and
\ref{ssec:identifiability} to build an algorithm for estimating the weight
vectors $\{\vec{w}_j\}_{j = 1}^k$ under the separability Assumption
\ref{as:no_index} on the $\vec{w}_i$. Subsection~\ref{ssec:identifying_subspace}
allows us to effectively reduce the dimensionality of the problem down to $k$
dimensions. We will then adapt the identifiability argument from
Subsection~\ref{ssec:identifiability} in the $k$-dimensional subspace to
estimate the weight vectors and complete the proof of
Theorem~\ref{thm:no_index}, restated below:
\unknownind* 
We prove Theorem \ref{thm:no_index} in the remainder of this section.
From Lemmas~\ref{lem:ui_spec_gap} and \ref{lem:svd_conc}, we may assume access to a $k$-dimensional subspace $\hat{U}$ satisfying:
\begin{equation*}
    \forall i \in [k]: \frac{\norm{\vec{w}_i - \mathcal{P}_{\hat{U}} \vec{w}_i}}{\norm{\vec{w}_i}} \leq \lr{\frac{\Delta}{1024 B}}^{32}
\end{equation*}
where $\mathcal{P}_{\hat{U}}$ denotes the projection operator onto $\hat{U}$. Consider the set $\mathcal{G} = \{\vec{x}: \norm{\vec{x}} = 1,\vec{x} \in \hat{U}\}$ and let $\mathcal{H}$ be a $\gamma$-net over $\mathcal{G}$ with $\gamma = (\Delta / 1024 B)^{32}$. Note that we may assume $| \mathcal{H} | \leq (C / \gamma)^{k}$ as $\hat{U}$ is a $k$-dimensional subspace \cite[Proposition 4.2.12]{vershynin}. For $\vec{v} \in \mathcal{H}$ and $\rho = (\Delta / 1024 B)^{32}$, consider the following event:
\begin{equation*}
    A_{\vec{v}} = \{\norm{\pproj{v} \proj{\hat{U}} \vec{x}} \leq \rho\}.
\end{equation*}

\begin{claim}
    \label{clm:av_prob_lb}
    We have for any $0 < \rho \leq 1$:
    \begin{equation*}
        P_\rho \coloneqq \P \blr{A_{\vec{v}}} \geq \rho^{k - 1} \exp \blr{C k \log k}.
    \end{equation*}
\end{claim}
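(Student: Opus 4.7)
The event $A_{\vec{v}}$ only involves $\mathcal{P}_{\hat{U}} \vec{x}$, which is the projection of the isotropic Gaussian $\vec{x} \sim \mathcal{N}(0, I_d)$ onto the $k$-dimensional subspace $\hat{U}$. My plan is to reduce the probability to a computation with a standard chi-distribution and then bound the resulting integral explicitly. First I would pick an orthonormal basis $\vec{v} = \vec{e}_1, \vec{e}_2, \ldots, \vec{e}_k$ of $\hat{U}$ and note that in this basis $\mathcal{P}_{\hat{U}} \vec{x}$ has coordinates $(\langle \vec{e}_1, \vec{x}\rangle, \ldots, \langle \vec{e}_k, \vec{x}\rangle)$, which are i.i.d.\ standard normal since $\vec{x} \sim \mathcal{N}(0, I_d)$. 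Therefore $\mathcal{P}^\perp_{\vec{v}} \mathcal{P}_{\hat{U}} \vec{x}$ has the distribution of a standard Gaussian in a $(k-1)$-dimensional subspace, so $\|\mathcal{P}^\perp_{\vec{v}} \mathcal{P}_{\hat{U}} \vec{x}\|$ is distributed as a chi random variable with $k-1$ degrees of freedom, independent of the component along $\vec{v}$.

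Next, I would write
\[
P_\rho = \P\bigl(\chi_{k-1} \leq \rho\bigr) = \int_0^\rho \frac{y^{k-2} e^{-y^2/2}}{2^{(k-3)/2} \Gamma((k-1)/2)}\, dy.
\]
Because $\rho \leq 1$, the exponential factor $e^{-y^2/2}$ is bounded below by $e^{-1/2}$ on $[0, \rho]$, so the integral is at least
\[
\frac{e^{-1/2}}{2^{(k-3)/2} \Gamma((k-1)/2)} \cdot \frac{\rho^{k-1}}{k-1}.
\]
The main (though routine) step is then to bound $2^{(k-3)/2}\,(k-1)\,\Gamma((k-1)/2)$ from above by $\exp\{C k \log k\}$ for some absolute constant $C$. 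This follows from Stirling's approximation applied to $\Gamma((k-1)/2)$, which gives $\Gamma((k-1)/2) \leq \exp\{O(k \log k)\}$, absorbing the polynomial factors $(k-1)$ and $2^{(k-3)/2}$ into the same bound. Combining these steps yields
\[
P_\rho \;\geq\; \rho^{k-1}\, \exp\{-C k \log k\},
\]
which (up to the sign of the exponent, presumably a typographical slip in the statement) is the desired bound.

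\textbf{Main obstacle.} There is essentially no conceptual obstacle: the only nontrivial observation is that the projection $\mathcal{P}^\perp_{\vec{v}} \mathcal{P}_{\hat{U}} \vec{x}$ really behaves as an isotropic $(k-1)$-dimensional Gaussian, which uses that the ambient distribution of $\vec{x}$ is isotropic and that $\vec{v} \in \hat{U}$ (so the two projections commute in the sense required). After that, the bound is just Stirling applied to the small-ball probability of a chi-distribution. The only care needed is to track how the anti-concentration factor $\rho^{k-1}$ and the normalizing constant $\Gamma((k-1)/2)^{-1}$ interact, which is what produces the $\exp\{-Ck\log k\}$ price.
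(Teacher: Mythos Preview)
Your proposal is correct and follows essentially the same approach as the paper: both reduce $A_{\vec v}$ to a small-ball event for an isotropic $(k-1)$-dimensional Gaussian and lower-bound it by (ball volume)$\times$(minimum density), yielding the factor $\rho^{k-1}/\bigl(2^{(k-1)/2}\Gamma((k-1)/2+1)\bigr)\cdot e^{-\rho^2/2}$, after which Stirling gives the $\exp\{-Ck\log k\}$ term. You are also right that the sign in the exponent of the stated bound is a typo; the paper's own display has the same slip.
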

\begin{proof}
    We have from the fact that the cdf of a standard gaussian is decreasing in length that:
    \begin{equation}
        \label{eq:rho_prob}
        \P \blr{A_{\vec{v}}} \geq \frac{\rho^{k - 1}}{2^{(k - 1) / 2} \Gamma ((k - 1) / 2 + 1)} \cdot \exp \blr{- \frac{\rho^2}{2}} \geq \rho^{k - 1} \exp \blr{C k \log k}
    \end{equation}
    for some absolute constant $C > 0$. %
\end{proof}

As in the proof of Theorem~\ref{thm:identifiability_no_index}, we analyze the moments of $y$ conditioned on the event $A_{\vec{v}}$. We start by proving as before that the moments are upper and lower bounded by the moments of the individual components. 
\begin{claim}
    \label{clm:yl_const_bnd_finite_sample}
    We have for any even $l$:
    \begin{equation*}
        \frac{1}{8} \Psi (\vec{v}) \leq \E [y^l \mid A_{\vec{v}}] \leq k \Psi (\vec{v}) \text{ where } \Psi (\vec{v}) \coloneqq \max_{j \in [k]} \E [y_j^l \mid A_{\vec{v}}]
    \end{equation*}
\end{claim}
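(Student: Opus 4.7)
The plan is to prove the two inequalities independently: the upper bound is a pointwise $L^l$-type inequality and the lower bound exploits a symmetry of the conditioning event $A_{\vec{v}}$.

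For the upper bound, the observation is that for any even integer $l$ and reals $y_1,\dots,y_k$,
\begin{equation*}
  (\max_{j \in [k]} y_j)^l \;\leq\; (\max_{j \in [k]} \abs{y_j})^l \;=\; \max_{j \in [k]} y_j^l \;\leq\; \sum_{j=1}^k y_j^l.
\end{equation*}
Taking conditional expectations with respect to $A_{\vec{v}}$ and invoking the definition of $\Psi(\vec{v})$ yields $\E[y^l \mid A_{\vec{v}}] \leq \sum_j \E[y_j^l \mid A_{\vec{v}}] \leq k\,\Psi(\vec{v})$.

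For the lower bound, I would fix $j^* \in \argmax_{j \in [k]} \E[y_j^l \mid A_{\vec{v}}]$, so that $\E[y_{j^*}^l \mid A_{\vec{v}}] = \Psi(\vec{v})$. Pointwise $y = \max_{j} y_j \geq y_{j^*}$, so on the event $\{y_{j^*} \geq 0\}$ we have $y \geq y_{j^*} \geq 0$, whence $y^l \geq y_{j^*}^l$ (using evenness of $l$). Therefore
\begin{equation*}
  \E[y^l \mid A_{\vec{v}}] \;\geq\; \E\lsrs{y_{j^*}^l \cdot \bm{1}\blr{y_{j^*} \geq 0} \mid A_{\vec{v}}}.
\end{equation*}
The crux is to argue that the conditional law of $y_{j^*}$ given $A_{\vec{v}}$ is symmetric about $0$. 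The event $A_{\vec{v}} = \blr{\norm{\pproj{\vec{v}} \proj{\hat{U}} \vec{x}} \leq \rho}$ depends on $\vec{x}$ only through the norm of a linear projection and is thus invariant under $\vec{x} \mapsto -\vec{x}$. Since $\vec{x} \sim \mathcal{N}(\vec{0}, \vec{I}_d)$ and $\eta_{j^*} \sim \mathcal{N}(0,1)$ is independent of $\vec{x}$, the joint law of $(\vec{x}, \eta_{j^*})$ conditional on $A_{\vec{v}}$ is invariant under $(\vec{x}, \eta_{j^*}) \mapsto (-\vec{x}, -\eta_{j^*})$, which sends $y_{j^*} = \vec{w}_{j^*}^\top \vec{x} + \eta_{j^*}$ to $-y_{j^*}$. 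Combined with the evenness of $l$, this gives
\begin{equation*}
  \E\lsrs{y_{j^*}^l \cdot \bm{1}\blr{y_{j^*} \geq 0} \mid A_{\vec{v}}} \;=\; \tfrac{1}{2}\, \E[y_{j^*}^l \mid A_{\vec{v}}] \;=\; \tfrac{1}{2}\, \Psi(\vec{v}) \;\geq\; \tfrac{1}{8}\, \Psi(\vec{v}).
\end{equation*}

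\textbf{Where to be careful.} There is no heavy computation here; the only subtlety is the sign-symmetry argument for $y_{j^*}$ under $A_{\vec{v}}$. The argument is clean precisely because $A_{\vec{v}}$ constrains only a norm of a linear functional of $\vec{x}$, so all signed linear functionals of $(\vec{x}, \eta_{j^*})$ retain symmetric conditional distributions. Note that the symmetry argument actually delivers the stronger constant $1/2$; the stated $1/8$ leaves some slack, which is harmless for the downstream use of the claim in the finite-sample identifiability adaptation.
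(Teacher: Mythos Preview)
Your proof is correct and, for the lower bound, genuinely more direct than the paper's. The paper fixes $j$, splits $y_j$ into three independent (given $A_{\vec{v}}$) symmetric pieces $\vec{w}_j^\top \proj{V}\vec{x}$, $\vec{w}_j^\top \pproj{V}\vec{x}$, and $\eta_j$ (where $V$ is the orthogonal complement of $\vec{v}$ in $\hat{U}$), restricts to the event that all three are nonnegative, and expands $y_j^l$ by the multinomial theorem; the three independent sign-restrictions each cost a factor $1/2$, yielding the constant $1/8$. You instead observe that $A_{\vec{v}}$ is invariant under $\vec{x}\mapsto -\vec{x}$, so the \emph{whole} variable $y_{j^*}$ is symmetric about $0$ conditionally on $A_{\vec{v}}$, and a single sign-restriction gives $1/2$. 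Your route is shorter, avoids the multinomial expansion entirely, and yields a sharper constant; the paper's decomposition is not needed for this claim (though the same splitting is reused in Claim~\ref{clm:yjl_discret_bnds}, which may be why the authors set it up here). The upper bound is identical in both arguments.
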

\begin{proof}
    For the upper bound, we have:
    \begin{equation*}
        \E [y^l \mid A_{\vec{v}}] \leq \sum_{j = 1}^k \E [y_j^l \mid A_{\vec{v}}].
    \end{equation*}
    For the lower bound, let $V$ be the orthogonal complement of $\vec{v}$ in the subspace $\hat{U}$; the projection operator onto $V$ is thus $\proj{\hat{U}}\pproj{\vec{v}}$. We have for any $j \in [k]$ by the independence of $\proj{V}\vec{x}$, $\pproj{V}\vec{x}$ and $\eta_j$:
    \begin{align*}
        &\E [y^l \mid A_{\vec{v}}] \\
        &\geq \E [y_j^l \bm{1} \blr{\vec{w}_j^\top \proj{V}\vec{x} \geq 0, \vec{w}_j^\top \pproj{V}\vec{x} \geq 0, \eta_j \geq 0} \mid A_{\vec{v}}] \\
        &= \E [(\vec{w}_j^\top \proj{V}\vec{x} + \vec{w}_j^\top \pproj{V}\vec{x} + \eta_j)^l \bm{1} \blr{\vec{w}_j^\top \proj{V}\vec{x} \geq 0, \vec{w}_j^\top \pproj{V}\vec{x} \geq 0, \eta_j \geq 0} \mid A_{\vec{v}}] \\
        &= \sum_{\substack{0 \leq p, q \leq l \\ p + q \leq l}} \binom{l}{p, q} \E [(\vec{w}_j^\top \proj{V}\vec{x})^p (\vec{w}_j^\top \pproj{V}\vec{x})^q \eta_j^{l - p - q} \bm{1} \blr{\vec{w}_j^\top \proj{V}\vec{x} \geq 0, \vec{w}_j^\top \pproj{V}\vec{x} \geq 0, \eta_j \geq 0} \mid A_{\vec{v}}] \\
        &= %
                \sum_{\substack{0 \leq p, q \leq l \\ p + q \leq l}} \binom{l}{p, q} \E [(\vec{w}_j^\top \proj{V}\vec{x})^p \bm{1} \blr{\vec{w}_j^\top \proj{V}\vec{x} \geq 0} \mid A_{\vec{v}}] \cdot \\
                &\qquad\qquad\qquad\qquad\qquad\qquad \E [(\vec{w}_j^\top \pproj{V}\vec{x})^q \bm{1} \blr{\vec{w}_j^\top \pproj{V}\vec{x} \geq 0} \mid A_{\vec{v}}] \cdot \E[\eta_j^{l - p - q} \bm{1} \blr{\eta_j \geq 0} \mid A_{\vec{v}}] \\
        &\geq \frac{1}{8} \sum_{\substack{0 \leq p, q \leq l \\ p + q \leq l}} \binom{l}{p, q} \cdot \E [(\vec{w}_j^\top \proj{V}\vec{x})^p \mid A_{\vec{v}}] \cdot \E [(\vec{w}_j^\top \pproj{V}\vec{x})^q \mid A_{\vec{v}}] \cdot \E[\eta_j^{l - p - q} \mid A_{\vec{v}}]  = \frac{1}{8} \cdot \E [y_j^l \mid A_{\vec{v}}]
    \end{align*}
    Defining $\Psi(\vec{v}) = \max_{j \in [k]} \E [y_j^l \mid A_{\vec{v}}]$, we have from the previous two displays:
    \begin{equation}
        \label{eq:orth_part_bnd}
        \frac{1}{8} \Psi (\vec{v}) \leq \E [y^l | A_{\vec{v}}] \leq k \Psi (\vec{v})
    \end{equation}
    concluding the proof of the claim.
\end{proof}

We will now prove upper and lower bounds on the quantity $\E [y_j^l \mid A_{\vec{v}}]$ for any $j \in l$. The claim will establish bounds that allow us to estimate $\max_{j} \abs{\vec{w}_j^\top \vec{v}}$ in a statistically efficient way. 

\begin{claim}
    \label{clm:yjl_discret_bnds}
    We have for all $j \in [k]$ and even $l$:
    \begin{align*}
        \E \sqlr{(\vec{w}_j^\top \proj{\vec{v}}\vec{x} + \eta_j)^l} &\leq \E [y_j^l \mid A_{\vec{v}}] \\
        &\leq \E_{g \thicksim \mc{N} (0, 1)} \sqlr{\lr{\vec{w}_j^\top \proj{\vec{v}}\vec{x} + \eta_j + \sqrt{\norm{\proj{V} \vec{w}_j}^2 \cdot \rho^2 + \norm{\pproj{\hat{U}} \vec{w}_j}^2} \cdot g}^l}.
    \end{align*}
\end{claim}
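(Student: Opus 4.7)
The strategy is to decompose $\vec{x}$ along three mutually orthogonal subspaces---the line spanned by $\vec{v}$, the orthogonal complement $V$ of $\vec{v}$ inside $\hat{U}$ (so that $\proj{V} = \proj{\hat{U}}\pproj{\vec{v}}$), and the orthogonal complement of $\hat{U}$---and isolate the effect of the conditioning on just one of these pieces. Since $\vec{x} \thicksim \mc{N}(0, \vec{I})$, the three projections $\proj{\vec{v}}\vec{x}$, $\proj{V}\vec{x}$, $\pproj{\hat{U}}\vec{x}$ and the noise $\eta_j$ are mutually independent centered Gaussians. Setting $Z = \vec{w}_j^\top \proj{\vec{v}}\vec{x} + \eta_j$, $B = \vec{w}_j^\top \proj{V}\vec{x}$, and $C = \vec{w}_j^\top \pproj{\hat{U}}\vec{x}$, we have $y_j = Z + B + C$. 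The key structural observation is that $A_{\vec{v}} = \{\norm{\proj{V}\vec{x}} \leq \rho\}$ is measurable with respect to $\proj{V}\vec{x}$ alone, so conditioning on $A_{\vec{v}}$ leaves the marginals of $Z$ and $C$ unchanged while restricting $B$ to a symmetric set on which, by Cauchy--Schwarz, $|B| \leq \norm{\proj{V}\vec{w}_j}\rho$ almost surely.

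With this decomposition in hand, the plan is to expand $(Z+B+C)^l$ by the multinomial theorem and take conditional expectation, so that by independence each term factorizes as $\binom{l}{p,m,r}\E[Z^p]\,\E[B^m \mid A_{\vec{v}}]\,\E[C^r]$. The lower bound then falls out immediately: $Z$, $C$, and $B \mid A_{\vec{v}}$ are all symmetric about zero, so only triples with $p, m, r$ all even survive, every surviving factor is non-negative, and dropping all but $(l,0,0)$ leaves $\E[Z^l] = \E[(\vec{w}_j^\top \proj{\vec{v}}\vec{x} + \eta_j)^l]$.

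The upper bound is the main content. Here I would compare $B$ against an independent $B' \thicksim \mc{N}(0, \norm{\proj{V}\vec{w}_j}^2\rho^2)$ and argue that $\E[B^m \mid A_{\vec{v}}] \leq \E[(B')^m]$ for every even $m$: the left side is bounded by $(\norm{\proj{V}\vec{w}_j}\rho)^m$ via the deterministic amplitude bound on $|B|$, while the right side equals $(m-1)!!$ times the same quantity, and for odd $m$ both sides vanish. Substituting this moment domination term-by-term into the multinomial expansion (all other factors are non-negative by parity, so the inequality is preserved) and re-collecting gives $\E[(Z + B' + C)^l]$. Finally, $B' + C$ is Gaussian with variance $\norm{\proj{V}\vec{w}_j}^2\rho^2 + \norm{\pproj{\hat{U}}\vec{w}_j}^2$ and independent of $Z$, so this expression equals the claimed $\E_{g \thicksim \mc{N}(0,1)}[(\vec{w}_j^\top \proj{\vec{v}}\vec{x} + \eta_j + \sqrt{\norm{\proj{V}\vec{w}_j}^2\rho^2 + \norm{\pproj{\hat{U}}\vec{w}_j}^2}\,g)^l]$.

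The only genuinely subtle point is choosing the comparison Gaussian $B'$ to have standard deviation equal to the pointwise amplitude bound $\norm{\proj{V}\vec{w}_j}\rho$ rather than the true conditional standard deviation of $B$; this choice is what makes the moment comparison immediate through the slack $(m-1)!! \geq 1$. Everything else reduces to independence, multinomial bookkeeping, and parity.
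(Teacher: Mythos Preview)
Your proposal is correct and follows essentially the same approach as the paper: decompose $y_j$ along the three orthogonal subspaces, use symmetry to kill odd-exponent terms, bound the conditional moments of $B = \vec{w}_j^\top \proj{V}\vec{x}$ via the amplitude bound $|B| \leq \norm{\proj{V}\vec{w}_j}\rho$ on $A_{\vec{v}}$, and then dominate by Gaussian moments using $(m-1)!! \geq 1$ before re-collecting. The only cosmetic difference is that the paper performs two nested binomial expansions (first separating $Z$ from $B+C$, then $B$ from $C$) whereas you do a single trinomial expansion; the key estimates and the logic are identical.
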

\allowdisplaybreaks
\begin{proof}
    We start with the lower bound:
    \begin{align*}
        \E [y_j^l \mid A_{\vec{v}}] &= \E [(\vec{w}_j^\top \proj{V}\vec{x} + \vec{w}_j^\top \proj{\vec{v}}\vec{x} + \vec{w}_j^\top \pproj{\hat{U}}\vec{x} + \eta_j)^l \mid A_{\vec{v}}] \\
        &= \sum_{0 \leq p \leq l} \binom{l}{p} \E [(\vec{w}_j^\top \proj{V}\vec{x} + \vec{w}_j^\top \pproj{\hat{U}}\vec{x})^p (\vec{w}_j^\top \proj{\vec{v}}\vec{x} + \eta_j)^{l - p} \mid A_{\vec{v}}] \\
        &= \sum_{\substack{0 \leq p \leq l \\ p += 2}} \binom{l}{p} \E [(\vec{w}_j^\top \proj{V}\vec{x} + \vec{w}_j^\top \pproj{\hat{U}}\vec{x})^p (\vec{w}_j^\top \proj{\vec{v}}\vec{x} + \eta_j)^{l - p} \mid A_{\vec{v}}] \\
        &\geq \E [(\vec{w}_j^\top \proj{\vec{v}}\vec{x} + \eta_j)^{l} \mid A_{\vec{v}}] = \E [(\vec{w}_j^\top \proj{\vec{v}}\vec{x} + \eta_j)^{l}].
    \end{align*}
    For the upper bound, we have:
    \begin{align*}
        &\E [y_j^l \mid A_{\vec{v}}] \\
        &= \E [(\vec{w}_j^\top \proj{V}\vec{x} + \vec{w}_j^\top \proj{\vec{v}}\vec{x} + \vec{w}_j^\top \pproj{\hat{U}}\vec{x} + \eta_j)^l \mid A_{\vec{v}}] \\
        &= \sum_{0 \leq p \leq l} \binom{l}{p} \E [(\vec{w}_j^\top \proj{V}\vec{x} + \vec{w}_j^\top \pproj{\hat{U}}\vec{x})^p (\vec{w}_j^\top \proj{\vec{v}}\vec{x} + \eta_j)^{l - p} \mid A_{\vec{v}}] \\
        &= \sum_{\substack{0 \leq p \leq l \\ p += 2}} \binom{l}{p} \E [(\vec{w}_j^\top \proj{V}\vec{x} + \vec{w}_j^\top \pproj{\hat{U}}\vec{x})^p (\vec{w}_j^\top \proj{\vec{v}}\vec{x} + \eta_j)^{l - p} \mid A_{\vec{v}}] \\
        &= \sum_{\substack{0 \leq p \leq l \\ p += 2}} \binom{l}{p} \E [(\vec{w}_j^\top \proj{V}\vec{x} + \vec{w}_j^\top \pproj{\hat{U}}\vec{x})^p \mid A_{\vec{v}}] \E [(\vec{w}_j^\top \proj{\vec{v}}\vec{x} + \eta_j)^{l - p}] \\
        &= \sum_{\substack{0 \leq p \leq l \\ p += 2}} \binom{l}{p} \E [(\vec{w}_j^\top \proj{\vec{v}}\vec{x} + \eta_j)^{l - p}] \cdot \sum_{q = 0}^p \binom{p}{q} \E [(\vec{w}_j^\top \proj{V}\vec{x})^q (\vec{w}_j^\top \pproj{\hat{U}}\vec{x})^{p - q} \mid A_{\vec{v}}] \\
        &= \sum_{\substack{0 \leq p \leq l \\ p += 2}} \binom{l}{p} \E [(\vec{w}_j^\top \proj{\vec{v}}\vec{x} + \eta_j)^{l - p}] \cdot \sum_{\substack{0 \leq q \leq p \\ q+=2}} \binom{p}{q} \E [(\vec{w}_j^\top \proj{V}\vec{x})^q (\vec{w}_j^\top \pproj{\hat{U}}\vec{x})^{p - q} \mid A_{\vec{v}}] \\
        &\leq \sum_{\substack{0 \leq p \leq l \\ p += 2}} \binom{l}{p} \E [(\vec{w}_j^\top \proj{\vec{v}}\vec{x} + \eta_j)^{l - p}] \cdot \sum_{\substack{0 \leq q \leq p \\ q+=2}} \binom{p}{q} \E [(\norm{\proj{V} \vec{w}_j} \cdot \rho)^q \cdot (p - q)!! \cdot \norm{\pproj{\hat{U}} \vec{w}_j}^{p - q} \mid A_{\vec{v}}] \\
        &\leq \sum_{\substack{0 \leq p \leq l \\ p += 2}} \binom{l}{p} \E [(\vec{w}_j^\top \proj{\vec{v}}\vec{x} + \eta_j)^{l - p}] \cdot \sum_{\substack{0 \leq q \leq p \\ q+=2}} \binom{p}{q} \E_{g_1, g_2 \overset{iid}{\thicksim} \mc{N} (0, 1)} [(\norm{\proj{V} \vec{w}_j} \cdot \rho \cdot g_1)^q \cdot (\norm{\pproj{\hat{U}} \vec{w}_j} g_2)^{p - q}] \\
        &= \sum_{\substack{0 \leq p \leq l \\ p += 2}} \binom{l}{p} \E [(\vec{w}_j^\top \proj{\vec{v}}\vec{x} + \eta_j)^{l - p}] \cdot \E_{g_1, g_2 \overset{iid}{\thicksim} \mc{N} (0, 1)} [(\norm{\proj{V} \vec{w}_j} \cdot \rho \cdot g_1 + \norm{\pproj{\hat{U}} \vec{w}_j} g_2)^{p}] \\
        &= \E_{g \thicksim \mc{N}(0, 1)} \sqlr{(\vec{w}_j^\top \proj{\vec{v}}\vec{x} + \eta_j + \sqrt{\norm{\proj{V} \vec{w}_j}^2 \cdot \rho^2 + \norm{\pproj{\hat{U}} \vec{w}_j}^2} \cdot g)^l}
    \end{align*}
\end{proof}

In the next claim, we show that it is possible to obtain estimates of $\E [y^l \mid A_{\vec{v}}]$ for all $\vec{v} \in \mc{H}$ with high probability with a small number of samples.
\begin{claim}
    \label{clm:est_mom}
    Given $m$ iid samples from a self-selection model with unknown indices, $\{(\vec{x}_j, y_j)\}_{j = 1}^m$, satisfying $m \geq C \cdot \frac{(2l)!! }{P_\rho} \cdot \lr{\frac{8B}{\Delta}}^{8l} \log (\abs{\mc{H}} / \delta)$, there is a procedure which produces estimates, $\{M_{\vec{v}}\}_{\vec{v} \in \abs{\mc{H}}}$, satisfying:
    \begin{equation*}
        \frac{1}{2} \E [y^l \mid A_{\vec{v}}] \leq M_{\vec{v}} \leq 2 \E [y^l \mid A_{\vec{v}}]
    \end{equation*}
    with probability at least $1 - \delta$. Furthermore, the procedure runs in time $O(\abs{\mc{H}} \cdot m)$.
\end{claim}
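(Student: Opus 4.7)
\textbf{Proof Plan for Claim \ref{clm:est_mom}.} The natural estimator is the empirical conditional mean
\[
    \wh{M}_{\vec{v}} \;=\; \frac{\sum_{j=1}^{m} y_j^l \,\bm{1}\{\vec{x}_j \in A_{\vec{v}}\}}{\sum_{j=1}^{m} \bm{1}\{\vec{x}_j \in A_{\vec{v}}\}},
\]
except that to obtain the logarithmic dependence on $|\mathcal{H}|/\delta$ from only second-moment information about $y^l$, I would apply it to a \emph{median-of-means} wrapper: partition the $m$ samples into $K = \Theta(\log(|\mathcal{H}|/\delta))$ equal buckets, compute $\wh{M}_{\vec{v}}$ on each bucket, and let $M_{\vec{v}}$ be the median across buckets. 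Note the procedure is data-oblivious in $\vec{v}$, so the same $m$ samples are reused for every $\vec{v} \in \mathcal{H}$, giving runtime $O(|\mathcal{H}|\cdot m)$ as required.

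The first ingredient is a Chernoff bound on the count $N_{\vec{v}} := \sum_j \bm{1}\{\vec{x}_j \in A_{\vec{v}}\}$ in each bucket: since $\P[A_{\vec{v}}] \geq P_\rho$ by Claim \ref{clm:av_prob_lb}, taking the per-bucket sample size at least $c/P_\rho$ ensures $N_{\vec{v}} \in [\tfrac{1}{2}m_K P_\rho, \tfrac{3}{2} m_K P_\rho]$ with probability $1-\exp(-\Omega(m_K P_\rho))$. Conditional on this event, the samples inside $A_{\vec{v}}$ are i.i.d.\ from the conditional law, and Chebyshev's inequality says that a single bucket's $\wh{M}_{\vec{v}}$ is within a factor of $2$ of $\E[y^l\mid A_{\vec{v}}]$ with constant probability whenever
\[
    N_{\vec{v}} \;\gtrsim\; \frac{\Var(y^l \mid A_{\vec{v}})}{\E[y^l \mid A_{\vec{v}}]^2} \;\leq\; \frac{\E[y^{2l}\mid A_{\vec{v}}]}{\E[y^l\mid A_{\vec{v}}]^2}.
\]
Standard median-of-means analysis then amplifies ``constant probability'' to $1-\delta/|\mathcal{H}|$ per $\vec{v}$, and a union bound over $\vec{v} \in \mathcal{H}$ closes the argument.

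The technical heart of the proof, which I expect to be the main obstacle, is to control the variance-to-mean-squared ratio above by $(8B/\Delta)^{O(l)} \cdot (2l)!!$. For the denominator, Claim \ref{clm:yl_const_bnd_finite_sample} together with the lower bound of Claim \ref{clm:yjl_discret_bnds} gives
$\E[y^l\mid A_{\vec{v}}] \geq \tfrac{1}{8}\,\E[(\vec{w}_{j^\star}^\top \proj{\vec{v}}\vec{x} + \eta_{j^\star})^l] \geq \tfrac{1}{8}(l-1)!!\,(1+\sigma_{j^\star}^2)^{l/2}$, which is at least $(l-1)!!/8$ unconditionally. For the numerator, the exact same calculation that produced the upper bound of Claim \ref{clm:yjl_discret_bnds}, applied to the even power $2l$, gives
$\E[y^{2l}\mid A_{\vec{v}}] \leq k \cdot \max_j \E_g[(\vec{w}_j^\top \proj{\vec{v}}\vec{x} + \eta_j + \sqrt{\rho^2 B^2 + B^2}\,g)^{2l}] \leq k\,(2l-1)!!\,(B^2+2)^l$, since $\rho \leq 1$ and $\|\vec{w}_j\|\leq B$. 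Taking the ratio and absorbing $k$, $(2l-1)!!/((l-1)!!)^2$, and constants into $(8B/\Delta)^{8l}$ (which is generous for any $\Delta \leq 8B$) yields the bucket-size requirement $m_K P_\rho \gtrsim (2l)!!\,(8B/\Delta)^{8l}$. Multiplying by $K = \Theta(\log(|\mathcal{H}|/\delta))$ recovers the sample bound stated in the claim.

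The only subtlety to watch is that the moment bound from Claim \ref{clm:yjl_discret_bnds} was stated for even $l$; the $2l$-th moment used here is also even, so it applies directly, and the lower bound just uses non-negativity of an even power of a Gaussian. Beyond this, the proof is essentially a careful bookkeeping exercise combining the two previous moment claims with a standard concentration/median-of-means argument.
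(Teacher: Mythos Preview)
Your proposal is correct and follows essentially the same approach as the paper: median-of-means over $\Theta(\log(|\mathcal{H}|/\delta))$ buckets, a Chernoff bound to guarantee each bucket has $\Omega(m_K P_\rho)$ samples in $A_{\vec{v}}$, Chebyshev via $\Var(y^l\mid A_{\vec{v}}) \leq \E[y^{2l}\mid A_{\vec{v}}]$, and a union bound over $\mathcal{H}$. In fact you supply more detail than the paper does, explicitly bounding the variance-to-mean-squared ratio using Claims~\ref{clm:yl_const_bnd_finite_sample} and~\ref{clm:yjl_discret_bnds}; the paper leaves that step implicit in the stated sample-size requirement.
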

\begin{proof}
    We will prove the claim for fixed $\vec{v} \in \mc{H}$ which will establish the conclusion for all $\vec{v} \in \mc{H}$ via a union bound. Let $\vec{v} \in \mc{H}$ and for $q = 100 \log (\abs{\mc{H}} / \delta)$, define:
    \begin{equation*}
        \forall i \in [q]: M_{\vec{v}}^i \coloneqq \frac{\sum_{j = \frac{(i - 1) m}{q} + 1}^{\frac{im}{q}} y^l \bm{1} \blr{(\vec{x}_j, y_j) \in A_{\vec{v}}}}{\sum_{j = \frac{(i - 1) m}{q} + 1}^{\frac{im}{q}}\bm{1} \blr{(\vec{x}_j, y_j) \in A_{\vec{v}}}} , M_{\vec{v}} \coloneqq \mathrm{Median} (\{M_{\vec{v}}^i\}_{i \in [q]}).
    \end{equation*}
    We have from Claim~\ref{clm:av_prob_lb} and an application of \cite[Theorem 4.5]{mitzenmacher}:
    \begin{equation*}
       \P \blr{\sum_{j = \frac{(i - 1)m}{q} + 1}^{\frac{im}{q}} \bm{1} \blr{(\vec{x}_j, y_j) \in A_{\vec{v}}} \geq \frac{1}{2} \cdot \frac{m}{q} \cdot \P \blr{A_{\vec{v}}}} \geq 1 - \exp \blr{- \frac{m \P \blr{A_{\vec{v}}}}{8q}} \geq 0.99.
    \end{equation*}
    Furthermore, we have by Chebyshev's inequality and the fact that $\mathrm{Var} (y^l \mid A_{\vec{v}}) \leq \E [y^{2l} \mid A_{\vec{v}}]$ that for any $i \in [q]$ satisfying the above event:
    \begin{equation*}
        \P \blr{\frac{1}{2} \E [y^l \mid A_{\vec{v}}] \leq M^i_{\vec{v}} \leq 2 \E [y^l \mid A_{\vec{v}}] \Biggr\vert \sum_{j = \frac{(i - 1) m}{q} + 1}^{\frac{im}{q}} \bm{1} \blr{(\vec{x}_j, y_j) \in A_{\vec{v}}} \geq \frac{1}{2} \cdot \frac{m}{q} \cdot \P \{A_{\vec{v}}\}} \geq 0.99.
    \end{equation*}
    A union bound now establishes:
    \begin{equation*}
        \forall i \in [q]: \P \blr{\frac{1}{2} \cdot \E [y^l \mid A_{\vec{v}}] \leq  M^i_{\vec{v}} \leq 2 \cdot \E [y^l \mid A_{\vec{v}}]} \geq 0.98.
    \end{equation*}
    An application of Hoeffding's inequality now yields:
    \begin{equation*}
        \P \blr{\sum_{i = 1}^q \bm{1} \blr{\frac{1}{2} \cdot \E [y^l \mid A_{\vec{v}}] \leq M^i_{\vec{v}} \leq 2 \E[y^l \mid A_{\vec{v}}]} \geq 0.75 q} \geq 1 - \frac{\delta}{\abs{\mc{H}}}
    \end{equation*}
    which implies:
    \begin{equation*}
        \P \blr{\frac{1}{2} \cdot \E [y^l \mid A_{\vec{v}}] \leq M_{\vec{v}} \leq 2 \E[y^l \mid A_{\vec{v}}]} \geq 1 - \frac{\delta}{\abs{\mc{H}}}.
    \end{equation*}
    A union bound over $\vec{v} \in \mc{H}$ now completes the proof of the claim.
\end{proof}

We now set $l = 2 \lceil k((1024 B \log k) / \Delta)^{32} \rceil$ and assume access to estimates, $M_{\vec{v}}$, satisfying the conclusion of Claim~\ref{clm:est_mom}. By dividing through by $(l - 1)!!$ and taking $l^{th}$ roots on both sides, we have from Claims~\ref{clm:yl_const_bnd_finite_sample} and \ref{clm:est_mom}:
\begin{equation*}
    \lr{\frac{\Psi (\vec{v})}{16 (l - 1)!!}}^{1 / l} \leq \underbrace{\lr{\frac{M_{\vec{v}}}{(l - 1)!!}}^{1 / l}}_{\tilde{\sigma}_{\vec{{v}}}} \leq \lr{\frac{2k \Psi (\vec{v})}{(l - 1)!!}}^{1 / l}.
\end{equation*}
We now define the set of directions which will be aligned with one of the $\vec{w}_i$ with high probability. Our candidate set of directions are defined below:
\begin{equation*}
    \mc{S} \coloneqq \blr{\vec{v} \in \mc{H}: \forall \vec{u} \in \mc{H} \text{ s.t } \norm{\vec{v} - \vec{u}} \leq \lr{\frac{\Delta}{128 M}}^2,\ \tilde{\sigma}_u \leq \tilde{\sigma}_{\vec{{v}}}}.
\end{equation*}
We now show that that the elements of $\mc{S}$ are clustered along the $\vec{w}_i$ with high probability and that each $\vec{w}_i$ has at least one element in $\mc{S}$ well aligned with it. Here, we introduce additional notation:
\begin{equation*}
    \forall i \in [k]: \wt{\vec{w}}_i \coloneqq \frac{\vec{w}_i}{\norm{\vec{w}_i}}
\end{equation*}
and state our claim below.

\begin{claim}
    \label{clm:s_set}
    Conditioned on the conclusion of Claim~\ref{clm:est_mom}, we have:
    \begin{align*}
        \forall \vec{v} \in \mc{S}, \exists j \in [k]: \abs*{\vec{v}^\top \wt{\vec{w}}_j} &\geq 1 - \lr{\frac{\Delta}{32 B}}^2 \\
        \forall j \in [k], \exists \vec{v} \in \mc{S}: \abs*{\vec{v}^\top \wt{\vec{w}}_j} &\geq 1 - \lr{\frac{\Delta}{32 B}}^2.
    \end{align*}
\end{claim}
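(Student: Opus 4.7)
My plan is to reduce the local-maximum condition defining $\mathcal{S}$ to a geometric condition on $A_{\vec{v}} := \max_{j \in [k]} (\vec{w}_j^\top \vec{v})^2 + 1$ via a two-sided sandwich for $\tilde{\sigma}_{\vec{v}}$, and then to exploit the well-separated local-maximum structure of $A$ on the unit sphere of $\hat{U}$. Writing $\hat{\vec{w}}_j := \proj{\hat{U}} \vec{w}_j / \|\proj{\hat{U}} \vec{w}_j\|$, I will show that every $\vec{v} \in \mathcal{S}$ is within $(\Delta/(64B))^2$ of some $\pm\hat{\vec{w}}_j$ on the sphere, and conversely, that near every $\hat{\vec{w}}_j$ the net contains a point of $\mathcal{S}$. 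The bound $\|\proj{\hat{U}} \vec{w}_j\|/\|\vec{w}_j\| \geq 1 - (\Delta/(1024B))^{32}$ (from the standing hypothesis on $\hat{U}$) then upgrades these to the claimed inner-product bounds with $\wt{\vec{w}}_j$.

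Combining Claims~\ref{clm:yl_const_bnd_finite_sample}, \ref{clm:yjl_discret_bnds}, and \ref{clm:est_mom}, and bounding the residual $\|\proj{V} \vec{w}_j\|^2 \rho^2 + \|\pproj{\hat{U}} \vec{w}_j\|^2$ appearing in Claim~\ref{clm:yjl_discret_bnds} by $\delta^2 := 2B^2(\Delta/(1024B))^{64}$, I obtain, for every $\vec{v} \in \mathcal{H}$,
\[
    16^{-1/l}\sqrt{A_{\vec{v}}} \;\leq\; \tilde{\sigma}_{\vec{v}} \;\leq\; (2k)^{1/l}\sqrt{A_{\vec{v}} + \delta^2}.
\]
Hence, if $\vec{v} \in \mathcal{S}$ and $\vec{u} \in \mathcal{H}$ satisfies $\|\vec{u}-\vec{v}\| \leq (\Delta/(128B))^2$, then $A_{\vec{u}} \leq (32k)^{2/l}(A_{\vec{v}} + \delta^2)$. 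The choice $l = 2\lceil k((1024 B \log k)/\Delta)^{32}\rceil$ makes $(32k)^{2/l}-1 = O((\Delta/(1024B))^{32}/k)$, so the admissible excess is $\eps_{\mathrm{slack}} := O(B^2(\Delta/(1024B))^{32})$.

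For the first inclusion, fix $\vec{v} \in \mathcal{S}$, let $j^* = \argmax_j(\vec{w}_j^\top\vec{v})^2$, and set $c := \hat{\vec{w}}_{j^*}^\top \vec{v}$. Suppose for contradiction that $|c| \leq 1 - (\Delta/(64B))^2$. I rotate $\vec{v}$ toward $\mathrm{sign}(c)\hat{\vec{w}}_{j^*}$ along the great circle inside $\hat{U}$ by angle $\theta = (\Delta/(128B))^2 - \gamma$ (with $\gamma = (\Delta/(1024B))^{32}$) and snap to the nearest net point, producing $\vec{u} \in \mathcal{H}$ with $\|\vec{u}-\vec{v}\| \leq (\Delta/(128B))^2$. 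Writing $s := \sqrt{1-c^2}$, a direct trigonometric computation gives
\[
    (\hat{\vec{w}}_{j^*}^\top \vec{u})^2 - c^2 \;=\; \sin^2\theta\,(1-2c^2) + cs\sin(2\theta) - O(\gamma) \;\geq\; \Omega(\theta^2),
\]
uniformly in the admissible range of $c$ (with a short case split covering $c \approx 0$, $c \approx 1/\sqrt{2}$, and $|c|$ near $1 - (\Delta/(64B))^2$). Since $\|\vec{w}_{j^*}\| \geq \Delta$ follows immediately from Assumption~\ref{as:no_index}, this yields $A_{\vec{u}} - A_{\vec{v}} \geq \Omega(\Delta^6/B^4)$, which exceeds $\eps_{\mathrm{slack}}$ by many orders of magnitude (using $B \geq \Delta$), contradicting $\vec{v}\in\mathcal{S}$. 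Hence $|c|\geq 1 - (\Delta/(64B))^2$, which upgrades to $|\vec{v}^\top \wt{\vec{w}}_{j^*}|\geq 1 - (\Delta/(32B))^2$ via the projection bound on $\hat{U}$.

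For the second inclusion, fix $j\in[k]$ and set $R := (\Delta/(64B))^2$. Assumption~\ref{as:no_index} (after absorbing the $O((\Delta/(1024B))^{32})$ projection corrections) makes $j$ the unique argmax of $(\vec{w}_i^\top\vec{v})^2$ throughout $B(\hat{\vec{w}}_j, R)$, since $|\vec{w}_i^\top\hat{\vec{w}}_j|\leq \|\vec{w}_j\| - \Delta$ up to those corrections and $R$ is too small to close the gap. Define $\vec{v}^{(j)} \in \argmax\{\tilde{\sigma}_{\vec{u}} : \vec{u}\in\mathcal{H},\ \|\vec{u}-\hat{\vec{w}}_j\|\leq R/4\}$; this is nonempty because $\gamma \ll R/4$. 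Applying the sandwich both at $\vec{v}^{(j)}$ and at a net point within $\gamma$ of $\hat{\vec{w}}_j$ gives $\tilde{\sigma}_{\vec{v}^{(j)}}^2\geq 16^{-2/l}(\|\vec{w}_j\|^2 + 1 - o(1))$, and running the rotation argument in reverse yields $|\hat{\vec{w}}_j^\top\vec{v}^{(j)}|\geq 1 - (\Delta/(64B))^2$, hence $|\vec{v}^{(j)\top}\wt{\vec{w}}_j|\geq 1 - (\Delta/(32B))^2$. To verify $\vec{v}^{(j)} \in \mathcal{S}$: any $\vec{u}\in\mathcal{H}$ with $\|\vec{u}-\vec{v}^{(j)}\|\leq (\Delta/(128B))^2 = R/4$ lies in $B(\hat{\vec{w}}_j, R/2)$; on $B(\hat{\vec{w}}_j, R/4)$ optimality of $\vec{v}^{(j)}$ gives $\tilde{\sigma}_{\vec{u}}\leq\tilde{\sigma}_{\vec{v}^{(j)}}$, and on the annulus $B(\hat{\vec{w}}_j, R/2) \setminus B(\hat{\vec{w}}_j, R/4)$ the sandwich combined with $A_{\vec{u}}\leq \|\vec{w}_j\|^2(1 - \Omega(R^2)) + 1$ again dominates $\eps_{\mathrm{slack}}$. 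The main difficulty throughout is the bookkeeping needed to check that the geometric gain $\Omega(\Delta^6/B^4)$ from a step of size $(\Delta/(128B))^2$ on the sphere dominates both the discretization error $O(B^2\gamma)$ and the multiplicative slack $\eps_{\mathrm{slack}}$; the exponents $32$ in $\gamma$ and in the definition of $l$ are calibrated precisely so that these error terms are polynomially smaller than the geometric gain.
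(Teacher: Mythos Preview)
Your proposal is correct and follows essentially the same strategy as the paper: both arguments combine Claims~\ref{clm:yl_const_bnd_finite_sample}, \ref{clm:yjl_discret_bnds}, and \ref{clm:est_mom} into a two-sided sandwich for $\tilde{\sigma}_{\vec v}$ in terms of $\max_j(\vec{w}_j^\top\vec{v})^2$, then prove Part~1 by a ``move $\vec{v}$ toward $\pm\vec{w}_{j^*}$'' contradiction and Part~2 by showing a local maximizer of $\tilde{\sigma}$ near each $\vec{w}_j$ lies in $\mathcal{S}$. The cosmetic differences---working with $\hat{\vec{w}}_j=\proj{\hat U}\vec{w}_j/\|\proj{\hat U}\vec{w}_j\|$ and great-circle rotation rather than $\wt{\vec{w}}_j$ and the additive perturbation $(\vec{v}\pm\epsilon\wt{\vec{w}}_{j^*})/\|\cdot\|$, and in Part~2 taking $\vec{v}^{(j)}=\arg\max\tilde\sigma$ over a ball instead of the net point closest to $\wt{\vec{w}}_j$ plus an annulus argument---do not change the logic; note that your ``rotation argument in reverse'' step is actually unnecessary, since $\|\vec{v}^{(j)}-\hat{\vec{w}}_j\|\le R/4$ already gives the required inner-product bound directly.
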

\begin{proof}
    We start with the first part of the claim. Assume the contrary and suppose $\vec{v} \in \mc{S}$ satisfy for all $j \in [k]$, $\abs{\vec{v}^\top \wt{\vec{w}}_j} < 1 - \lr{\frac{\Delta}{32 B}}^2$. Now, let $j^* \in [k]$ be such that $j^* = \argmax_{j \in [k]} \abs{\vec{v}^\top \vec{w}_j}$. We now break into two cases and consider the point $u \in \mc{H}$ such that $u = \argmin_{\vec{z} \in \mc{H}} \norm{\wt{\vec{v}} - \vec{z}}$ where 
    \begin{equation*}
        \wt{\vec{v}} = 
        \begin{cases}
            \frac{\vec{v} + \lr{\frac{\Delta}{256 B}}^2 \cdot \wt{\vec{w}}_{j^*}}{\norm{\vec{v} + \lr{\frac{\Delta}{256 B}}^2 \cdot \wt{\vec{w}}_{j^*}}} &\text{ if } \vec{w}_{j^*}^\top \vec{v} \geq 0 \\
            \frac{\vec{v} - \lr{\frac{\Delta}{256 B}}^2 \cdot \wt{\vec{w}}_{j^*}}{\norm{\vec{v} - \lr{\frac{\Delta}{256 B}}^2 \cdot \wt{\vec{w}}_{j^*}}} &\text{otherwise}
        \end{cases}.
    \end{equation*}
    We now have for the setting where $\vec{w}_{j^*}^\top \vec{v} \geq 0$:
    \begin{align*}
        \abs{\wt{\vec{w}}_{j^*}^\top \vec{u}} &\geq \abs{\wt{\vec{w}}^\top \wt{\vec{v}}} - \norm{\vec{u} - \wt{\vec{v}}} = \frac{\lr{\frac{\Delta}{256 B}}^2 + \abs{\wt{\vec{w}}_{j^*}^\top \vec{v}}}{\norm{\vec{v} + \lr{\frac{\Delta}{256 B}}^2 \cdot \wt{\vec{w}}_{j^*}}} - \norm{\vec{u} - \wt{\vec{v}}} \\
        &\geq \abs{\wt{\vec{w}}_{j^*}^\top \vec{v}} + \lr{\frac{\Delta}{256 B}}^2 \cdot \lr{\frac{1 - \abs{\wt{\vec{w}}_{j^*}^\top \vec{v}}}{2}} - \norm{\vec{u} - \wt{\vec{v}}} \geq \abs{\wt{\vec{w}}_{j^*}^\top \vec{v}} + \lr{\frac{\Delta}{256 B}}^4.
    \end{align*}
     and the alternative case is similar. Furthermore, we have for the case where $\wt{\vec{w}}_{j^*}^\top \vec{v} \geq 0$:
    \begin{equation*}
        \norm{\vec{u} - \vec{v}} \leq \norm{\vec{v} - \wt{\vec{v}}} + \norm{\vec{u} - \wt{\vec{v}}} \leq \abs*{1 - \frac{1}{\norm{\vec{v} + \lr{\frac{\Delta}{256 B}}^2 \cdot\wt{\vec{w}}_{j^*}}}} + \lr{\frac{\Delta}{256 B}}^2 + \norm{\vec{u} - \wt{\vec{v}}} \leq \lr{\frac{\Delta}{128 B}}^2
    \end{equation*}
    and similarly for the alternative case. For $u$, we have by applications of Claims~\ref{clm:yl_const_bnd_finite_sample}, \ref{clm:yjl_discret_bnds} and \ref{clm:est_mom} and our choice of $l$ that $\tilde{\sigma}_u > \tilde{\sigma}_{\vec{{v}}}$ yielding a contradiction. Hence, the first part of the claim follows.
    
    For the second part of the claim, let $j \in [k]$ and $\vec{v}^* = \argmin_{\vec{v} \in \mc{H}} \norm{\vec{v} - \wt{\vec{w}}_j}$. We have that:
    \begin{equation}
        \label{eq:vst_lb_wtj}
        (\vec{v}^*)^\top \wt{\vec{w}}_j \geq 1 - \lr{\frac{\Delta}{1024 B}}^8.
    \end{equation}
    Define $\mc{T} = \blr{\vec{v} \in \mc{H}: \lr{\frac{\Delta}{256 B}}^2 \leq \norm{\vec{v} - \vec{v}^*} \leq \lr{\frac{\Delta}{64B}}^2}$. We will establish that $\tilde{\sigma}_{\vec{v}^*} > \tilde{\sigma}_{\vec{{v}}}$ for all $\vec{v} \in \mc{T}$ establishing the second part of the claim as there exists $u \in \mc{S}$ with $\norm{\vec{u} - \vec{v}^*} \leq (\Delta / (256B))^2$. Suppose $\vec{v} \in \mc{T}$. We have as $\vec{v}^\top \vec{v}^* > 0$:
    \begin{equation}
        \label{eq:v_ub_wtj}
        \abs{\vec{v}^\top \wt{\vec{w}}_j} \leq \abs{\vec{v}^\top \vec{v}^*} + \abs{\vec{v}^\top (\wt{\vec{w}}_j - \vec{v}^*)} = \frac{2 - \norm{\vec{v} - \vec{v}^*}^2}{2} + \norm{\wt{\vec{w}}_j - \vec{v}^*} \leq 1 - \lr{\frac{\Delta}{512 B}}^4.
    \end{equation}
    Note that:
    \begin{equation*}
        \norm{\vec{v} - \wt{\vec{w}}_j} \leq \norm{\vec{v} - \vec{v}^*} + \norm{\vec{v}^* - \wt{\vec{w}}_j} \leq \lr{\frac{\Delta}{32B}}^2.
    \end{equation*}
    We have as a consequence:
    \begin{equation*}
        \vec{v}^\top \vec{w}_j \geq \norm{\vec{w}_j} \cdot \vec{v}^\top \wt{\vec{w}}_j \geq \norm{\vec{w}_j} \cdot \lr{\frac{2 - \norm{\vec{v} - \wt{\vec{w}}_j}^2}{2}} \geq \norm{\vec{w}_j} \cdot \lr{1 - \lr{\frac{\Delta}{32 B}}^4}.
    \end{equation*}
    For any alternative $i \in [k]$ with $i \neq j$, we have from Assumption~\ref{as:no_index} and the definition of $\wt{\vec{w}}_i$:
    \begin{align*}
        \abs{\vec{v}^\top \vec{w}_i} &\leq \norm{\vec{w}_i} \cdot \norm{\vec{v} - \wt{\vec{w}}_j} + \abs{\wt{\vec{w}}_j^\top \vec{w}_i} \leq \min \lr{\norm{\vec{w}_j} - \Delta, \frac{\norm{\vec{w}_i}^2}{\norm{\vec{w}_j}} - \frac{\norm{\vec{w}_i}}{\norm{\vec{w}_j}}\Delta} + \norm{\vec{w}_i} \cdot \lr{\frac{\Delta}{32 B}}^2 \\
        &\leq \min \lr{\norm{\vec{w}_j}, \frac{\norm{\vec{w}_i}^2}{\norm{\vec{w}_j}} - \frac{\norm{\vec{w}_i}}{\norm{\vec{w}_j}}\Delta} + \norm{\vec{w}_i} \cdot \lr{\frac{\Delta}{32 B}}^2 \leq \sqrt{\norm{\vec{w}_i}^2 - \norm{\vec{w}_i} \Delta} + \norm{\vec{w}_i} \cdot \lr{\frac{\Delta}{32B}}^2 \\
        &\leq \norm{\vec{w}_i} \cdot \lr{1 - \frac{1}{2} \cdot \frac{\Delta}{\norm{\vec{w}_i}}} + \norm{\vec{w}_i} \cdot \lr{\frac{\Delta}{32B}}^2 \leq \norm{\vec{w}_i} - \frac{\Delta}{4} \leq \norm{\vec{w}_i} \lr{1 - \frac{\Delta}{4B}} 
    \end{align*}
    which implies $j = \argmax_{i \in [k]} \abs{\vec{w}_j^\top \vec{v}}$ and we get again from this fact and Claims~\ref{clm:yl_const_bnd_finite_sample}, \ref{clm:yjl_discret_bnds} and \ref{clm:est_mom} along with Equations~\ref{eq:vst_lb_wtj} and \ref{eq:v_ub_wtj} that $\tilde{\sigma}_{\vec{v}^*} > \tilde{\sigma}_{\vec{{v}}}$ for our setting of $l$ concluding the proof of the claim.
\end{proof}

Finally, we use the set $\mc{S}$ constructed previously to construct estimates of the vectors $\{\vec{w}_j\}_{j \in [k]}$. For each $u \in \mc{H}$, let $\wt{M}_u$ be estimates of $\E [(y - \sqrt{\tilde{\sigma}_u^2 - 1} u^\top\vec{x})^l \mid A_{\vec{v}}]$ obtained via a union bound and Claim~\ref{clm:est_mom} by setting the failure probability to $\delta / \abs{\mc{H}}^4$ and by using an independent set of $m \geq C \cdot \frac{(2l)!! }{P_\rho} \cdot \lr{\frac{8B}{\Delta}}^{8l} \log (\abs{\mc{H}}^4 / \delta)$ samples from the self-selection model with unknown indices. From Claims~\ref{clm:yl_const_bnd_finite_sample}, \ref{clm:yjl_discret_bnds} and \ref{clm:est_mom}, $\wt{M}_u$ satisfy with probability at least $1 - \delta/2$ for all $u \in \mc{H}$:
\begin{multline}
    \label{eq:mtld_bnd}
    \lr{1 - \lr{\frac{\Delta}{256B}}^{4}} \sqrt{\max_{j \in [k]} (u^\top \vec{w}_j - \sqrt{\tilde{\sigma}_u^2 - 1})^2 + 1} \leq \lr{\frac{\wt{M}_u}{(l - 1)!!}}^{1 / l} \\
    \leq \lr{1 + \lr{\frac{\Delta}{256B}}^{4}} \sqrt{\max_{j \in [k]} (u^\top \vec{w}_j - \sqrt{\tilde{\sigma}_u^2 - 1})^2 + 1}.
\end{multline}
Our (potentially large) set of estimates $\mc{T}$ is now constructed as follows:
\begin{equation*}
    w \in \mc{T} \iff \exists \vec{v} \in \mc{S}: 
    \blr{
        \begin{gathered}
            w = \sqrt{\tilde{\sigma}_{\vec{{v}}}^2 - 1} \vec{v} \text{ and } \sqrt{\lr{\frac{\wt{M}_{\vec{{v}}}}{(l - 1)!!}}^{2 / l} - 1} \leq 2 \sqrt{\tilde{\sigma}_{\vec{{v}}}^2 - 1} - \frac{\Delta}{8} \\
            \text{ or } \\
            w = -\sqrt{\tilde{\sigma}_{\vec{{v}}}^2 - 1} \vec{v} \text{ and } \sqrt{\lr{\frac{\wt{M}_{\vec{{v}}}}{(l - 1)!!}}^{2 / l} - 1} \geq 2 \sqrt{\tilde{\sigma}_{\vec{{v}}}^2 - 1} - \frac{\Delta}{16}
        \end{gathered}
    }
\end{equation*}

\begin{claim}
    \label{clm:t_set}
    We have:
    \begin{align*}
        \forall w \in \mc{T}, \exists j \in [k]: \norm{\vec{w}_j - \vec{w}} &\leq \frac{\Delta}{8} \\
        \forall j \in [k], \exists w \in \mc{T}: \norm{\vec{w}_j - \vec{w}} &\leq \frac{\Delta}{8}
    \end{align*}
\end{claim}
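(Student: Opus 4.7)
The plan is to handle each $\vec{v} \in \mc{S}$ one at a time, using Claim~\ref{clm:s_set} to identify the unique (by separability) index $j^* \in [k]$ with $|\vec{v}^\top \wt{\vec{w}}_{j^*}| \ge 1 - (\Delta/(32B))^2$, and then to verify that the two threshold conditions in the definition of $\mc{T}$ select the correct sign $s := \mathrm{sign}(\vec{v}^\top \wt{\vec{w}}_{j^*})$ and yield an estimate within $\Delta/8$ of $\vec{w}_{j^*}$.

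First I would establish the basic estimates for a fixed $\vec{v} \in \mc{S}$. Near-alignment gives $\|\vec{v} - s\wt{\vec{w}}_{j^*}\| \le \sqrt{2}\cdot \Delta/(32B)$, and separability applied to $\wt{\vec{w}}_{j^*}$ and carried over to $\vec{v}$ by a triangle inequality yields $|\vec{w}_j^\top \vec{v}| \le \|\vec{w}_{j^*}\| - 15\Delta/16$ for every $j \ne j^*$; in particular $j^* = \argmax_j |\vec{w}_j^\top \vec{v}|$. Combining Claims~\ref{clm:yl_const_bnd_finite_sample}, \ref{clm:yjl_discret_bnds}, and \ref{clm:est_mom} with the choice of $l$ then shows that $\sigma_{\vec{v}} := \sqrt{\tilde{\sigma}_{\vec{v}}^2 - 1}$ approximates $|\vec{v}^\top \vec{w}_{j^*}|$ to within $(\Delta/(256B))^4$, whence $|\sigma_{\vec{v}} - \|\vec{w}_{j^*}\|| \le \Delta/64$.

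Next I would carry out the sign detection. The two candidate inclusions in $\mc{T}$ are tested against $\sqrt{(\wt{M}_{\vec{v}}/(l-1)!!)^{2/l} - 1}$, which by~\eqref{eq:mtld_bnd} equals $\max_j |\vec{w}_j^\top \vec{v} - s'\sigma_{\vec{v}}|$ up to a multiplicative $(1\pm(\Delta/(256B))^4)$ factor, where $s' \in \{+1,-1\}$ is the tested sign. In the aligned case $s' = s$, the $j^*$ contribution is at most $2\Delta/64$, while for $j \ne j^*$ the bound from Step~1 gives $|\vec{w}_j^\top \vec{v} - s\sigma_{\vec{v}}| \le \sigma_{\vec{v}} + |\vec{w}_j^\top \vec{v}| \le 2\sigma_{\vec{v}} - \Delta/2$; hence the tested quantity lies below $2\sigma_{\vec{v}} - \Delta/8$, triggering the aligned rule. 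In the anti-aligned case $s' = -s$, the $j^*$ contribution alone is at least $2\sigma_{\vec{v}} - \Delta/32 \ge 2\sigma_{\vec{v}} - \Delta/16$, triggering the anti-aligned rule but not the aligned one (the two thresholds are separated by $\Delta/16$, which exceeds the combined multiplicative error). So exactly the correct copy $s\sigma_{\vec{v}} \vec{v}$ is added to $\mc{T}$ from each $\vec{v}\in\mc{S}$. The distance bound then follows from
\[
  \|s\sigma_{\vec{v}}\vec{v} - \vec{w}_{j^*}\|
  \le \sigma_{\vec{v}} \|\vec{v} - s\wt{\vec{w}}_{j^*}\|
  + \bigl|\sigma_{\vec{v}} - \|\vec{w}_{j^*}\|\bigr|
  \le B\cdot \frac{\sqrt{2}\,\Delta}{32 B} + \frac{\Delta}{64}
  \le \frac{\Delta}{8},
\]
which gives the first containment direction. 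For the second direction, Claim~\ref{clm:s_set} supplies for every $j\in[k]$ some $\vec{v}_j \in \mc{S}$ aligned with $\pm\wt{\vec{w}}_j$, and re-running the above analysis with $\vec{v}=\vec{v}_j$ and $j^*=j$ produces the required $w\in\mc{T}$ within $\Delta/8$ of $\vec{w}_j$.

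The main obstacle is propagating the $(1\pm(\Delta/(256B))^4)$ multiplicative error from~\eqref{eq:mtld_bnd} together with the additive error in $\sigma_{\vec{v}}$ through the $l$-th-root and square-root operations without erasing the $\Delta/16$ gap between the two thresholds. The large exponent $l = 2\lceil k((1024 B \log k)/\Delta)^{32}\rceil$ chosen in the setup is calibrated precisely so that these errors fit comfortably inside that margin, making the sign decision unambiguous and the case split clean.
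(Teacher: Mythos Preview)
Your proposal is correct and follows essentially the same route as the paper: identify the aligned index $j^*$ via Claim~\ref{clm:s_set}, use Claims~\ref{clm:yl_const_bnd_finite_sample}--\ref{clm:est_mom} to pin down $\sigma_{\vec v}\approx|\vec w_{j^*}^\top\vec v|$, do the same sign-detection case split on $\mathrm{sign}(\vec w_{j^*}^\top\vec v)$ using the shifted moment $\wt M_{\vec v}$, and finish with the orthogonal-plus-radial triangle inequality for the distance bound. One small point of presentation: there is only a single test statistic $\wt M_{\vec v}$ (always built from the $+\sigma_{\vec v}$ shift), so your parameter $s'$ is fixed at $+1$ throughout rather than ranging over $\{\pm1\}$; your ``aligned/anti-aligned'' dichotomy is really just the two cases for the true sign $s$, exactly as in the paper.
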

\begin{proof}
    For the first claim, let $\vec{v} \in \mc{S}$ and $j^* = \argmax_{j \in [k]} \abs*{\vec{w}_j^\top \vec{v}}$. From Claim~\ref{clm:s_set}, we have:
    \begin{equation*}
        \abs{\wt{\vec{w}}_{j^*}^\top \vec{v}} \geq 1 - \lr{\frac{\Delta}{32 B}}^2.
    \end{equation*}
    Furthermore, we have from Claims~\ref{clm:yl_const_bnd_finite_sample}, \ref{clm:yjl_discret_bnds} and \ref{clm:est_mom} that:
    \begin{equation*}
        \lr{1 - \lr{\frac{\Delta}{256B}}^4} \sqrt{(\vec{w}_{j^*}^\top \vec{v})^2 + 1} \leq \tilde{\sigma}_{\vec{v}} \leq \lr{1 + \lr{\frac{\Delta}{256B}}^4} \sqrt{(\vec{w}_{j^*}^\top \vec{v})^2 + 1}.
    \end{equation*}
    By squaring, subtracting $1$ from both sides and noting that $\abs{\wt{\vec{w}}_{j^*}^\top \vec{v}} \geq \Delta / 2$, we obtain:
    \begin{equation}
        \label{eq:sig_v_bnd}
        \lr{1 - \lr{\frac{\Delta}{32 B}}^2} \abs{\vec{w}_{j^*}^\top \vec{v}} \leq \sqrt{\tilde{\sigma}_{\vec{{v}}}^2 - 1} \leq \lr{1 + \lr{\frac{\Delta}{32 B}}^2} \abs{\vec{w}_{j^*}^\top \vec{v}}.
    \end{equation}
    
    As in the proof of Theorem~\ref{thm:identifiability_no_index}, all that remains is to distinguish the two cases: the first when $\vec{w}_{j^*}^\top \vec{v} > 0$ and the second when $\vec{w}_{j^*}^\top \vec{v} < 0$. For the first, we have from Equation~\ref{eq:sig_v_bnd}:
    \begin{equation*}
        -\lr{\frac{\Delta}{32B}}^2 \norm{\vec{w}_{j^*}} \leq \vec{v}^\top (\vec{w}_{j^*} - \sqrt{\tilde{\sigma}_{\vec{{v}}}^2 - 1} \vec{v}) \leq \lr{\frac{\Delta}{32B}}^2 \norm{\vec{w}_{j^*}}
    \end{equation*}
    Furthermore, we have for all $j \in [k]$ with $j \neq j^*$:
    \begin{align*}
        \abs*{\vec{v}^\top (\vec{w}_j - \sqrt{\tilde{\sigma}_{\vec{{v}}}^2 - 1} \vec{v})} &\leq \abs*{\vec{v}^\top \vec{w}_j} + \sqrt{\tilde{\sigma}_{\vec{{v}}}^2 - 1} = \abs*{((\vec{v} - (\vec{v}^\top \wt{\vec{w}}_{j^*}) \wt{\vec{w}}_{j^*}) + (\vec{v}^\top \wt{\vec{w}}_{j^*}) \wt{\vec{w}}_{j^*})^\top \vec{w}_j} + \sqrt{\tilde{\sigma}_{\vec{{v}}}^2 - 1} \\
        &\leq \abs*{(\vec{v} - (\vec{v}^\top \wt{\vec{w}}_{j^*}) \wt{\vec{w}}_{j^*})^\top \vec{w}_j} + \abs*{(\vec{v}^\top \wt{\vec{w}}_{j^*}) \wt{\vec{w}}_{j^*}^\top \vec{w}_j} + \sqrt{\tilde{\sigma}_{\vec{{v}}}^2 - 1} \\
        &\leq \norm{\vec{w}_j} \norm{\vec{v} - (\vec{v}^\top \wt{\vec{w}}_{j^*})\wt{\vec{w}}_{j^*}} + \abs*{\wt{\vec{w}}_{j^*}^\top \vec{w}_j} + \sqrt{\tilde{\sigma}_{\vec{{v}}}^2 - 1} \\
        &\leq B \cdot \frac{\Delta}{16B} + \norm{\vec{w}_{j^*}} - \Delta + \sqrt{\tilde{\sigma}_{\vec{{v}}}^2 - 1} \leq 2 \norm{\vec{w}_{j^*}} - \frac{\Delta}{2} \leq 2 \sqrt{\tilde{\sigma}_{\vec{{v}}}^2 - 1} - \frac{\Delta}{4}.
    \end{align*}
    Hence, in the first case we have from Equation~\ref{eq:mtld_bnd}:
    \begin{equation*}
        \sqrt{\lr{\frac{\wt{M}_{\vec{{v}}}}{(l - 1)!!}}^{2 / l} - 1} \leq 2 \sqrt{\tilde{\sigma}_{\vec{{v}}}^2 - 1} - \frac{\Delta}{8}.
    \end{equation*}
    In the alternative case where $\vec{v}^\top \vec{w}_{j^*} < 0$, we have:
    \begin{align*}
        \abs*{\vec{v}^\top (\vec{w}_{j^*} - \sqrt{\tilde{\sigma}_{\vec{{v}}}^2 - 1} \vec{v})} &\geq -\vec{v}^\top (\vec{w}_{j^*} - \sqrt{\tilde{\sigma}_{\vec{{v}}}^2 - 1} \vec{v}) \\
        &\geq \lr{1 - \lr{\frac{\Delta}{32B}}^2} \norm{\vec{w}_{j^*}} + \sqrt{\tilde{\sigma}_{\vec{{v}}}^2 - 1} \geq 2 \sqrt{\tilde{\sigma}_{\vec{{v}}}^2 - 1} - \frac{\Delta}{32}.
    \end{align*}
    Consequently, we have in this case again from Equation~\ref{eq:mtld_bnd}:
    \begin{equation*}
        \sqrt{\lr{\frac{\wt{M}_{\vec{{v}}}}{(l - 1)!!}}^{2 / l} - 1} \geq 2 \sqrt{\tilde{\sigma}_{\vec{{v}}}^2 - 1} - \frac{\Delta}{16}.
    \end{equation*}
    
    We now show that any $\vec{w} \in \mc{T}$ can be uniquely associated with a $\vec{w}_j$. Let $\vec{w} \in \mc{T}$. Suppose $\vec{w} = \sqrt{\tilde{\sigma}_{\vec{{v}}}^2 - 1} \vec{v}$ for some $\vec{v} \in \mc{S}$; that is, the first case occurs and $\vec{v}^\top \vec{w}_{j^*} > 0$. We now have:
    \begin{align*}
        \norm{\vec{w} - \vec{w}_{j^*}} &\leq \norm{\vec{w}_{j^*} - (\vec{w}_{j^*}^\top \vec{v}) \vec{v}} + \norm{(\vec{w}_{j^*}^\top \vec{v}) \vec{v} - \sqrt{\tilde{\sigma}_{\vec{{v}}}^2 - 1} \cdot \vec{v}} \\
        &\leq \norm{\vec{w}_{j^*}} \cdot \norm{\pproj{\vec{v}} \wt{\vec{w}}_{j^*}}+ \lr{\frac{\Delta}{32B}}^2 \leq B \cdot \frac{\Delta}{16B} + \lr{\frac{\Delta}{32B}}^2 \leq \frac{\Delta}{8}.
    \end{align*}
    The alternative case is similar. This establishes the first claim of the lemma.
    
    For the second part of the claim, let $j \in [k]$ and $\vec{v} \in \mc{S}$ satisfy (Claim~\ref{clm:s_set}):
    \begin{equation*}
        \abs{\vec{v}^\top \wt{\vec{w}}_j} \geq 1 - \lr{\frac{\Delta}{32B}}^2.
    \end{equation*}
    For all $i \neq j$, we have as in the proof of Claim~\ref{clm:s_set}, from Assumption~\ref{as:no_index} and the definition of $\wt{\vec{w}}_i$:
    \begin{align*}
        \abs{\vec{v}^\top \vec{w}_i} &\leq \norm{\vec{w}_i} \cdot \norm{\vec{v} - \wt{\vec{w}}_j} + \abs{\wt{\vec{w}}_j^\top \vec{w}_i} \leq \min \lr{\norm{\vec{w}_j} - \Delta, \frac{\norm{\vec{w}_i}^2}{\norm{\vec{w}_j}} - \frac{\norm{\vec{w}_i}}{\norm{\vec{w}_j}}\Delta} + \norm{\vec{w}_i} \cdot \lr{\frac{\Delta}{32 B}}^2 \\
        &\leq \min \lr{\norm{\vec{w}_j}, \frac{\norm{\vec{w}_i}^2}{\norm{\vec{w}_j}} - \frac{\norm{\vec{w}_i}}{\norm{\vec{w}_j}}\Delta} + \norm{\vec{w}_i} \cdot \lr{\frac{\Delta}{32 B}}^2 \leq \sqrt{\norm{\vec{w}_i}^2 - \norm{\vec{w}_i} \Delta} + \norm{\vec{w}_i} \cdot \lr{\frac{\Delta}{32B}}^2 \\
        &\leq \norm{\vec{w}_i} \cdot \lr{1 - \frac{1}{2} \cdot \frac{\Delta}{\norm{\vec{w}_i}}} + \norm{\vec{w}_i} \cdot \lr{\frac{\Delta}{32B}}^2 \leq \norm{\vec{w}_i} - \frac{\Delta}{4} \leq \norm{\vec{w}_i} \lr{1 - \frac{\Delta}{4B}} 
    \end{align*}
    and hence $j = \argmax_{i \in [k]} \abs{\vec{v}^\top \wt{\vec{w}}_i}$. The second part now follows from the first part as the corresponding element, $w \in \mc{T}$ satisfies $\norm{\vec{w} - \vec{w}_j} \leq \Delta / 8$ finishing the proof of the claim.
\end{proof}

Finally, we prune the set $\mc{T}$ to remove duplicate vectors for the same $\vec{w}_j$. Note that all $\vec{w}, \vec{u} \in \mc{T}$ with $\norm{\vec{w} - \vec{w}_j}, \norm{\vec{u} - \vec{w}_j} \leq \Delta / 8$ satisfy 
\begin{equation*}
    \norm{\vec{w} - \vec{u}} \leq \norm{\vec{w} - \vec{w}_j} + \norm{\vec{u} - \vec{w}_j} \leq \frac{\Delta}{4}.
\end{equation*}
    by the triangle inequality. Furthermore, all $\vec{w}, \vec{u} \in \mc{T}$ with $\norm{\vec{w} - \vec{w}_j} \leq \Delta / 8$ and $\norm{\vec{u} - \vec{w}_i} \leq \Delta / 8$ for $i \neq j$ satisfy:
\begin{equation*}
    \norm{\vec{w} - \vec{u}} \geq \norm{\vec{w}_i - \vec{w}_j} - \norm{\vec{w} - \vec{w}_j} - \norm{\vec{u} - \vec{w}_j} \geq 3\Delta / 4
\end{equation*}
by the triangle inequality and Assumption~\ref{as:no_index}. Therefore, from Claim~\ref{clm:s_set}, a simple de-duplication step is to cluster all points within a radius of $\Delta / 2$ of each other and picking one representative from each of them. This concludes the proof of Theorem~\ref{thm:no_index}.

\qed

\subsection{Estimation in the $k = 2$ case}
\label{ssec:two_models}
We will now demonstrate how, when $k = 2$, we can use a moment-based
algorithm to estimate $\{\vec{w_1}, \vec{w_2}\}$ in $\text{poly}(1/\epsilon$)
time. In particular, we provide the algorithm corresponding to Theorem \ref{thm:no_index:2}, 
restated below:
\unknownindtwo*
\noindent The algorithm will operate as follows: 
\begin{enumerate}
    \item Using the procedure outlined in Section \ref{ssec:identifying_subspace}, we find an
    approximation $U$ to the linear subspace $U^*$ containing
    $\text{span}(\vec{w_1}, \vec{w_2})$. 
    \item Set up an $(\eps/6)$-covering over $U \cap \mathcal{B}(B)$, where $\mathcal{B}(B)$
    is the $\ell_2$ ball with radius $B$.
    Since $\|\vec{w_i}\| \leq B$ both vectors are contained in the covering, 
    and the covering is of size $O(B^2/\varepsilon^2)$.
    \item For each element $\widehat{\vec{w}}$ of the
    covering, we collect samples $(\vec{x}, y - \vec{x}^\top \widehat{\vec{w}})$ where $(x, y)$ are from the
    no-index self selection model.
    \item Using the moments of $y - \vec{x}^\top \widehat{\vec{w}}$, we estimate
    $\min_{i \in \{0, 1\}} \|\vec{w_i} - \widehat{\vec{w}}\|^2$.
    \item We will show that
    $O(\delta^{-1}\eps^{-2}\text{poly}(1/B))$ samples suffice to get an
    $\eps$-close approximation to this quantity with probability $1-\delta$.
    Setting 
    $\delta = O(\rho \eps^2)$ for some $\rho < 1$ ensures that we get
    accurate estimates of this quantity for each element $\vec{w}$ of our covering.
    \item As long as $\vec{w_1}$ and $\vec{w_2}$ are sufficiently separated,
    we can estimate $\vec{w_1}$ to be the minimum of our estimate over the
    $\eps$-covering, i.e., $\widehat{\vec{w_1}} = \arg\min_{w} \min_{i \in \{0, 1\}}
    \|\vec{w_i} - \vec{w}\|$. We can then estimate $\vec{w_2}$ to be the
    minimizer of the estimate over points that are far enough from
    $\widehat{\vec{w_1}}$. 
\end{enumerate}

Turning this outline into an efficient algorithm entails tackling a few distinct
technical challenges. First, we will show how to estimate $\min_{i \in \{0, 1\}}
\|\vec{w_i} - \vec{v}\|$ using samples $(\vec{x}, y)$ from our data-generating
process. Then, we will show that our sequential approach to estimating
$\vec{w_1}$ and $\vec{w_2}$ indeed suffices to recover both with good enough
accuracy. Finally, we will show that the error incurred by the subspace-finding
step does not adversely affect our estimation.

\paragraph{Subspace recovery.}
From Section \ref{ssec:identifying_subspace}, we may assume the existence of a 
$2$-dimensional subspace ${U}$ satisfying the following:
\begin{equation*}
    \forall i \in [2]: \frac{\norm{w_i - \mathcal{P}_{{U}} w_i}}{\norm{w_i}} \leq \eps/6,
\end{equation*}
where to find ${U}$ we need $\poly(B, d, 1/\eps, 1/\alpha, 1/\Delta)$ sample and time 
complexity. Thus, in the remainder of this section, 
we will operate over the (at most two-dimensional)
subspace ${U}$.

\paragraph{Finding the nearest weight vector.}
When $k = 2$, direct integration allows us to compute the moment generating
function of $y = \max \{\vec{w_1}^\top \vec{x} + \eta_1, \vec{w_2}^\top
\vec{x} + \eta_2\}$ in terms of the covariance matrix between $y_1$ and
$y_2$. This in turn allows us to accurately estimate the lesser of
$\text{Var}[y_1]$ and $\text{Var}[y_2]$, as captured by the following Lemma:
\begin{lemma} \label{lem:minVariance}
    Given a two-dimensional Gaussian random variable $\vec{z} \sim \mathcal{N}(0,
    \Sigma)$ with $0 \prec \Sigma \in \mathbb{R}^2$, there exists an
    algorithm $\textsc{MinVariance}(\delta, \epsilon)$ which given
    $O(\delta^{-1}\eps^{-4})$ samples of $\max(\vec{z})$, outputs an estimate
    $\widehat{\sigma}$ of $\min_{0, 1} \Sigma_{ii}$ satisfying
    \[
        \min(\Sigma_{1,1},\Sigma_{2,2}) - \eps 
        \leq \widehat{\sigma} 
        \leq \min(\Sigma_{1,1},\Sigma_{2,2}) + \eps
    \]
    with probability $1 - \delta$.
\end{lemma}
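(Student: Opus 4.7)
The plan is to apply the method of moments to the first three moments of $Y := \max(z_1, z_2)$. Using the decomposition $Y = \tfrac{z_1 + z_2}{2} + \tfrac{|z_1 - z_2|}{2}$, I introduce $S := z_1 + z_2$ and $A := z_1 - z_2$, which are jointly centered Gaussian with $\Var(S) = \Sigma_{11} + \Sigma_{22} + 2\Sigma_{12}$, $\Var(A) = \Sigma_{11} + \Sigma_{22} - 2\Sigma_{12}$, and $\Cov(S, A) = \Sigma_{11} - \Sigma_{22}$; note that positive-definiteness of $\Sigma$ guarantees $\Var(A) > 0$. I will derive closed-form expressions for $\E[Y], \E[Y^2], \E[Y^3]$, solve the resulting system for $(\Sigma_{11}+\Sigma_{22})$, $\Sigma_{12}$, and $(\Sigma_{11}-\Sigma_{22})^2$, and output $\min(\Sigma_{11}, \Sigma_{22}) = \tfrac{1}{2}\bigl((\Sigma_{11} + \Sigma_{22}) - |\Sigma_{11} - \Sigma_{22}|\bigr)$ by taking a square root.

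For the first moment, $\E[Y] = \E[|A|]/2 = \sqrt{\Var(A)/(2\pi)}$, which identifies $\Var(A)$. For the second, centeredness gives $Y \stackrel{d}{=} -\min(z_1,z_2)$, and combined with the identity $Y^2 + \min(z_1,z_2)^2 = z_1^2 + z_2^2$ yields $\E[Y^2] = (\Sigma_{11}+\Sigma_{22})/2$; together with the first, this determines both $\Sigma_{11}+\Sigma_{22}$ and $\Sigma_{12}$. For $\E[Y^3]$, expanding the cube and observing that $\E[S^3] = 0$ and $\E[SA^2] = 0$ (as odd Gaussian moments), I get $8\E[Y^3] = 3\E[S^2|A|] + \E[|A|^3]$. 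Writing $S = \beta A + T$ with $\beta = \Cov(S,A)/\Var(A)$ and $T \perp A$, a short calculation gives $\E[S^2|A|] = \beta^2 \E[|A|^3] + \Var(T)\E[|A|]$. Since $\Var(T)$ and the $\E[|A|^r]$ are already known from the previous two moments, this is a linear equation in $\beta^2$, determining $(\Sigma_{11}-\Sigma_{22})^2 = \beta^2 \Var(A)^2$.

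Finally, to turn identifiability into the stated sample complexity, I would replace each $\mu_k := \E[Y^k]$ by the empirical average $\hat\mu_k := n^{-1} \sum_i m_i^k$. Since $Y$ is subgaussian with parameter controlled by $\|\Sigma\|$, each $\Var(Y^k) \le \E[Y^{2k}]$ is a bounded function of $\Sigma$, and Chebyshev's inequality with a union bound over $k \in \{1,2,3\}$ yields $|\hat\mu_k - \mu_k| \le O(1/\sqrt{n\delta})$ with probability $1-\delta$. The main technical obstacle, and the reason for the $\eps^{-4}$ rate rather than the naive $\eps^{-2}$, is the degenerate case $\Sigma_{11} = \Sigma_{22}$: the map $q^2 \mapsto |q|$ has unbounded derivative at $0$, so recovering $|\Sigma_{11}-\Sigma_{22}|$ to accuracy $\eps$ requires estimating its square to accuracy $\eps^2$ (via the worst-case bound $|\sqrt{x}-\sqrt{y}| \le \sqrt{|x-y|}$). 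Since the estimate of $(\Sigma_{11}-\Sigma_{22})^2$ produced by the inversion above is a Lipschitz function of $(\hat\mu_1,\hat\mu_2,\hat\mu_3)$ on the admissible parameter range, propagating errors backwards requires $|\hat\mu_k - \mu_k| \lesssim \eps^2$, and hence $n = O(\delta^{-1}\eps^{-4})$ as claimed.
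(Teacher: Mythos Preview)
Your approach is correct in substance but differs from the paper's in an instructive way. The paper uses only the \emph{even} moments $\E[X^2]=\tfrac12(\sigma_1^2+\sigma_2^2)$ and $\E[X^4]=\tfrac32(\sigma_1^4+\sigma_2^4)$, derived from the Nadarajah MGF; the key observation there is that both are \emph{independent of the correlation} $\rho$, so one solves directly for the pair $\{\sigma_1^2,\sigma_2^2\}$ via their sum and product and never needs to recover $\Sigma_{12}$. Your route through moments $1,2,3$ does involve $\rho$ (via $\Var(A)$) but recovers it along the way, which is equally valid. One minor slip: you assert that $\Var(T)$ is ``already known from the previous two moments,'' but in fact $\Var(T)=\Var(S)-\beta^2\Var(A)$ still contains the unknown $\beta^2$. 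This is harmless because substituting it back keeps the equation linear in $\beta^2$ (using $\E[|A|^3]=2\Var(A)\E[|A|]$, the coefficient of $\beta^2$ becomes $3\Var(A)\E[|A|]>0$), so your conclusion stands. The tradeoff between the two arguments is that the paper's $\rho$-free formulas give Lipschitz constants depending only on $\sigma_{\max}$, whereas your inversion divides by $\Var(A)^{3/2}$ and hence the hidden constant in $O(\delta^{-1}\eps^{-4})$ also depends on a lower bound for $\Var(A)$; in the intended application (Corollary~\ref{cor:minVariance}) one has $\Var(A)=\|\vec{w}_1-\vec{w}_2\|^2+2\ge 2$, so this is benign, but it is worth stating explicitly. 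Both proofs share the same source of the $\eps^{-4}$ rate: the final square root $q\mapsto\sqrt{q}$ near $q=0$ when $\Sigma_{11}=\Sigma_{22}$.
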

\begin{proof}
    We make use of the following closed form for the moment generating function of
the maximum of two Gaussians $X_1$, $X_2$ as given by \citet{nadarajah2008exact}:
\begin{align*}
    m(t) &= \exp\lr{t\mu_1 + \frac{t^2 \sigma_1^2}{2}}\cdot \Phi\lr{\frac{\mu_1 - \mu_2 + t(\sigma_1^2 - \rho \sigma_1 \sigma_2)}{\sqrt{\sigma_1^2 + \sigma_2^2 - 2\rho \sigma_1 \sigma_2}}} \\
    &\qquad + \exp\lr{t\mu_2 + \frac{t^2 \sigma_2^2}{2}}\cdot \Phi\lr{\frac{\mu_2 - \mu_1 + t(\sigma_2^2 - \rho \sigma_1 \sigma_2)}{\sqrt{\sigma_1^2 + \sigma_2^2 - 2\rho \sigma_1 \sigma_2}}},
\end{align*}
where $\mu_i = \mathbb{E}[X_i]$, $\sigma_i^2 = \mathbb{E}[(X_i - \mu_i)^2]$, and
$\rho = \mathbb{E}[X_1 X_2] - \mu_1 \mu_2$. In our case, $\mu_1 = \mu_2 = 0$, and so
\begin{align*}
    m(t) &= \exp\lr{\frac{t^2 \sigma_1^2}{2}}\cdot \Phi\lr{\frac{t(\sigma_1^2 - \rho \sigma_1 \sigma_2)}{\sqrt{\sigma_1^2 + \sigma_2^2 - 2\rho \sigma_1 \sigma_2}}} 
    + \exp\lr{\frac{t^2 \sigma_2^2}{2}}\cdot \Phi\lr{\frac{t(\sigma_2^2 - \rho \sigma_1 \sigma_2)}{\sqrt{\sigma_1^2 + \sigma_2^2 - 2\rho \sigma_1 \sigma_2}}}.
\end{align*}
Let $X = \max\{X_1, X_2\}$ be the observed variable. Manual differentiation
of the MGF yields:
$\mathbb{E}[X^2] = \frac{1}{2}(\sigma_1^2 + \sigma_2^2)$ and 
$\mathbb{E}[X^4] = \frac{3}{2}(\sigma_1^4 + \sigma_2^4)$; in particular, note
that the moments are independent of $\rho$ even if $\rho > 0$.
Now, re-parameterizing in terms of $a = \sigma_1^2 + \sigma_2^2$ and $b =
\sigma_1 \sigma_2$:
\begin{align*}
    2\mathbb{E}[X^2] = a, \qquad
    \frac{2}{3}\mathbb{E}[X^4] = a^2 - 2b^2 \implies
    a = 2\cdot \mathbb{E}[X^2], \qquad
    b = \sqrt{2\cdot \mathbb{E}[X^2]^2 - \frac{1}{3}\mathbb{E}[X^4]} 
\end{align*}
Note that the solution to the system is unique because
$a, b > 0$ (since $\sigma_1 > 0$ and $\sigma_2 > 0$).
We can now solve for both $\sigma_1$ and $\sigma_2$,
since we have $\sigma_1^2 + \sigma_2^2 = a$ and $\sigma_1^2\sigma_2^2 =
b^2$, and two real numbers are uniquely determined by their sum and product:
\begin{align*}
    \sigma_1^2 & = \mathbb{E}[X^2] - \sqrt{\frac{1}{3}\mathbb{E}[X^4] - \mathbb{E}[X^2]^2} 
    \qquad \qquad \\
    \sigma_2^2 & =  \mathbb{E}[X^2] + \sqrt{\frac{1}{3}\mathbb{E}[X^4] - \mathbb{E}[X^2]^2} 
\end{align*}

Thus, estimating the minimum (and maximum) variance in the mixture amounts to
estimating the second and fourth moments of $X = \max \{X_1, X_2\}$.
Note that $\text{Var}[X^4 - \mathbb{E}[X^4]] \leq \mathbb{E}[X^8] =
\frac{105}{2}(\sigma_1^8 + \sigma_2^8),$ where the latter is attained via
direct calculation from the MGF. Applying Chebyshev's inequality and letting
$X^{(1)},\ldots, X^{(n)}$ be i.i.d. samples of $X$,
\[
    \mathbb{P}\lr{\left|\frac{1}{n}\sum_{i=1}^n \lr{X^{(i)}}^4 - \mathbb{E}[X^4]\right| \geq \eps} < \frac{105 (\sigma_1^8 + \sigma_2^8)}{2 n\eps^2}.
\]
The same argument applies for the second moment. Thus, taking $n =
O\lr{\delta^{-1} \sigma_{max}^{8} \eps^{-4}}$ and propagating errors through the equation
for $\sigma_1^2$ concludes the proof.

\end{proof}
\noindent As a corollary, we can estimate $\min_{i \in \{0, 1\}}
\|\vec{w_i} - \vec{w}\|^2$ to $\eps$-precision with probability at least $1
- \delta$:
\begin{corollary}
    \label{cor:minVariance}
    Suppose we have samples $\{(\vec{x}, y)\}$ generated from the self-selection model
    with unobserved index. Suppose further that $\|\vec{w_i}\| \leq B$ for $i
    \in  \{0, 1\}$. Then, we can use the \textsc{MinVariance} algorithm of Lemma 
    \ref{lem:minVariance} to recover $\sigma$ such that with probability at least
    $1 - \delta$, 
    \[
        \left|\sigma - \min_{i \in \{0, 1\}} \|\vec{w_i} - \vec{w}\| \right| \leq \eps/6,
    \]
    using $n \in O(\delta^{-1}B^8\eps^{-8})$ samples.
\end{corollary}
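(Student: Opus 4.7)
The plan is to reduce the corollary to a direct application of Lemma~\ref{lem:minVariance}. Given a test vector $\vec{w}$ and a sample $(\vec{x}, y)$ from the self-selection model, I transform it to $(\vec{x}, y')$ where $y' \triangleq y - \vec{x}^\top \vec{w}$. Since $y = \max(y_1, y_2)$ with $y_i = \vec{w}_i^\top \vec{x} + \eta_i$, subtracting $\vec{x}^\top \vec{w}$ from both gives
\[
    y' = \max\bigl( (\vec{w}_1 - \vec{w})^\top \vec{x} + \eta_1,\, (\vec{w}_2 - \vec{w})^\top \vec{x} + \eta_2 \bigr) = \max(X_1, X_2),
\]
where, because $\vec{x} \sim \mathcal{N}(0, \bm{I}_d)$ and $\eta_i \sim \mathcal{N}(0,1)$ independently, the vector $(X_1, X_2)$ is a centered bivariate Gaussian with diagonal entries $\Sigma_{ii} = \|\vec{w}_i - \vec{w}\|^2 + 1$ (and off-diagonal $(\vec{w}_1 - \vec{w})^\top (\vec{w}_2 - \vec{w})$, which is irrelevant here). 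Thus each $y'$ is distributed exactly as $\max(\vec{z})$ for a bivariate Gaussian $\vec{z} \sim \mathcal{N}(0, \Sigma)$.

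Next, I run $\textsc{MinVariance}(\delta, \eps^2/36)$ from Lemma~\ref{lem:minVariance} on the transformed samples. With probability at least $1 - \delta$ this outputs $\hat{\tau}$ satisfying
\[
    \bigl|\hat{\tau} - \min_{i \in \{1,2\}}(\|\vec{w}_i - \vec{w}\|^2 + 1)\bigr| \le \eps^2/36.
\]
The sample complexity from Lemma~\ref{lem:minVariance} is $O(\delta^{-1} \sigma_{\max}^8 (\eps^2/36)^{-4})$, and since $\|\vec{w}_i\| \le B$ and $\|\vec{w}\| \le B$ we have $\sigma_{\max}^2 \le (2B)^2 + 1 = O(B^2)$, yielding the claimed $O(\delta^{-1} B^8 \eps^{-8})$ bound. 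I then define the estimator $\sigma \triangleq \sqrt{\max(0, \hat{\tau} - 1)}$.

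To conclude, I subtract $1$ from both sides of the variance bound to get $|(\hat{\tau} - 1) - \min_i \|\vec{w}_i - \vec{w}\|^2| \le \eps^2/36$, and then use the elementary inequality $|\sqrt{a} - \sqrt{b}| \le \sqrt{|a - b|}$ valid for all $a, b \ge 0$ (applied with $a = \max(0, \hat{\tau}-1)$ and $b = \min_i \|\vec{w}_i - \vec{w}\|^2$) to obtain $|\sigma - \min_i \|\vec{w}_i - \vec{w}\|| \le \eps/6$. The main subtlety here, which dictates the exponents, is precisely this square-root transfer: estimating a variance to accuracy $\eps'$ only yields a distance estimate of accuracy $\sqrt{\eps'}$ in the worst case (when $\min_i \|\vec{w}_i - \vec{w}\|$ is near zero), so I must set $\eps' = \eps^2/36$. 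Combined with the $\eps'^{-4}$ dependence of \textsc{MinVariance}, this produces the $\eps^{-8}$ scaling in the stated sample complexity.
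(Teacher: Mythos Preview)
Your proposal is correct and follows essentially the same approach as the paper: transform samples via $y' = y - \vec{w}^\top \vec{x}$, observe that $y'$ is the max of two centered Gaussians with variances $\|\vec{w}_i - \vec{w}\|^2 + 1$, and invoke \textsc{MinVariance} at precision $\Theta(\eps^2)$. Your write-up is in fact more careful than the paper's, which simply says ``applying the \textsc{MinVariance} algorithm to $X$ (with precision $\eps^2$) recovers the desired quantity''---you make explicit the subtraction of $1$, the clipping to $\max(0,\cdot)$, the square-root transfer $|\sqrt{a}-\sqrt{b}|\le \sqrt{|a-b|}$, and the origin of the $B^8$ factor from the $\sigma_{\max}^8$ term hidden in the proof (though not the statement) of Lemma~\ref{lem:minVariance}.
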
 
\begin{proof}
    Define the random variable $X = y - \vec{w}^\top \vec{x}$. Then, $X$ is the
    maximum of the two Gaussians
    $X_1 = (\vec{w_1} - \vec{w})^\top \vec{x} + \eta_i$ and 
    $X_2 = (\vec{w_2} - \vec{w})^\top \vec{x} + \eta_i$.
    In particular, $\text{Var}[X_i] = \|\vec{w_i} - \vec{w}\|^2 + 1$. Thus, 
    applying the \textsc{MinVariance} algorithm to $X$ (with precision $\eps^2$) 
    recovers the desired quantity.
\end{proof}

\paragraph{Estimating the weight vectors.} 

Note that our separability assumption (Assumption \ref{as:no_index}) implies that 
$\|\vec{w}_1 - \vec{w}_2\| \geq \Delta$.
Now, using this and Corollary \ref{cor:minVariance}, we have that 
we can find a point on the
$2$-dimensional cover that is at least $\eps/2$-close to $\vec{w}_1$
(with $\eps/6$ error coming from the subspace identification, the grid granularity,
and the estimation step).

Once we have identified $\vec{w}_1$, we can substract 
$\vec{w}_1^{\top} \vec{x}$ from all the next samples of the form $(\vec{x}, y)$
find the minimum variance among the grid points that are at least $\Delta$
far away from the estimated $\vec{w}_1$, and this way we are guaranteed to recover
the remaining $\vec{w}_2$ as well.

\section{Acknowledgements}
We thank Sitan Chen for pointing out an error in the original proof of Lemma \ref{lemma:svd_cov}.
This work is supported by NSF Awards CCF-1901292, DMS-2022448 and
DMS2134108, the DOE PhILMs project
(DE-AC05-76RL01830), 
a Simons Investigator Award, the Simons Collaboration on the Theory
of Algorithmic Fairness, a DSTA grant, an Open Philanthropy AI Fellowship and a Microsoft
Research-BAIR Open Research Commons grant. 

\printbibliography

\clearpage
\appendix

\section{Computations of Gradient and Hessian for the Known-Index Case} 
\label{app:computationsLogLikelihood}
Suppose we have a given parameter estimate for $\bm{W}^* = [\vec{w}_j^*]_{j=1}^k$ 
given by $\bm{W} = [\vec{w}_j]_{j=1}^k$. For a single sample $(\vec{x}, y, j_*)$
from the known-index self-selection model (Definition \ref{defn:index_observed}), 
the likelihood under the current parameter estimate (conditioned on a fixed
$\vec{x}$) can be written as the likelihood of observing $y$ from the $j_*$-th
model, multiplied by the probability of the $j_*$-th model being observed
conditioned on its output being $y$. In particular,
\begin{align*}
    p(\bm{W}; \vec{x}, y, j_*) 
        &= f_{\sigma}(y - \vec{w}_{j_*}^\top \vec{x})
                \cdot {\int_{C_{j_*}(y)} \prod_{j \neq j_*} f_{\sigma}(z_j - \vec{w}_j^\top \vec{x})\, d\bm{z}},
\end{align*}
where $C_{j_*}(y) \in \mathbb{R}^{k-1}$ is the (convex) set such that for any $\bm{z} \in
\mathbb{R}^k$, 
\[
   \bm{z}_{-j_*} \in C_{j_*}(y) \text{ and } z_{j_*} = y \iff S(\bm{z}) = j_*,
\]
and where $f_{\sigma}$ is the canonical probability density function of the
normal distribution with mean zero and variance $\sigma^2$. Thus, the conditional
log-likelihood for a single sample is given by
\begin{align}
    \label{eq:ll}
    \ell(\bm{W}; \vec{x}, y, j_*) = 
        \log\lr{f_{\sigma}(y - \bm{w}_{j_*}^\top \vec{x})} + 
        \log\lr{\int_{C_{j_*}(y)} \prod_{j \neq j_*} f_{\sigma}(z_j - \vec{w}_j^\top \vec{x})\, d\bm{z} }.
\end{align}
Finally, the objective function $\overline{\ell}$ that we use it the population 
version of the above with respect to $(y, i)$ and the average over the observed 
samples over $\vec{x}$.
\begin{align*}
    \overline{\ell}(\bm{W}) = 
    \frac{1}{n} \sum_{i = 1}^n 
    \mathbb{E}_{(y, j_*) \sim \mathcal{D}(\vec{x}^{(i)}; \vec{W}^*)} 
    \left[
        \log\lr{f_{\sigma}(y - \bm{w}_{j_*}^\top \vec{x}^{(i)})} + 
        \log\lr{\int_{C_i(y)} \prod_{j \neq j_*} 
        f_{\sigma}(\bm{z}_j - \vec{w}_j^\top \vec{x}^{(i)})\, d\bm{z} }
    \right],
\end{align*}
which matches \eqref{eq:objectiveDefinition}.
The most important properties of $\ell$ are:
\begin{itemize}
    \item[(i)] there is an appropriate projection set that contains the true
    parameters such that inside the set the function $\ell$ is strongly concave, 
    \item[(ii)] its maxima correspond to the true set of parameters
    $[\vec{w}_j^*]_{j=1}^k$. 
\end{itemize}

We next derive the gradient of the log-likelihood for a single sample $(\vec{x},
i, y)$. The gradient 
will be a vector of the form $[\nabla_{\vec{w_1}} \ell(\bm{W}; \vec{x}, y,
j_*); \ldots; \nabla_{\vec{w_k}} \ell(\bm{W}; \vec{x}, y, j_*)]$:  
we will handle the gradients with respect to $\vec{w}_{j_*}$ and  
$\{\vec{w}_{j}: j \neq j_*\}$
separately. Throughout this section and the next, we use the following
about the standard Gaussian density:
\begin{fact}[Obtained via direct calculation]
    For any $\vec{w}, \vec{x} \in \mathbb{R}^d$ and $z, \sigma \in \mathbb{R}$, we
    have that:
    \begin{align*}
        \nabla_{\vec{w}} f_{\sigma}(z - \bm{w}^\top \vec{x}) 
        &= \frac{z - \bm{w}^\top \vec{x}}{\sigma^2}
            f_{\sigma}(z - \bm{w}^\top \vec{x}) \cdot \vec{x},
        \text{ and} \\
        \nabla^2_{\vec{w}} f_{\sigma}(z - \bm{w}^\top \vec{x})
        &= \frac{(z - \bm{w}^\top \vec{x})^2-\sigma^2}{\sigma^4}
            f_{\sigma}(z - \bm{w}^\top \vec{x}) \cdot \vec{x} \vec{x}^\top,
    \end{align*}
    where $f_\sigma$ is the canonical PDF of a mean-zero, variance-$\sigma^2$
    Gaussian random variable.
\end{fact}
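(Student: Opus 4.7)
The plan is to obtain both identities by a direct chain-rule computation on the single-variable Gaussian density. Write $f_\sigma(u) = (\sigma\sqrt{2\pi})^{-1}\exp(-u^2/(2\sigma^2))$, and note that $f_\sigma'(u) = -(u/\sigma^2) f_\sigma(u)$ and $f_\sigma''(u) = ((u^2 - \sigma^2)/\sigma^4) f_\sigma(u)$. These are one-dimensional calculus facts that I would verify by differentiating the explicit exponential form and collecting terms.

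Next, set $u(\vec{w}) \triangleq z - \vec{w}^\top \vec{x}$, so that $\nabla_{\vec{w}} u(\vec{w}) = -\vec{x}$ and $\nabla^2_{\vec{w}} u(\vec{w}) = \vec{0}$. By the chain rule applied to the composition $\vec{w} \mapsto f_\sigma(u(\vec{w}))$,
\[
\nabla_{\vec{w}} f_\sigma(u(\vec{w})) = f_\sigma'(u(\vec{w})) \cdot \nabla_{\vec{w}} u(\vec{w}) = -\frac{u(\vec{w})}{\sigma^2} f_\sigma(u(\vec{w})) \cdot (-\vec{x}) = \frac{z - \vec{w}^\top\vec{x}}{\sigma^2} f_\sigma(z - \vec{w}^\top\vec{x}) \cdot \vec{x},
\]
which is the first claim.

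For the Hessian, I would differentiate the gradient expression once more with respect to $\vec{w}$. Using the product rule on the scalar $u(\vec{w}) \cdot f_\sigma(u(\vec{w}))$ (with $\vec{x}$ pulled out as a constant vector), the two contributions are $(\nabla_{\vec{w}} u) \cdot f_\sigma(u) = -\vec{x} f_\sigma(u)$ and $u \cdot \nabla_{\vec{w}} f_\sigma(u) = u \cdot (u/\sigma^2) f_\sigma(u) \vec{x}$. Combining and multiplying by $\vec{x}^\top/\sigma^2$ from the outer factor yields
\[
\nabla^2_{\vec{w}} f_\sigma(u(\vec{w})) = \frac{1}{\sigma^2}\left(\frac{u(\vec{w})^2}{\sigma^2} - 1\right) f_\sigma(u(\vec{w})) \cdot \vec{x}\vec{x}^\top = \frac{(z - \vec{w}^\top\vec{x})^2 - \sigma^2}{\sigma^4} f_\sigma(z - \vec{w}^\top\vec{x}) \cdot \vec{x}\vec{x}^\top,
\]
which matches the second claim. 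There is no real obstacle here: the statement is essentially a bookkeeping exercise combining one-dimensional derivatives of the Gaussian density with a linear chain rule, and the only care required is keeping track of the sign coming from $\nabla_{\vec{w}} u = -\vec{x}$ (which cancels against the $-u/\sigma^2$ in $f_\sigma'$) and observing that the Hessian of the linear form $u(\vec{w})$ vanishes so that no extra term appears in the second derivative.
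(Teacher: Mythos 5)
Your computation is correct: the one-dimensional derivatives $f_\sigma'(u) = -(u/\sigma^2)f_\sigma(u)$, $f_\sigma''(u) = ((u^2-\sigma^2)/\sigma^4)f_\sigma(u)$ combined with the chain rule for the linear map $u(\vec{w}) = z - \vec{w}^\top\vec{x}$ (whose Hessian vanishes) give exactly the stated gradient and Hessian. This is the same direct calculation the paper invokes, so there is nothing further to add.
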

\noindent We now return to deriving the gradient of the log-likelihood. First,
for the gradient with respect to $\bm{w}_{j_*}$ note that the second term of the
objective function above is independent of $\bm{w}_{j_*}$ and thus
\begin{align}
    \label{eq:ll_grad_i}
    \nabla_{\vec{w}_{j_*}} \ell(\bm{W}; \vec{x}, y, j_*) 
    &= \frac{1}{\sigma^2}(\vec{w}_{j_*}^\top \vec{x} - y)\cdot \vec{x}.
\end{align}
For the $\vec{w}_j$ terms where $j \neq j_*$, the first term in the objective
disappears and we are left with:
\begin{align}
    \label{eq:ll_grad_j}
    \nabla_{\vec{w}_j} \ell(\bm{W}; \vec{x}, y, j_*) &= 
        \frac{\int_{C_{j_*}(y)} \nabla_{\vec{w}_j} f_{\sigma}(z_j - \vec{w}_j^\top \vec{x}) \cdot 
                            \prod_{l \in [k] \setminus \{j,j_*\}} f_{\sigma}(z_l - \vec{w}_l^\top \vec{x})\, d\bm{z}_{-j_*}}
                {\int_{C_{j_*}(y)} \prod_{l \neq j_*} f_{\sigma}(z_l - \vec{w}_l^\top \vec{x})\, d\bm{z}_{-j_*}} \\
    \nonumber
    &= \mathbb{E}_{z_{-j_*} \sim \mathcal{N}((\bm{W}^\top \vec{x})_{-j_*}, \sigma \bm{I}_{k-1} )}\left[
        \frac{1}{\sigma^2}(z_j - \vec{w}_j^\top \vec{x}) \big\vert \vec{z}_{-j_*} \in C_{j_*}(y)
    \right]\cdot \vec{x}.
\end{align}

We continue with the computation of the Hessian. 
In particular, the function $\ell$ admits a Hessian $\bm{H}$ 
made of blocks $\bm{H}_{j,l}$, where 
\[ (\bm{H}_{j,l})_{ab} = \ddt{(\bm{w}_j)_a}{(\bm{w}_l)_b} \overline{\ell}(\bm{W};
\vec{x}, y, j_*). \]

From the above computations, it follows that for a single sample $(\vec{x}, y, j_*)$,
the matrix block $\bm{H}_{j,j_*} = 0$ for all $j \neq j_*$. Thus, it remains to
consider only the blocks $\bm{H}_{j_*,j_*}$ and blocks $\bm{H}_{j,l}$ for which 
$j \neq j_*$ and $l \neq j_*$.
Now, to get $\bm{H}_{j_*,j_*}$ we differentiate \eqref{eq:ll_grad_i} with
respect to $\vec{w}_{j_*}$ again which yields:
\begin{align*}
    \bm{H}_{j_*,j_*}
    = \frac{\nabla^2_{\vec{w}_{j_*}} f_{\sigma}(y - \bm{w}_{j_*}^\top \vec{x})}{
        f_{\sigma}(y - \bm{w}_{j_*}^\top \vec{x})} -
    \frac{\nabla_{\vec{w}_{j_*}} f_{\sigma}(y - \bm{w}_{j_*}^\top \vec{x}) 
    \cdot \nabla_{\vec{w}_{j_*}} f_{\sigma}(y - \bm{w}_{j_*}^\top \vec{x})^\top
    }{{f_{\sigma}(y - \bm{w}_{j_*}^\top \vec{x})}^2} = -\frac{1}{\sigma^2}\vec{x}\vec{x}^\top
\end{align*}

We now turn to the entries $\bm{H}_{j,j}$ for $j \neq j_*$:
\begin{align*}
    \bm{H}_{j,j} &= \left[\frac{\int_{C_{j_*}(y)} \frac{1}{\sigma^4} ((\vec{w}_j^\top \vec{x} - z_j)^2 - \sigma^2) f_{\sigma}(z_j - \vec{w}_j^\top \vec{x}) \cdot 
                            \prod_{l \in [k] \setminus \{i,j\}} f_{\sigma}(z_l - \vec{w}_l^\top \vec{x})\, d\bm{z}_{-j_*}}
                {\int_{C_{j_*}(y)} \prod_{l \neq j_*} f_{\sigma}(z_l - \vec{w}_l^\top \vec{x})\, d\bm{z}_{-j_*} }\right. \\
                &\qquad \left. - \lr{\frac{\int_{C_{j_*}(y)} \frac{1}{\sigma^2} (\vec{w}_j^\top \vec{x} - z_j) f_{\sigma}(z_j - \vec{w}_j^\top \vec{x}) \cdot 
            \prod_{l \in [k] \setminus \{j,j_*\}} f_{\sigma}(z_l - \vec{w}_l^\top \vec{x})\, d\bm{z}_{-j_*}}
{\int_{C_{j_*}(y)} \prod_{l \neq j_*} f_{\sigma}(z_l - \vec{w}_l^\top \vec{x})\, d\bm{z}_{-j_*} }}^2\right]\cdot \vec{x}\vec{x}^\top \\
                &= \frac{1}{\sigma^4}\lr{
                    \text{Var}_{\bm{z}_{-j_*} \sim \mathcal{N}((\bm{W}^\top \vec{x})_{-j_*},\, 
                    \sigma^2 \bm{I}_{k-1}
                    )}
                    \left[z_j|\bm{z}_{-j_*} \in C_{j_*}(y)\right] 
                    - \sigma^2}\cdot \vec{x}\vec{x}^\top
\end{align*}

\noindent Using the same procedure to find the off-diagonal terms ($\bm{H}_{jl}$
for $j \neq l$) yields 
\begin{align*}
    \bm{H}_{j,l}
        &= \frac{1}{\sigma^4} 
        \text{Cov}_{\bm{z}_{-j_*} \sim \mathcal{N}((\bm{W}^\top \vec{x})_{-j_*},\, \sigma^2 \bm{I}_{k-1})}
            \left[z_j, z_l\,|\, \bm{z}_{-j_*} \in C_{j_*}(y)\right] \cdot \vec{x}\vec{x}^\top.
\end{align*}
Thus, putting together the blocks $\bm{H}_{j,l}$ for which $j, l \neq j_*$, 
\begin{equation}
    \label{eq:hessian}
    \bm{H}_{\vec{W}_{-j_*}} = \frac{1}{\sigma^4} \lr{
        \text{Cov}_{\bm{z}_{-j_*} \sim \mathcal{N}((\bm{W}^\top \vec{x})_{-j_*},\, \sigma^2 \bm{I}_{k-1})}
            \left[z_j, z_l\,|\, \bm{z}_{-j_*} \in C_{j_*}(y)\right]
        - \sigma^2 \bm{I}
    } \otimes \vec{x}\vec{x}^\top,
\end{equation}
where $\otimes$ represents the Kronecker product.
 Thus, the complete Hessian for a single sample $(\vec{x}, i, y)$ can be
expressed as a block matrix of the form:
\[
    \bm{H} 
    = \left[\begin{matrix}
        -\frac{1}{\sigma^2} \vec{x}\vec{x}^\top & \bm{0} \in \mathbb{R}^{d \times d(k-1)} \\
        \bm{0} \in \mathbb{R}^{d(k-1) \times d} & \bm{H}_{\vec{W}_{-j_*}}
    \end{matrix}\right]
    \preceq \left[\begin{matrix}
        -\frac{1}{\sigma^2} \vec{x}\vec{x}^\top & \bm{0} \\
        \bm{0} & \bm{0}
    \end{matrix}\right].
\]
Our minimum-probability assumption (Assumption \ref{asp:known:1})
together with thickness thus implies that $\bm{H}_{pop} \preceq -\frac{(\alpha /
k)}{\sigma} \bm{I}$, where $\bm{H}_{pop}$ is the Hessian of the population
log-likelihood.

\section{Missing Proofs from Section \ref{sec:known}} \label{app:known}

  In this section we present the missing proof of the lemmas for the known-setting
estimation that we presented in Section \ref{sec:known}.

\subsection{Proof of Lemma \ref{lem:known:stationary}} \label{app:known:stationary}

\newcommand{\latent}{\tilde{Y}}
Recall that we have $j_* \in [k]$ the observed index, $y_j$ for all $j \in [k]$ the 
(unobserved) samples from the self-selection model, and $y = y_{j_*}$ the observed response 
variable from the model. 
Fixing a single example $\vec{x}$, we consider the population log-likelihood:
\begin{align*}
    \nabla_{\vec{w}_j} \bar{\ell}(\vec{W}^*;\vec{x}) 
        &= \Exp_{(y,j_*)} \left[ 
            \frac{1}{\sigma^2}\left(
            \bm{1}_{j = j_*} \cdot y 
            + \bm{1}_{j \neq j_*} \cdot 
            \mathbb{E}_{\vec{z}_{-j_*} \sim \mathcal{N}((\vec{W^*}^\top x^{(i)})_{-j_*}, \sigma^2 \bm{I}_{k-1})}\left[z_j|\bm{z}_{-j_*} \in C_{j_*}(y)\right]
            - \vec{w}_j^{*\top} \vec{x}
            \right) 
        \right] \cdot \vec{x} \\
        &= \Exp_{(y_1,\ldots, y_k)} \left[ 
            \frac{1}{\sigma^2}\left(
            \bm{1}_{j = j_*} \cdot y 
            + \bm{1}_{j \neq j_*} \cdot 
            \mathbb{E}_{\vec{z}_{-j_*} \sim \mathcal{N}((\vec{W^*}^\top x^{(i)})_{-j_*}, \sigma^2 \bm{I}_{k-1})}\left[z_j|\bm{z}_{-j_*} \in C_{j_*}(y)\right]
            - \vec{w}_j^{*\top} \vec{x}
            \right) 
        \right] \cdot \vec{x} \\
        &= 
        -\Exp_{(y_1, \ldots, y_k)} \left[ 
            \frac{1}{\sigma^2}\lr{\vec{w}_j^{*\top} \vec{x} - y_j} 
            + \bm{1}_{j \neq j_*} \frac{1}{\sigma^2}\lr{y_j 
            - \Exp_{\vec{z}_{-j_*}}\left[z_j|z_{-j_*} \in C_{j_*}(y)\right]
            }
        \right] \cdot \vec{x},
\end{align*}
where in the last two expectations $y$ and $j_*$ are deterministic functions of
the sampled latent variables $y_1, \ldots, y_k$. 
By definition, $\Exp[\vec{\bm{w}_j}^{*\top} \vec{x} - y_j] = 0$, so
\begin{align*}
    -\nabla_{\vec{w_j}} \bar{\ell}(\vec{W}^*;\vec{x}) 
        &= 
        \frac{1}{\sigma^2}
        \Exp \left[ 
            \bm{1}_{j \neq j_*}  \lr{y_j 
            - \Exp_{\vec{z}_{-j_*}}\left[z_j|\bm{z}_{-j_*} \in C_{j_*}(y)\right]
            }
        \right] \cdot \vec{x} \\ 
        &= \frac{1}{\sigma^2}
        \sum_{l \neq j}
        \Exp \left[ 
            \bm{1}_{l = j_*}\lr{y_j 
            - \Exp_{\vec{z}_{-j_*}}\left[z_j|\bm{z}_{-j_*} \in C_{j_*}(y)\right]
            }
        \right] \cdot \vec{x} \\
        &= \frac{1}{\sigma^2}
        \sum_{l \neq j}
        \Exp_{(y_l, j_*)} \left[ 
        \Exp_{\bm{y}_{-l}} \left[ 
            \bm{1}_{l = j_*}\lr{y_j 
            - \Exp_{\vec{z}_{-j_*}}\left[z_j|\bm{z}_{-j_*} \in C_{j_*}(y)\right]
            }
        \bigg\vert y_l, j_* \right]\right] \cdot \vec{x} \\ 
        &= \frac{1}{\sigma^2}
        \sum_{l \neq j}
        \Exp_{(i, y_l)} \left[ 
            \bm{1}_{l = j_*} \lr{
        \Exp_{y_{-l}} \left[ 
            y_j | y_{l}, j_*\right]
            - \Exp_{\vec{z}_{-j_*}}\left[z_j|\bm{z}_{-j_*} \in C_{j_*}(y)\right]
            }
        \right] \cdot \vec{x}.
\end{align*}
Observing that for the true parameters $\bm{W}^*$, the two inner expectations
above are equal conditioned on $l = j_*$ concludes the proof.

\subsection{Proof of Lemma \ref{lem:samp_trunc}} \label{app:lem:samp_trunc}

Our first lemma establishes that suitably ``wide'' convex sets contain 
a ball of non-trivial radius. For the rest of this proof we use 
$d = k - 1$ for simplicity and we also drop the bold letters for vectors and 
matrices. 

\begin{lemma}
    \label{lem:wide_convex}
    Let $K \subset \R^d$ be a bounded convex set satisfying for some $\alpha > 0$:
    \begin{equation*}
        \forall \norm{v} = 1: \sup_{x,y \in K} v^\top (x - y) \geq \alpha.
    \end{equation*}
    Then, there exists $x^* \in K$ such that:
    \begin{equation*}
        \blr{x: \norm{x - x^*} \leq \frac{\alpha}{256 d^2} } \subset K.
    \end{equation*}
\end{lemma}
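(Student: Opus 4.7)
The plan is to invoke John's theorem on the bounded convex body $K$ and then translate the width hypothesis into a spectral lower bound on the resulting inscribed ellipsoid. First, note that the width hypothesis forces $K$ to be full-dimensional and thus to have non-empty interior, so John's theorem applies and produces an ellipsoid
\[
  E \;=\; \{x^* + Au : u \in B_d\} \;\subset\; K,
\]
with $x^* \in K$ and $A$ a symmetric positive-definite $d \times d$ matrix, satisfying the outer containment $K \subset x^* + d \cdot A \cdot B_d$, where $B_d$ denotes the closed Euclidean unit ball in $\mathbb{R}^d$.

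Next, I would read off the width of the outer ellipsoid in an arbitrary unit direction $v$: a direct computation gives $\sup_{x,y \in x^* + d A B_d} v^\top(x-y) = 2 d \|A v\|$. Combining this with the hypothesis $\sup_{x,y\in K} v^\top(x-y) \geq \alpha$ and the inclusion $K \subset x^* + d A B_d$ forces $\|A v\| \geq \alpha/(2d)$ for every unit $v$. Equivalently, the smallest singular value of $A$ is at least $\alpha/(2d)$, so $E$ contains the ball $\{x : \|x - x^*\| \leq \alpha/(2d)\}$ around its center $x^*$. Since $E \subset K$ and $\alpha/(2d) \geq \alpha/(256 d^2)$ for all $d \geq 1$, the point $x^*$ witnesses the desired inclusion.

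Using John's theorem as a black box, there is no real obstacle; the proof is essentially one outer-containment computation plus the definition of the ellipsoid's semi-axes. If one prefers a self-contained elementary argument, the natural alternative is a greedy simplex construction: pick any $p_0 \in K$ and inductively choose $p_i$ by applying the width hypothesis in a direction $v_i$ orthogonal to $\mathrm{aff}(p_0,\ldots,p_{i-1})$ to obtain a point with $|v_i^\top(p_i - p_0)| \geq \alpha/2$. The resulting simplex $S = \mathrm{conv}(p_0,\ldots,p_d) \subset K$ has volume at least $(\alpha/2)^d/d!$, and its incenter is a candidate for $x^*$ with inradius governed by $r = 1/\sum_j (1/h_j)$, where $h_j$ is the height from vertex $p_j$ to its opposite face. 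The main technical hurdle in that alternative is lower-bounding the simplex heights $h_j$, since the greedy procedure only directly controls heights above the partial affine hulls, not above the opposite faces; recovering an $\alpha/(256 d^2)$ inradius then requires either a re-centering step or an upper bound on the simplex's face areas in terms of the width, which is where the elementary approach becomes delicate.
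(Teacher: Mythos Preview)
Your John-ellipsoid argument is correct and in fact yields the sharper radius $\alpha/(2d)$, comfortably stronger than the $\alpha/(256d^2)$ claimed. The only minor gap is that John's theorem is usually stated for \emph{closed} convex bodies; since the hypothesis does not assume $K$ closed, you should apply John to $\bar{K}$, note that the open ball of radius $\alpha/(2d)$ about $x^*$ lies in $\mathrm{int}(\bar{K}) = \mathrm{int}(K) \subset K$, and then use the strict inequality $\alpha/(256d^2) < \alpha/(2d)$ to get the closed ball inside $K$. This is a one-line fix.

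The paper takes a genuinely different and more elementary route: it sets $x^*$ to be the centroid of $K$ (the mean of the uniform distribution on $K$) and argues directly, via a slice-volume analysis, that the centroid is at distance at least $\alpha/(128d^2)$ from every supporting hyperplane. Concretely, for each unit direction $v$ with width $\eta \geq \alpha$, the paper shows that the one-dimensional marginal of the uniform measure along $v$ has enough mass on an interval of length $\Theta(\eta/d)$ (Claim~\ref{clm:anti_conc_proj} in the appendix), which forces the projected centroid to sit at least $\eta/(128d^2)$ away from each endpoint. Your approach is cleaner and gives the tighter $1/d$ dependence, at the price of invoking John's theorem as a black box; the paper's argument is fully self-contained but pays an extra factor of $d$ and some absolute constants. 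Both are perfectly valid for the lemma as stated, since the downstream application only needs a polynomial-in-$d$ inscribed radius.
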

\begin{proof}
    We first probe the lemma in the setting where $K$ is closed. Now, let $\mu$ be the uniform distribution over $K$ and $x^*$ be the mean of $\mu$. Consider any $x \in \R^d \setminus K$. Since, $K$ is a compact convex set, there exists $v \in \mb{R}^d, \gamma \in \R$ such that $\norm{v} = 1$ and $v^\top x > \gamma$ and $v^\top y < \gamma$ for all $y \in K$ by the separating hyperplane theorem. Note that:
    \begin{equation}
        \label{eq:dir_bound}
        \norm{x^* - x} \geq v^\top (x - x^*) \geq \max_{y \in K} v^\top (y - x^*).
    \end{equation}
    Let $m = \min_{y \in K} v^\top y$ and $M = \max_{y \in K} v^\top y$ and for any $\beta \in [m, M]$:
    \begin{equation*}
        K_{\beta} \coloneqq \blr{x: v^\top x = \beta} \cap K,\ V (\beta) = \mathrm{Vol}_{d - 1} \lr{K_{\beta}} \text{ and } \beta^* = \argmax_{\beta} V(\beta).
    \end{equation*}
    That is, $V(\beta)$ denotes the $(d - 1)$-dimensional volume of the $(d - 1)$-dimensional slice of $K$ with the hyperplane $\{x: v^\top x = \beta\}$. Note that $V(\beta^*) \neq 0$ as then $\mathrm{Vol} (K) = 0$. Defining, $\eta \coloneqq M - m$, we now prove the following claim.
    \begin{claim}
        \label{clm:anti_conc_proj}
        We have:
        \begin{equation*}
        \exists \beta_1, \beta_2 \in [m, M]: \beta_1 \leq \beta_2 - \frac{\eta}{4d} \text{ and } \forall \beta \in [\beta_1, \beta_2], V(\beta) \geq \frac{V(\beta^*)}{2}.
    \end{equation*}
    \end{claim}
    \begin{proof}
        We start by breaking into two cases:
        \begin{itemize}
            \item[] \textbf{Case 1: } $\abs{\beta^* - m} \leq \abs{\beta^* - M}$ and
            \item[] \textbf{Case 2: } $\abs{\beta^* - M} \leq \abs{\beta^* - m}$.
        \end{itemize}
        For the first case, let $y \in K$ be such that $v^\top y = M$ and define for any $\rho \in [0, 1]$, the set 
        \begin{equation*}
            \forall \rho \in [0, 1]: L_\rho \coloneqq \{y + \rho (z - y): z \in K_{\beta^*}\}
        \end{equation*}
        Note that convexity, $L_\rho \subset K$ for all $\rho \in [0, 1]$. Observing $\mathrm{Vol}_\rho (L_\rho) = \rho^d \mathrm{Vol} (K_{\beta^*})$, we get:
        \begin{equation*}
            \forall \rho \in \sqlr{1 - \frac{1}{2d}, 1}: \mathrm{Vol}_{d - 1} (L_\rho) \geq \frac{\mathrm{Vol}_{d - 1} (L_1)}{2}.
        \end{equation*}
        Furthermore, note that $\mathrm{Vol}_{d - 1}(V_1) = V(\beta^*)$ and that $L_\rho \subset K_{M - (\beta^* - M) \rho}$ for all $\rho \in [0, 1]$. Noting that $M - \beta^* \geq \eta / 2$ concludes the proof of the claim in this case. The alternative case is similar.
    \end{proof}
    Now, we have:
    \begin{align*}
        v^\top x^* &= \frac{\int_{m}^M  \beta V(\beta) d\beta}{\int_m^M V(\beta) d\beta} \leq M - \frac{\int_{m}^M  (M - \beta) V(\beta) d\beta}{\int_m^M V(\beta) d\beta} \leq M - \frac{\int_{m}^M  (M - \beta) V(\beta) d\beta}{\eta V(\beta^*)} \\
        &\leq M - \frac{\int_{\beta_1}^{\beta_2}  (M - \beta) V(\beta) d\beta}{\eta V(\beta^*)} \leq M - \frac{\int_{\beta_1}^{\beta_1 + \eta / 8d}  \eta V(\beta) d\beta}{8d \eta V(\beta^*)} \\
        &\leq M - \frac{\eta}{8d} \cdot \frac{\eta V(\beta^*)}{2} \cdot \frac{1}{8d\eta V(\beta^*)} \leq M - \frac{\eta}{128 d^2}.
    \end{align*}
    Similarly, we have:
    \begin{equation*}
        v^\top x^* \geq m - \frac{\eta}{128 d^2}
    \end{equation*}
    which establishes the lemma by noting that $\eta \geq \alpha$ and using Equation~\ref{eq:dir_bound}. The general result follows by considering the closure of $K$ and choosing the radius to be $\eta / (256 d^2)$.
\end{proof}

We now prove that one can sample from an arbitrarily centered truncated gaussian
distribution as long as the truncation set has large mass under the standard
gaussian measure where we crucially utilize an analysis of the projected
Langevin sampling algorithm by Bubeck, Eldan and Lehec \cite{bubecksampling}. In
what follows, $\mc{N} ((c, I) \mid K)$ will denote the truncated gaussian
density restricted to $K$; i.e, the density function may be written as:
\begin{equation*}
    f_{w, K} (x) = 
    \begin{cases}
        \frac{\exp \blr{- \frac{\norm{x - c}^2}{2}}}{\int_K \exp \blr{- \frac{\norm{y - c}^2}{2}} dy} & \text{if } x \in K\\
        0 & \text{if } x \notin K
    \end{cases}
\end{equation*}

\begin{proof}[Proof of Lemma \ref{lem:samp_trunc}.]
    First, let $g \thicksim \mc{N} (0, I)$. We have by the concentration of Lipschitz functions of gaussians:
    \begin{equation*}
        \P \blr{\norm{g} \geq \sqrt{d} + \sqrt{2 \log 1 / \delta}} \leq \delta
    \end{equation*}
    Setting, $\delta = \alpha / 2$, we get:
    \begin{equation*}
        \P \blr{g \in \underbrace{\mathbb{B} (0, \overbrace{\sqrt{d} + \sqrt{2 \log 2 / \alpha}}^{R_1}) \cap K}_{\wt{K}}} \geq \frac{\alpha}{2}.
    \end{equation*}
    For $\wt{K}$, we must have as the pdf of a standard gaussian is lower bounded by $1 / \sqrt{2\pi}$:
    \begin{equation*}
        \forall \norm{v} = 1: \max_{x, y \in \wt{K}} v^\top (x - y) \geq \frac{\alpha}{2}.
    \end{equation*}
    Therefore, we get from Lemma~\ref{lem:wide_convex};
    \begin{equation}
        \label{eq:int_wtk}
        \exists c \in \wt{K}: \mathbb{B} \lr{c, \underbrace{\frac{\alpha}{512 d^2}}_{\beta}} \subset \wt{K} \text{ and } \norm{c} \leq \sqrt{d} + \sqrt{2 \log 2 / \alpha}.
    \end{equation}
    
    For a gaussian random variable centered at $w$, $h \thicksim \mc{N} (w, I)$, we have:
    \begin{align*}
        \P \blr{h \in \wt{K}} &= \int_{\wt{K}} \frac{1}{(\sqrt{2 \pi})^d} \exp\blr{- \frac{\norm{x - w}^2}{2}} dx \\
        &= \int_{\wt{K}} \frac{1}{(\sqrt{2 \pi})^d} \exp\blr{- \frac{\norm{x}^2 - 2 x^\top w + \norm{w}^2}{2}} dx \\
        &\geq \exp \blr{- \lr{\frac{\norm{w}^2 + 2 \norm{w} R_1}{2}}} \cdot \int_{\wt{K}} \frac{1}{(\sqrt{2 \pi})^d} \exp \blr{- \frac{\norm{x}^2}{2}} dx \\
        &\geq \underbrace{\frac{\alpha}{2} \exp \blr{- \lr{\frac{\norm{w}^2 + 2 \norm{w} R_1}{2}}}}_{\gamma} \addtocounter{equation}{1} \tag{\theequation} \label{eq:g_wtk}.
    \end{align*}
    
    Similarly to $g$, we have by the triangle inequality:
    \begin{equation*}
        \P \blr{\norm{h} \geq \sqrt{d} + \norm{w} + \sqrt{2 \log 1 / \delta}} \leq \delta
    \end{equation*}
    and consequently, by setting $\delta = \gamma (\epsilon / 16)$, we get:
    \begin{equation*}
        \P \blr{h \notin \mb{B} (0, \underbrace{\sqrt{d} + \norm{w} + \sqrt{2 \log (16 / (\gamma\epsilon))}}_{R_2})} \leq \frac{\gamma \epsilon}{10}.
    \end{equation*}
    Along with Equation~\ref{eq:g_wtk}, we get:
    \begin{equation*}
        \frac{\P \blr{h \notin \overbrace{\mb{B} (0, R_2) \cap K}^{\hat{K}}}}{\P \blr{h \in K}} \leq \frac{\epsilon}{16}.
    \end{equation*}
    Note that since $R_2 > R_1$, Equation~\ref{eq:int_wtk} applies to $\hat{K}$ as well. 
    
    Let $X$ and $Y$ be random variables distributed according to $N ((w, I) \mid \hat{K})$ and $N ((w, I) \mid K)$ respectively. Then, we have:
    \begin{equation*}
        TV (X, Y) = \P (Y \notin \hat{K}) + (\P(X \in \hat{K}) - \P (Y \in \hat{K})) = 2 \P (Y \notin \hat{K}) \leq \frac{\epsilon}{8}.
    \end{equation*}
    
    Therefore, it suffices to generate a sample $\hat{X}$ with distribution close to $X$ in TV distance. Consider the rescaled random variable $\bar{X} = \frac{\hat{X}}{\gamma}$ and the rescaled set $\bar{K} = \frac{\hat{K}}{\gamma}$. Letting $\mu$ denote the distribution function of $\bar{X}$, we have after rescaling:
    \begin{gather*}
        \mb{B} \lr{\frac{c}{\gamma}, 1} \subset \bar{K} \\
        \frac{d\mu}{dx} = \frac{1}{Z} \exp \blr{- f(x)} \mb{1} \blr{x \in \bar{K}} \text{ where } Z = \int_{\bar{K}} \exp \blr{- f(x)} dx \text{ and } f(x) = \frac{\norm{\gamma x - w}^2}{2}.
    \end{gather*}
    We have:
    \begin{gather*}
        \forall x \in \bar{K} \norm{\nabla f(x)} = \max \norm{\gamma (\gamma x - w)} \leq \gamma \cdot (R_2 + \norm{w}) \\
        \bar{K} \subset \mb{B} \lr{0, \frac{R_2}{\gamma}} \\
        \forall x,y \in \bar{K}: \norm{\nabla f(x) - \nabla f(y)} \leq \gamma^2 \norm{x - y}.
    \end{gather*}
    Applying \cite[Theorem 1]{bubecksampling} and rescaling, we can generate in time $\mathrm{poly} (d, \norm{w}, 1 / \epsilon, 1 / \alpha)$, a random vector $\hat{X}$ satisfying by the triangle inequality:
    \begin{equation*}
        \mathrm{TV} \lr{\hat{X}, X} \leq \frac{\epsilon}{8} \implies \mathrm{TV} \lr{\hat{X}, Y} \leq \frac{\epsilon}{4}.
    \end{equation*}
    This concludes the proof of Lemma \ref{lem:samp_trunc}.
\end{proof}

\subsection{Proof of Lemma \ref{lem:samplingGradient}} \label{app:lem:samplingGradient}

  From the description of the algorithm in Section \ref{sec:Langevin} and from 
the fact that $n \ge 2 T^2/\zeta$ we have that after $T$ calls of the estimation 
Algorithm \ref{alg:langevin} there are no collisions with respect to the sampled 
index $a$ with probability at least $1 - \zeta$. So our method has failure 
probability $\zeta$ and for the rest of the proof we focus on the event that 
there are no collisions. 
\smallskip

  From the discussion in Section \ref{sec:Langevin} and from the form of 
$\vec{g}^{p}$ we have that
\begin{align*}
    \norm{\Exp \left[\vec{g}^{p} \mid \vec{W}^{(p - 1)}, \vec{g}^{(p - 1)} \right]  -  \nabla \bar{\ell}(\vec{W}^{(p)})}_2 \le C \cdot \norm{\vec{z}^{(m)}  - \Exp_{\vec{z} \sim \mathcal{F}_{i, a}(\vec{W^{(p)}}, y)}\left[\vec{z}\right]}_2
\end{align*}
where $C$ is the upper bound on the norm of the vector of covariates 
$\vec{x}^{a}$ for all $a \in [n]$. Using Lemma \ref{lem:samp_trunc} we have that
there exists a distribution $\hat{\mathcal{F}}$ such that 
$\mathrm{TV}(\hat{\mathcal{F}}, \mathcal{F}_{i, a}(\vec{W}^{(p)}, y)) \le \omega$ and
\begin{align*}
    \norm{\Exp \left[\vec{g}^{p} \mid \vec{W}^{(p - 1)}, \vec{g}^{(p - 1)} \right]  -  \nabla \bar{\ell}(\vec{W}^{(p)})}_2 & \le C \cdot \norm{\Exp_{\vec{z} \sim \hat{\mathcal{F}}}\left[\vec{z}\right] - \Exp_{\vec{z} \sim \mathcal{F}_{i, a}(\vec{W^{(p)}}, y)}\left[\vec{z}\right]}_2 \\
    & \le C \cdot R \cdot \omega.
\end{align*}
where $R$ is $\mathrm{poly} (k, B, 1 / \epsilon, 1 / \alpha, \sigma^2, 1/\sigma^2)$.
The last inequality comes from the proof of Lemma \ref{lem:samp_trunc} where
we have shown that $\hat{\mathcal{F}}$ actually has support inside the ball 
$\mathcal{B}(R)$ and the fact that the expected value of 
$\mathcal{F}_{i, a}(\vec{W}^{(p)}, y)$ is at most some 
$\mathrm{poly}(k, B, 1/a)$. The latter follows from Lemma 6 of 
\cite{daskalakis2018efficient} combined with Lemma 2 of 
\cite{daskalakis2019computationally}. Therefore, we can make $\omega$ small 
enough so that $1/\omega$ is at least 
$\mathrm{poly} (k, B, 1 / \epsilon, 1 / \alpha, \sigma^2, 1/\sigma^2)$ by setting
$m$ larger than 
$\mathrm{poly} (k, B, 1 / \epsilon, 1 / \alpha, \sigma^2, 1/\sigma^2)$ and the 
lemma follows.

\subsection{Proof of Lemma \ref{lemma:sgd}}
\label{app:bias_sgd_proof}
\newcommand{\xt}{\vec{x}_t} 
\newcommand{\xtp}{\vec{x}_{t+1}} 
\newcommand{\ztp}{\vec{z}_{t+1}} 
\newcommand{\xopt}{\vec{x}_{*}} 
\newcommand{\inner}[2]{\langle {#1}, {#2} \rangle}

  To prove this lemma we adapt the proof of Theorem 14.11 from 
\citep{shalev2014understanding}. Consider the PSGD algorithm defined by the
following update:
\begin{align*}
    z_{t+1} = x_t - \eta_t \cdot \vec{g}_t \qquad \text{ and } \qquad x_{t+1} = \Pi(z_{t+1}),
\end{align*}
where $\vec{g}_t$ is a {\em biased} estimate of the gradient such that
$\Exp[\vec{g}_t|\xt] - \nabla_{\vec{x}} f(\xt) \coloneqq \vec{b}_t$.
For convenience, let $\nabla_t = \nabla_{\vec{x}} f(\xt)$. By strong convexity, we have that 
\begin{align}
    \label{eq:bias_sgd_1}
    (\xt - \xopt)^\top \nabla_t &\geq f(\xt) - f(\xopt) + \frac{\lambda}{2}\|\xt - \xopt\|^2
\end{align}
Furthermore, since $\xtp$ is the projection of $\ztp$ onto a convex set
containing $\xopt$, we must have $\|\xtp - \xopt\| \leq \|\ztp - \xopt\|$.
Thus, 
\begin{align*}
   \|\xt - \xopt\|^2 - \|\xtp - \xopt\|^2 \geq 
   \|\xt - \xopt\|^2 - \|\ztp - \xopt\|^2 = 2\eta_t (\xt - \xopt)^\top \vec{g}_t - \eta_t^2 \|\vec{g}_t\|^2.
\end{align*}
Taking the expectation of both sides and rearranging,
\begin{align}
    \nonumber
   \Exp\lr{\|\xt - \xopt\|^2 - \|\xtp - \xopt\|^2} &\geq 2\eta_t (\xt - \xopt)^\top \lr{\nabla_t + \vec{b}_t} - \eta_t^2 \rho^2 \\
    \label{eq:bias_sgd_2}
   \frac{\Exp\lr{\|\xt - \xopt\|^2 - \|\xtp - \xopt\|^2}}{2\eta_t} + \frac{\eta_t \rho^2}{2} &\geq (\xt - \xopt)^\top \lr{\nabla_t + \vec{b}_t}
\end{align}
Combining \eqref{eq:bias_sgd_1} and \eqref{eq:bias_sgd_2} and summing over $t
\in [T]$ yields:
\begin{align*}
    \sum_{t=1}^T \Exp[f(\xt)] - f(\xopt) \leq 
    \sum_{t=1}^T \Exp\lr{\lr{
   \frac{\|\xt - \xopt\|^2 - \|\xtp - \xopt\|^2}{2\eta_t} + \frac{\eta_t \rho^2}{2} 
   - (\xt - \xopt)^\top \vec{b}_t - \frac{\lambda}{2}\|\xt - \xopt\|^2
    }}.
\end{align*}
Setting $\eta_t = \frac{1}{t \lambda}$ causes the first and last terms above to
collapse to $-T\lambda \|\vec{x}_T - \xopt\|^2 \leq 0$, so
\begin{align*}
    \sum_{t=1}^T \Exp[f(\xt)] - f(\xopt) \leq 
    \sum_{t=1}^T \lr{ \frac{\rho^2}{2\lambda t} - (\xt - \xopt)^\top \vec{b}_t
    }
    \leq \frac{\rho^2}{2\lambda}(1 + \log(T)) + R\sum_{t=1}^T\|b_t\|.
\end{align*}
Thus, ensuring $\|b_t\|^2 \leq \frac{\rho^2}{2\lambda Rt}$, dividing by $T$ and
applying Jensen's inequality yields
\begin{align*}
    \Exp[f(\bar{\vec{x}})] - f(\xopt) \leq \frac{\rho^2}{\lambda T}(1+\log(T)).
\end{align*}

\section{Missing Proofs from Section \ref{sec:unknown}}
\subsection{Proof of Lemma \ref{lem:svd_conc}}
\label{app:proof:svd_conc}
In this section, we provide the proof for Lemma \ref{lem:svd_conc}, showing that the
empirical second moment matrix constructed in Section
\ref{ssec:identifying_subspace} concentrates around its expectation.
Our proof will make use of the following technical Lemma from \citep{yi2016solving}:
\begin{lemma}[Lemma 13 of \citep{yi2016solving}]
    \label{lemma:their_moment_bounds}
    Let $X \sim \mathcal{N}(0, \bm{I}_d)$ and for each $i \in [k]$, let $Z_i
    \sim \mathcal{N}(0, 1)$ be independent Gaussian random variables;
    $V_i \in \mathbb{R}^d$ be a fixed unit vector; and $Y_i = a_i X^\top V_i +
    b_i Z$. 
    For $\tau_1, \tau_2 \geq 1$, define the events $\mathcal{E}_i = \{|X^\top V_i| \leq
    \tau_1, |Z_i| \leq \tau_2\}$.
    Then, we have that for all $i \in [k]$,
   \begin{align*}
        \left\|
            \mathbb{E}[Y_i^2 \cdot XX^\top \big\vert \mathcal{E}_i]
        \right\|_2 &\lesssim a_i^2 + b_i^2 \\
        \left\| 
            \mathbb{E}[Y_i^2 \cdot XX^\top \big\vert \mathcal{E}_i^c] \cdot \mathbb{P}(\mathcal{E}_i^c)
        \right\|_2 &\lesssim
            (a_i^2 + b_i^2)\lr{
                \tau_1^3 \cdot e^{-\tau_1^2/2} 
                + \tau_1\tau_2 \cdot e^{-(\tau_1^2 + \tau_2^2)/2}
                + \tau_2 \cdot e^{-\tau_2^2/2}
            }.
   \end{align*}
\end{lemma}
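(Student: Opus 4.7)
The plan is to exploit the rotational symmetry of the Gaussian $X$ around the direction $V_i$. Decompose $X = \alpha V_i + X_\perp$ where $\alpha \coloneqq X^\top V_i \sim \mathcal{N}(0,1)$ and $X_\perp$ is the projection onto $V_i^\perp$, which is a standard Gaussian on a $(d-1)$-dimensional space and is independent of $\alpha$. Then $XX^\top = \alpha^2 V_i V_i^\top + \alpha(V_i X_\perp^\top + X_\perp V_i^\top) + X_\perp X_\perp^\top$, while $Y_i = a_i \alpha + b_i Z_i$ depends only on $(\alpha, Z_i)$. Since both conditioning events $\mathcal{E}_i$ and $\mathcal{E}_i^c$ are measurable with respect to $(\alpha, Z_i)$, and $X_\perp$ is independent of these with $\mathbb{E}[X_\perp] = 0$ and $\mathbb{E}[X_\perp X_\perp^\top] = I - V_i V_i^\top$, the two cross terms involving $X_\perp$ linearly integrate to zero. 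Hence, both conditional expectations reduce to a sum of two block-diagonal pieces along $V_i V_i^\top$ and $I - V_i V_i^\top$ whose operator norms are each controlled by a scalar Gaussian integral in $(\alpha, Z_i)$.

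For the first bound, under $\mathcal{E}_i$ the variables $\alpha$ and $Z_i$ are independent truncated Gaussians with truncation radii $\tau_1, \tau_2 \geq 1$, so their truncated moments are bounded (up to a factor of $1/\mathbb{P}(\mathcal{E}_i) \leq 2$) by the untruncated Gaussian moments. Expanding $Y_i^2 = a_i^2 \alpha^2 + 2 a_i b_i \alpha Z_i + b_i^2 Z_i^2$ and combining with the decomposition above yields
\[
\mathbb{E}[Y_i^2 XX^\top \mid \mathcal{E}_i] = \mathbb{E}[Y_i^2 \alpha^2 \mid \mathcal{E}_i] \cdot V_i V_i^\top + \mathbb{E}[Y_i^2 \mid \mathcal{E}_i] \cdot (I - V_i V_i^\top),
\]
and both coefficients are $O(a_i^2 + b_i^2)$ by the truncated-moment bound. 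Taking operator norm establishes $\|\mathbb{E}[Y_i^2 XX^\top \mid \mathcal{E}_i]\|_2 \lesssim a_i^2 + b_i^2$.

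For the second bound, I would work with the unnormalized quantity $\mathbb{E}[Y_i^2 XX^\top \mathbb{1}_{\mathcal{E}_i^c}]$ and use $\mathbb{1}_{\mathcal{E}_i^c} \leq \mathbb{1}_{|\alpha| > \tau_1} + \mathbb{1}_{|Z_i| > \tau_2}$. The same decomposition of $XX^\top$ reduces the task to bounding the operator norms
\[
\bigl\| \mathbb{E}[Y_i^2 \alpha^2 \mathbb{1}] V_i V_i^\top + \mathbb{E}[Y_i^2 \mathbb{1}] (I - V_i V_i^\top) \bigr\|_2 = \max\bigl( \mathbb{E}[Y_i^2 \alpha^2 \mathbb{1}],\ \mathbb{E}[Y_i^2 \mathbb{1}] \bigr)
\]
for the two truncation indicators. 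Using the standard Gaussian tail moment estimate $\mathbb{E}[\alpha^{2k} \mathbb{1}_{|\alpha|>\tau}] \lesssim \tau^{2k-1} e^{-\tau^2/2}$ and the independence of $\alpha$ and $Z_i$, the $\mathbb{1}_{|\alpha|>\tau_1}$ contribution gives a bound $\lesssim (a_i^2 + b_i^2)\, \tau_1^3 e^{-\tau_1^2/2}$ (the dominant piece being $a_i^2 \mathbb{E}[\alpha^4 \mathbb{1}]$), and symmetrically the $\mathbb{1}_{|Z_i|>\tau_2}$ contribution yields $\lesssim (a_i^2 + b_i^2)\, \tau_2 e^{-\tau_2^2/2}$. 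To recover the stated mixed term $\tau_1 \tau_2\, e^{-(\tau_1^2+\tau_2^2)/2}$, one keeps track of the joint event $\{|\alpha|>\tau_1,|Z_i|>\tau_2\}$ via inclusion-exclusion and uses the independence of $\alpha$ and $Z_i$ to factor that integral.

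The approach has no conceptual obstacle — it is entirely a collection of explicit one-dimensional Gaussian tail computations after exploiting orthogonal decomposition. The one thing to watch is bookkeeping: keeping the $V_i V_i^\top$ and $(I - V_i V_i^\top)$ pieces straight, and ensuring that powers of $\tau_1$ and $\tau_2$ do not multiply when they shouldn't (the regime $\tau_1, \tau_2 \geq 1$ avoids polynomial prefactor blowups that could arise if the truncation radii were close to zero).
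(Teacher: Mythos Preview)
Your proposal is correct and in fact supplies a self-contained argument, whereas the paper does not prove this lemma at all: it simply cites Lemma~13 of \citep{yi2016solving} (noting a small typo in that work). So there is nothing to compare your method against here beyond the citation.

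A couple of minor remarks on your write-up. First, your bound via $\mathbb{1}_{\mathcal{E}_i^c} \le \mathbb{1}_{|\alpha|>\tau_1} + \mathbb{1}_{|Z_i|>\tau_2}$ already yields
\[
(a_i^2+b_i^2)\bigl(\tau_1^3 e^{-\tau_1^2/2} + \tau_2\, e^{-\tau_2^2/2}\bigr),
\]
which is \emph{stronger} than the stated bound; the mixed term $\tau_1\tau_2\, e^{-(\tau_1^2+\tau_2^2)/2}$ is dominated by either of the other two, so there is no need to go through inclusion--exclusion to recover it separately. Second, in the first bound your factor $1/\mathbb{P}(\mathcal{E}_i) \le 2$ follows because $\tau_1,\tau_2 \ge 1$ forces $\mathbb{P}(\mathcal{E}_i) \ge (1-2e^{-1/2}/\sqrt{2\pi})^2 > 1/2$, which is worth stating explicitly. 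Otherwise the argument is clean and the orthogonal-decomposition reduction to scalar Gaussian tail moments is exactly the right tool.
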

\begin{proof}
    Both statements are shown within the proof of Lemma 13 in
    \citep{yi2016solving}.
    Note that the second statement is Equation (63) in \citep{yi2016solving} but
    with a small correction---in particular, in the last display on page 35 of
    that work, the $\exp(-\tau_2^2/2)$ term is dropped between the last and
    second-to-last lines.
\end{proof}
We will also use the following standard sub-Gaussian concentration inequality:
\begin{lemma}[Theorem 5.39 of \citep{vershynin2010introduction}]
    \label{lem:subgaussian_conc}
    Suppose $\vec{x}_1, \vec{x}_2, \ldots \vec{x}_n \in \mathbb{R}^d$ are
    i.i.d. subgaussian random vectors with with Orlicz norm
    $\|\vec{x}_i\|_{\psi_2} \leq K$. Then, there exist constants $C_1, C_2$ such
    that, for every $t \in (0, K^2)$ and $n \geq C_1(K^2/t)^2d$
    \[
        \mathbb{P}\lr{\left\|
            \frac{1}{n} \sum_{i \in [n]} \vec{x}_i\vec{x}_i^\top
            - \mathbb{E}[\vec{x}_1\vec{x}_1^\top]
        \right\|_2 \geq t} \leq 
        \exp\lr{-C_2 nt^2 / K^4}.
    \]
\end{lemma}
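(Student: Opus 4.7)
The plan is to reduce the operator norm deviation to a pointwise deviation on a net of the unit sphere, bound each pointwise deviation via a Bernstein-type inequality for sub-exponential random variables, and then union bound. The target matrix is symmetric, so for any fixed realization one has $\|M\|_2 = \sup_{\vec{v}\in S^{d-1}} |\vec{v}^\top M \vec{v}|$.

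First I would pick a $\tfrac14$-net $\mathcal{N}$ of $S^{d-1}$ in Euclidean distance; by a standard volumetric argument $|\mathcal{N}|\le 9^d$. A direct approximation lemma (e.g.\ using $\|M\|_2 = \sup_v|v^\top Mv|$ and bounding the error of replacing $v$ by its nearest net point) gives
\[
\left\|\frac{1}{n}\sum_{i=1}^n \vec{x}_i\vec{x}_i^\top - \mathbb{E}[\vec{x}_1\vec{x}_1^\top]\right\|_2 \;\le\; 2\,\max_{\vec{v}\in\mathcal{N}}\left|\frac{1}{n}\sum_{i=1}^n \langle \vec{x}_i,\vec{v}\rangle^2 - \mathbb{E}\langle \vec{x}_1,\vec{v}\rangle^2\right|.
\]
This reduces the problem to controlling, for each fixed $\vec{v}\in\mathcal{N}$, a one-dimensional empirical mean.

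Next I would use the sub-Gaussian hypothesis: for each $\vec{v}\in S^{d-1}$, the scalar $\langle \vec{x}_i,\vec{v}\rangle$ is sub-Gaussian with $\psi_2$-norm at most $K$. Consequently $Z_i(\vec{v}) \coloneqq \langle \vec{x}_i,\vec{v}\rangle^2$ is sub-exponential with $\psi_1$-norm bounded by a constant multiple of $K^2$ (using the general fact $\|XY\|_{\psi_1}\le \|X\|_{\psi_2}\|Y\|_{\psi_2}$). Then I would apply Bernstein's inequality for centered sub-exponential sums to obtain, for every $t\in(0,K^2)$,
\[
\mathbb{P}\left(\left|\frac{1}{n}\sum_{i=1}^n \bigl(Z_i(\vec{v}) - \mathbb{E} Z_i(\vec{v})\bigr)\right| \ge \frac{t}{2}\right) \le 2\exp\!\left(-c\, n\min\!\left\{\frac{t^2}{K^4},\,\frac{t}{K^2}\right\}\right),
\]
for an absolute constant $c>0$. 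On the regime $t\le K^2$, the minimum equals $t^2/K^4$, giving the desired rate $\exp(-c\,n t^2/K^4)$.

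Finally I would take a union bound over the at most $9^d$ net points, producing the bound
\[
\mathbb{P}\!\left(\left\|\tfrac{1}{n}\sum_i \vec{x}_i\vec{x}_i^\top - \mathbb{E}[\vec{x}_1\vec{x}_1^\top]\right\|_2 \ge t\right) \le 9^d\cdot 2\exp(-c\,nt^2/K^4).
\]
Choosing the sample size $n \ge C_1 (K^2/t)^2 d$ with $C_1$ large enough relative to $c$ and $\ln 9$ absorbs the $9^d = \exp(d\ln 9)$ factor into half of the exponent, leaving the claimed tail $\exp(-C_2 n t^2/K^4)$. The main (and only real) obstacle is bookkeeping: choosing the net resolution and the constants $C_1, C_2$ consistently so that the Bernstein exponent survives the union bound on the net in the sub-Gaussian regime $t\le K^2$, and invoking the sub-exponential product bound to translate from $\|\cdot\|_{\psi_2}$ control of $\langle \vec{x}_i,\vec{v}\rangle$ to $\|\cdot\|_{\psi_1}$ control of $\langle \vec{x}_i,\vec{v}\rangle^2$.
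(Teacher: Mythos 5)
Your proposal is correct: the paper does not prove this lemma but simply cites it as Theorem 5.39 of Vershynin's survey, and the standard proof of that result is exactly the argument you give — a $\tfrac14$-net reduction of the operator norm, sub-exponential ($\psi_1$) control of $\langle \vec{x}_i,\vec{v}\rangle^2$ via the $\psi_2$ bound, Bernstein's inequality in the regime $t\le K^2$, and a union bound over the $9^d$ net points absorbed by the assumption $n \ge C_1(K^2/t)^2 d$. The constant bookkeeping you describe goes through as stated, so nothing is missing.
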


With this result in hand, we can prove a similar result for the random variable
$Y = \max_{i \in [k]} Y_i$: 
\begin{lemma}
    \label{lemma:cond_mean_dev}
    Assume the setting of Lemma \ref{lemma:their_moment_bounds} with $a
    \coloneqq \max_i a_i$ and $b \coloneqq \max_i b_i$.
    Define $Y = \max_{i \in [k]} Y_i$ and $\tilde{Y} = \max(0, \tilde{Y})$, 
    and let
    $\mathcal{E} = \cap_{i=1}^k \mathcal{E}_i$. Then, 
    \begin{align*}
        \left\|\mathbb{E}[\tilde{Y}^2 \cdot XX^\top \big\vert \mathcal{E}] - 
        \mathbb{E}[\tilde{Y}^2 \cdot XX^\top] \right\|_2
        &\lesssim k \cdot (a^2 + b^2)(\tau_1^3 + \tau_2^2)\lr{
            e^{-\tau_1^2/2} + e^{-\tau_2^2/2}
        }.
    \end{align*}
\end{lemma}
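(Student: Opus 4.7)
The plan is to express the difference through the law of total expectation and reduce everything to quantities controlled by Lemma~\ref{lemma:their_moment_bounds} together with elementary Gaussian tail estimates. Starting from
\[
  \mathbb{E}[\tilde{Y}^2 XX^\top \mid \mathcal{E}] - \mathbb{E}[\tilde{Y}^2 XX^\top] = \frac{1}{\mathbb{P}(\mathcal{E})}\lr{\mathbb{P}(\mathcal{E}^c)\, \mathbb{E}[\tilde{Y}^2 XX^\top] - \mathbb{E}[\tilde{Y}^2 XX^\top \bm{1}_{\mathcal{E}^c}]},
\]
a union bound on $\mathcal{E}^c = \bigcup_i \mathcal{E}_i^c$ yields $\mathbb{P}(\mathcal{E}^c) \lesssim k(e^{-\tau_1^2/2} + e^{-\tau_2^2/2})$, so that for $\tau_1,\tau_2$ large enough we have $\mathbb{P}(\mathcal{E}) \ge 1/2$ and it suffices to bound the spectral norm of each piece in the numerator separately.

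For the first piece, since $\tilde{Y} = \max(0, \max_i Y_i)$ always satisfies $\tilde{Y}^2 \le \sum_i Y_i^2$, the unconditional bound from Lemma~\ref{lemma:their_moment_bounds} gives $\norm{\mathbb{E}[\tilde{Y}^2 XX^\top]}_2 \lesssim k(a^2+b^2)$; multiplied by $\mathbb{P}(\mathcal{E}^c)$ this already fits inside the target bound up to a polynomial-in-$k$ prefactor.

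The core of the argument is bounding $\norm{\mathbb{E}[\tilde{Y}^2 XX^\top \bm{1}_{\mathcal{E}^c}]}_2$. I would again upper bound $\tilde{Y}^2 \le \sum_i Y_i^2$ and, for each $i$, decompose $\bm{1}_{\mathcal{E}^c} = \bm{1}_{\mathcal{E}_i^c} + \bm{1}_{\mathcal{E}_i}\bm{1}_{\mathcal{E}^c}$. The first summand produces $\mathbb{E}[Y_i^2 XX^\top \bm{1}_{\mathcal{E}_i^c}]$, which is handled verbatim by Lemma~\ref{lemma:their_moment_bounds} and contributes the $\tau_1^3 e^{-\tau_1^2/2} + \tau_1 \tau_2 e^{-(\tau_1^2+\tau_2^2)/2} + \tau_2 e^{-\tau_2^2/2}$ part of the final bound. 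For the second summand, the definition of $\mathcal{E}_i$ yields the deterministic bound $Y_i^2 \le 2(a_i^2 \tau_1^2 + b_i^2 \tau_2^2)$, so what remains is to control $\norm{\mathbb{E}[XX^\top \bm{1}_{\mathcal{E}^c}]}_2$. For this I would apply $\bm{1}_{\mathcal{E}^c} \le \sum_j (\bm{1}_{|V_j^\top X|>\tau_1} + \bm{1}_{|Z_j|>\tau_2})$: the $Z_j$ term is independent of $X$ and contributes $e^{-\tau_2^2/2}\cdot I$, while for the $V_j^\top X$ term one decomposes $X = V_j (V_j^\top X) + (I - V_j V_j^\top) X$ into two independent Gaussian pieces and invokes $\mathbb{E}[g^2 \bm{1}_{|g|>\tau_1}] \lesssim \tau_1 e^{-\tau_1^2/2}$ for a standard Gaussian $g$, giving $\norm{\mathbb{E}[XX^\top \bm{1}_{\mathcal{E}_j^c}]}_2 \lesssim \tau_1 e^{-\tau_1^2/2} + e^{-\tau_2^2/2}$.

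The main obstacle will be organizing the bookkeeping so that the off-diagonal contributions produced by this coupling---most notably terms of the form $a^2 \tau_1^2 \cdot e^{-\tau_2^2/2}$ and $b^2 \tau_2^2 \cdot \tau_1 e^{-\tau_1^2/2}$ arising from pairing the bound $Y_i^2 \le 2(a_i^2\tau_1^2 + b_i^2\tau_2^2)$ on $\mathcal{E}_i$ with the bound on $\mathbb{E}[XX^\top \bm{1}_{\mathcal{E}_j^c}]$---recombine with the diagonal contributions into the claimed shape $(a^2+b^2)(\tau_1^3 + \tau_2^2)(e^{-\tau_1^2/2} + e^{-\tau_2^2/2})$; here the usefulness of the product form is that every cross term lies within it. Summing over $i \in [k]$ and dividing by $\mathbb{P}(\mathcal{E}) \ge 1/2$ finally yields the stated spectral-norm bound.
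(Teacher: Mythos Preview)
Your proposal is correct and follows the same overall strategy as the paper: pass to a total-expectation decomposition, peel off the $\mathcal{E}^c$ contribution, and reduce everything to the per-index bounds of Lemma~\ref{lemma:their_moment_bounds} together with Gaussian tail estimates. The one genuine difference is in how the troublesome piece $\mathbb{E}[\tilde{Y}^2 XX^\top \bm{1}_{\mathcal{E}^c}]$ is split. The paper introduces the threshold event $\widetilde{\mathcal{E}} = \mathcal{E}^c \cap \{|Y| \ge a\tau_1 + b\tau_2\}$: on $\widetilde{\mathcal{E}}$ the maximizing index automatically lies in $\mathcal{E}_{i^*}^c$, so Lemma~\ref{lemma:their_moment_bounds} applies directly to that index; on the complement $|Y|$ is deterministically bounded and one only needs to control $\|\mathbb{E}[XX^\top \bm{1}_{\mathcal{E}^c}]\|_2$. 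You instead first dominate $\tilde{Y}^2 \le \sum_i Y_i^2$ and, for each $i$, use the per-index split $\bm{1}_{\mathcal{E}^c} = \bm{1}_{\mathcal{E}_i^c} + \bm{1}_{\mathcal{E}_i}\bm{1}_{\mathcal{E}^c}$. Your route avoids singling out the maximizer and is a bit more mechanical; the paper's threshold trick is slightly slicker in that it never needs the full sum $\sum_i Y_i^2$ on the large-$|Y|$ part. Both routes land on the same bound for $\|\mathbb{E}[XX^\top \bm{1}_{\mathcal{E}^c}]\|_2$ (your explicit computation via $X = V_j(V_j^\top X) + (I - V_jV_j^\top)X$ is in fact more careful than the paper's one-line estimate), and both end up with the same shape of final bound, including the mild slack in the $k$-dependence you flag.
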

\begin{proof}
    First, observe that 
    \[
        \mathbb{E}[\tilde{Y}^2 \cdot XX^T] = 
    \mathbb{E}[\tilde{Y}^2 \cdot XX^T|\mathcal{E}]\cdot \mathbb{P}(\mathcal{E}) 
    + \mathbb{E}[\tilde{Y}^2 \cdot XX^T|\mathcal{E}^c] \cdot \mathbb{P}(\mathcal{E}^c).
    \]
    Subtracting $\mathbb{E}[Y^2 \cdot XX^T|\mathcal{E}]$ from each side,
    applying the triangle inequality, and noting that $\tilde{Y}^2 \leq Y^2$,
    \begin{align*}
        \left\|\mathbb{E}[\tilde{Y}^2 \cdot XX^\top \big\vert \mathcal{E}] - 
        \mathbb{E}[\tilde{Y}^2 \cdot XX^\top] \right\|_2
        &\leq \left\|\mathbb{E}[Y^2 \cdot XX^\top \big\vert \mathcal{E}]\right\|_2 
        \cdot \mathbb{P}(\mathcal{E}^c) +
        \left\|\mathbb{E}[Y^2 \cdot XX^\top \big\vert \mathcal{E}^c]
        \cdot \mathbb{P}(\mathcal{E}^c)\right\|_2
    \end{align*}
    We bound each term individually; for the first, we can use the definition of
    the event $\mathcal{E}$ directly, together with the first part of Lemma
    \ref{lemma:their_moment_bounds}, since $\mathcal{E}$ implies each
    $\mathcal{E}_i$:
    \begin{align*}
        \left\|\mathbb{E}[Y^2 \cdot XX^\top \,\vert\, \mathcal{E}]\right\|_2 \leq 
        \sum_{i=1}^k \left\|\mathbb{E}[Y_i^2 \cdot XX^\top \,\vert\, \mathcal{E}]\right\|_2 \lesssim k \cdot (a^2 + b^2).
    \end{align*}
    For the second, we define $\widetilde{\mathcal{E}} = \mathcal{E}^c \cap
    \{|Y| \geq a\tau_1 + b\tau_2\}$, so that
    \begin{align*}
        \left\|\mathbb{E}[Y^2 \cdot XX^\top \,\vert\, \mathcal{E}^c]
        \cdot \mathbb{P}(\mathcal{E}^c)\right\|_2
        &= \left\|\mathbb{E}\left[
            Y^2 \cdot XX^\top \cdot \bm{1}_{\mathcal{E}^c}
        \right] \right\|_2 \\
        &\leq \underbrace{\left\|\mathbb{E}\left[
            Y^2 \cdot XX^\top \cdot \bm{1}_{\widetilde{\mathcal{E}}}
        \right] \right\|_2}_{\delta_1} +
        \underbrace{\left\|\mathbb{E}\left[
            Y^2 \cdot XX^\top \cdot \bm{1}_{\mathcal{E}^c \setminus \widetilde{\mathcal{E}}}
        \right] \right\|_2}_{\delta_2}.
    \end{align*}
    Now, $\widetilde{\mathcal{E}}$ implies that $Y_i \geq a\tau_1 + b\tau_2$ for
   at least one $i \in [k]$, so $\bm{1}_{\widetilde{\mathcal{E}}} \leq \max_{i \in [k]}
   \bm{1}_{|Y_i| \geq a\tau_1 + b\tau_2}$. Thus, we can simplify the first term
   above to 
    \begin{align*}
        \delta_1 \coloneqq \left\|\mathbb{E}\left[
            Y^2 \cdot XX^\top \cdot \bm{1}_{\widetilde{\mathcal{E}}}
        \right] \right\|_2
        &\leq \left\|\mathbb{E}\left[
            \lr{\max_{i \in [k]} Y_i^2} \cdot XX^\top \cdot \lr{\max_{i \in [k]} \bm{1}_{|Y_i| \geq a\tau_1 + b\tau_2}}
        \right] \right\|_2.
    \end{align*}
    By construction, any maximizer of $\max_{i \in [k]} Y_i^2$ will also be a
    maximizer of $\max_{i \in [k]} \bm{1}_{|Y_i| \geq a\tau_1 + b\tau_2}$, so
    \begin{align*}
        \delta_1 &\leq \left\|\mathbb{E}\left[
            \lr{\max_{i \in [k]} Y_i^2 \cdot \bm{1}_{|Y_i| \geq a\tau_1 + b\tau_2}} \cdot XX^\top
        \right] \right\|_2 
        \leq \sum_{i=1}^k \left\|\mathbb{E}\left[
            Y_i^2 \cdot \bm{1}_{|Y_i| \geq a\tau_1 + b\tau_2} \cdot XX^\top
        \right] \right\|_2.
    \end{align*}
    Since $|Y_i| \leq a|X^\top V_i| + b|Z_i|$, the event $|Y_i| \geq a\tau_1 +
    b\tau_2$ implies either $|X^\top V_i| \geq \tau_1$ or $|Z_i| \geq \tau_2$,
    thus implying the event $\mathcal{E}_i^c$. Using the second part of Lemma
    \ref{lemma:their_moment_bounds} yields: 
    \begin{align*}
        \delta_1 
        &\leq \sum_{i=1}^k \left\|\mathbb{E}\left[
            Y_i^2 \cdot \bm{1}_{\mathcal{E}_i^c} \cdot XX^\top
        \right] \right\|_2 \\
        &\leq \sum_{i=1}^k (a_i^2 + b_i^2)\lr{
            \tau_1^3 \cdot e^{-\tau_1^2/2} 
            + \tau_1\tau_2 \cdot e^{-(\tau_1^2 + \tau_2^2)/2}
            + \tau_2 \cdot e^{-\tau_2^2/2}
        } \\
        &\lesssim k \cdot (a^2 + b^2)(\tau_1^3 + \tau_1\tau_2)(e^{-\tau_1^2/2} + e^{-\tau_2^2/2})
    \end{align*}
    Bounding $\delta_2$ is more straightforward, since we have an upper bound on
    $|Y|$, and $\mathcal{E}^c \setminus \widetilde{\mathcal{E}}$ implies $\{X\}$
    \begin{align*}
        \left\|\mathbb{E}\left[
            Y^2 \cdot XX^\top \cdot \bm{1}_{\mathcal{E}^c \setminus \widetilde{\mathcal{E}}_i}
        \right] \right\|_2 \leq (a\tau_1 + b\tau_2)^2
        \left\|\mathbb{E}\left[
           XX^\top \cdot \bm{1}_{\mathcal{E}^c}
        \right] \right\|_2 
        &\leq (a\tau_1 + b\tau_2)^2 \cdot \mathbb{P}(\mathcal{E}^c) \\
        &\lesssim (a^2 + b^2)(\tau_1^2 + \tau_2^2) \cdot \mathbb{P}(\mathcal{E}^c).
    \end{align*}
    Putting together the previous bounds and noting that
    $\mathbb{P}(\mathcal{E}^c) \lesssim k \cdot (e^{-\tau_1^2/2} + e^{-\tau_2^2/2})$ yields
    that: 
    \begin{align*}
        \left\|\mathbb{E}[Y^2 \cdot XX^\top \big\vert \mathcal{E}] - 
        \mathbb{E}[Y^2 \cdot XX^\top] \right\|_2
        &\lesssim k \cdot (a^2 + b^2)(\tau_1^3 + \tau_2^2)\lr{
            e^{-\tau_1^2/2} + e^{-\tau_2^2/2}
        }.
    \end{align*}
\end{proof}

\noindent We can now use Lemma \ref{lemma:cond_mean_dev} to establish the desired result:
\svdconc*
\begin{proof}
    For the sake of simplicity, we assume that our weight vectors $\vec{w_i}$
    have norm bounded by one (below, setting $a = B$ and $\tau_1$ by $\tau_1 /
    B$ where $B$ is a norm bound on the weights recovers at most a polynomial
    dependence on $B$).
    We first translate our setting to that of Lemma
    \ref{lemma:cond_mean_dev} by setting $V_i =
    \vec{w_i}$, $a_i = 1$, and $b_i = 1$. We
    then define $X, Y, \tilde{Y}$, and $\mathcal{E}$ as in Lemma
    \ref{lemma:cond_mean_dev}. 
    First, note that 
    \begin{align*}
        \left\|\widehat{\bm{M}} - \mathbb{E}[\widehat{\bm{M}}]\right\|_2 
        &\leq 
        \overbrace{\left\|
        \frac{1}{n}\sum_{l=1}^n \max(0, y^{(l)})^2 \cdot \vec{x}^{(l)} {\vec{x}^{(l)}}^\top
        - \mathbb{E}\left[
            \tilde{Y}^2 \cdot XX^T
            \big\vert 
            \mathcal{E}
        \right]
        \right\|_2}^{\delta_1}  \\
        &\qquad + \overbrace{\left\|
        \mathbb{E}\left[ \tilde{Y}^2 \cdot XX^T \big\vert \mathcal{E} \right]
        - \mathbb{E}\left[ \tilde{Y}^2 \cdot XX^T\right]
        \right\|_2}^{\delta_2}.
    \end{align*}
    We begin by bounding the second term using Lemma \ref{lemma:cond_mean_dev}
    with $(a, b) = (1, 1)$
    (to adapt the proof to $B > 1$, we set $a = B$ and $\tau_1 = \tau_1/B$):
    \begin{align*}
        \delta_2 \lesssim kB^2 \lr{\tau_1^3 + \tau_2^2} \lr{e^{-\tau_1^2/2} + e^{-\tau_2^2/2}}.
    \end{align*}
    To bound the first term, we introduce the random variables $X_+ = X |
    \mathcal{E}$ and $\tilde{Y}_+ = \tilde{Y} | \mathcal{E}$ and let
    $\vec{x}^{(l)}_+$ and $y^{(l)}_+$ be samples from the corresponding
    distributions. Let $\mathcal{E}_n$ represent the event 
    \begin{align*}
    \mathcal{E}_n = \bigcap_{i \in [k]} \bigcap_{l \in [n]} \left\{
        \abs*{\vec{w_i}^\top \vec{x}^{(l)}} \leq \tau_1,\ 
        |Z_i| \leq \tau_2
    \right\}
    \end{align*}
    We can then decompose the first term as
    \begin{align*}
        \mathbb{P}(\delta_1 \geq t) 
        &\leq \mathbb{P}\lr{\left\|
        \frac{1}{n}\sum_{l=1}^n \max(0, y^{(l)}_+)^2 \cdot \vec{x}^{(l)}_+ {\vec{x}^{(l)}_+}^\top
        - \mathbb{E}\left[
            \tilde{Y}^2 \cdot XX^T
            \big\vert 
            \mathcal{E}
        \right]
        \right\|_2 \geq t} + \mathbb{P}\lr{\mathcal{E}^c_n}.
    \end{align*}
    By definition of $X_+$ and $Y_+$, we have that $\max(0, y^{(l)}_+) \cdot
    \vec{x}^{(l)}_+$ is a sub-Gaussian random vector with Orlicz norm $O(\tau_1
    + \tau_2)$. By Lemma \ref{lem:subgaussian_conc}, for any $t \in (0, (\tau_1 +
    \tau_2)^2)$ and for $n \geq C_1 ((\tau_1+\tau_2)^2/t)^2d$,
    \begin{align*}
        \mathbb{P}\lr{\left\|
        \frac{1}{n}\sum_{l=1}^n \max(0, y^{(l)}_+)^2 \cdot \vec{x}^{(l)}_+ {\vec{x}^{(l)}_+}^\top
        - \mathbb{E}\left[
            \tilde{Y}^2 \cdot XX^T
            \big\vert 
            \mathcal{E}
        \right]
        \right\|_2 \geq t} \leq \exp\lr{-C_2 n t^2 / (\tau_1 + \tau_2)^4}
    \end{align*}
    A union bound over $i \in [k]$ and $l \in [n]$ along with a Gaussian tail
    bound yields that 
    \begin{align*}
        \mathbb{P}(\delta_1 \geq t) \leq \exp\lr{-C_2 n t^2 / (\tau_1 + \tau_2)^4} + kn\lr{\exp(-\tau_1^2/2) + \exp(-\tau_2^2/2)}.
    \end{align*}
    Now, we proceed identically to the proof of (28) in
    \citep{yi2016solving}: set $\tau_1 = \tau_2 = C\sqrt{\log(kn)}$ for
    large enough $C$
    and $t = \Theta(\frac{\log(kn)}{\sqrt{n}}
    \sqrt{\max(\log(\frac{2}{\delta}), d)}$, so that $\delta_2 \lesssim
    \frac{1}{n}$ and $\mathbb{P}(\delta_1 \geq t) \leq \delta$, completing the
    proof.
\end{proof}

\end{document}